\documentclass[review,onefignum,onetabnum]{siamart171218}

\textwidth = 6.2 in
\textheight = 8.5 in
\oddsidemargin = 0.0 in
\evensidemargin = 0.0 in
\topmargin = 0.0 in
\headheight = 0.0 in
\headsep = 0.3 in
\parskip = 0.05 in
\parindent = 0.3 in

\usepackage{enumerate}
\usepackage{amsmath}
\usepackage{mathrsfs}
\usepackage{xcolor}	
\usepackage{amssymb}
\usepackage{bm}
\usepackage{graphicx}
\usepackage{subcaption}
\usepackage{adjustbox}
\usepackage{soul}
\usepackage{listings}
\usepackage{array}  

\usepackage{amssymb,latexsym}
\usepackage{multirow}

\usepackage{color,epstopdf}
\usepackage{enumitem}
\usepackage{setspace}
\usepackage{url}
\usepackage{scalerel}
\usepackage[ruled,vlined,algo2e,linesnumbered]{algorithm2e}

\usepackage{footnote}
\makesavenoteenv{algorithm}
%%%%Geoff Color Commands
\usepackage{mdframed}
\newmdenv[linecolor=green!50!black, fontcolor=green!50!black, backgroundcolor=green!20, linewidth=2pt, roundcorner=10pt]{gnote}

%%Color notes by James
\definecolor{amber}{rgb}{1.0, 0.49, 0.0}
\definecolor{darkpastelgreen}{rgb}{0.01, 0.75, 0.24}
\definecolor{darkred}{rgb}{0.64, 0.0, 0.0}
\definecolor{amberhl}{rgb}{1.0, 0.75, 0.0}

%%Color notes by Aly

\newmdenv[linecolor=blue!50!black, fontcolor=blue!50!black, backgroundcolor=blue!20, linewidth=2pt, roundcorner=10pt]{anote}

%Define Theorem Types
%\newtheorem{theorem}{Theorem}[section]
%\newtheorem{corollary}{Corollary}[theorem]

%\newtheorem{lemma}[theorem]{Lemma}
%\newtheorem{definition}[theorem]{Definition}

%\DeclareCaptionFormat{listing}
%                     {\colorbox{gray}
%                       {\parbox{\dimexpr\textwidth-2\fboxsep}{\centering #1#2#3}}}
                     
%% math-mode version of "l" column type
\newcolumntype{L}{>{$}l<{$}}

%\newcommand{\cL}{\mathcal{L}}

% Sets running headers as well as PDF title and authors
\headers{NPASA - Motivation and Global Convergence}{J. Diffenderfer, W. W. Hager}

\title{NPASA: An algorithm for nonlinear programming - Motivation and Global Convergence\thanks{Submitted to the editors \textcolor{red}{ADD DATE}.\funding{The authors gratefully acknowledge support by the National Science Foundation under Grant 1819002, and by the Office of Naval Research under Grants N00014-15-1-2048 and N00014-18-1-2100. This work was performed under the auspices of the U.S. Department of Energy by Lawrence Livermore National Laboratory  under  Contract  DE-AC52-07NA27344, LLNL-JRNL-824568-DRAFT.}}}
\author{James Diffenderfer\thanks{Lawrence Livermore National Laboratory, Livermore, CA (\email{diffenderfer2@llnl.gov})}
  \and William W. Hager\thanks{The University of Florida,
    Gainesville, FL (\email{hager@ufl.edu}, \url{http://people.clas.ufl.edu/hager/})}}

\ifpdf
\hypersetup{
  pdftitle={NPASA: An algorithm for nonlinear programming - Motivation and Global Convergence},
  pdfauthor={J. Diffenderfer, W. W. Hager}
}
\fi

%DIF PREAMBLE EXTENSION ADDED BY LATEXDIFF
%DIF UNDERLINE PREAMBLE %DIF PREAMBLE
\RequirePackage[normalem]{ulem} %DIF PREAMBLE
\RequirePackage{color}\definecolor{RED}{rgb}{1,0,0}\definecolor{BLUE}{rgb}{0,0,1} %DIF PREAMBLE
 %DIF PREAMBLE
                      %DIF PREAMBLE
%DIF SAFE PREAMBLE %DIF PREAMBLE
 %DIF PREAMBLE
 %DIF PREAMBLE
 %DIF PREAMBLE
 %DIF PREAMBLE
%DIF FLOATSAFE PREAMBLE %DIF PREAMBLE
 %DIF PREAMBLE
 %DIF PREAMBLE
 %DIF PREAMBLE
 %DIF PREAMBLE
 %DIF PREAMBLE
 %DIF PREAMBLE
%DIF END PREAMBLE EXTENSION ADDED BY LATEXDIFF

\begin{document}

\maketitle

% REQUIRED
\begin{abstract}
In this paper, we present a two phase method for solving nonlinear programming problems called Nonlinear Polyhedral Active Set Algorithm (NPASA) that has global and local convergence guarantees under reasonable assumptions. The first phase consists of an augmented Lagrangian method to ensure global convergence while the second phase is designed to promote fast local convergence by performing a balanced reduction of two error estimators for nonlinear programs. After presenting error estimators for nonlinear programs and our algorithm NPASA, we establish global convergence properties for NPASA. Local quadratic convergence of NPASA is established in a companion paper \cite{diffenderfer2020local}.
\end{abstract}

% REQUIRED
\begin{keywords}
 nonlinear programming, global convergence, local convergence
\end{keywords}

% REQUIRED
\begin{AMS}
 90C30, 65K05 %(90C30: Operations research, mathematical programming; Nonlinear programming. 65K05: Numerical Analysis; Mathematical programming methods)
\end{AMS}

% Body
\section {Introduction}
\label{section:introduction}
Some of the most successful approaches for solving nonlinear programs make use of augmented Lagrangian, sequential quadratic programming (SQP), or interior-point methods \cite{NandW}. Early progress towards the implementation of a general purpose algorithm for solving nonlinear programs was made for nonlinear programs with linear constraints in the late 1970s \cite{Murtagh1978} in the form of the software MINOS. Some theoretical support for using the method of multipliers to solve nonlinear programs was established in \cite{Polak1980} and the first significant general purpose algorithm for large-scale nonlinear programs with nonlinear constraints was published in 1991 \cite{Conn1991}. This algorithm, implemented under the name LANCELOT \cite{Conn1996}, made use of augmented Lagrangian techniques and its success encouraged the continued study \cite{Birgin2005, Conn2010, Izmailov2012, Lewis2002} and use \cite{Andreani2008} of augmented Lagrangian techniques for solving nonlinear programs. Some notable implementations of a SQP approach for solving problem nonlinear programs are DNOPT \cite{GilSWdnopt}, FILTERSQP \cite{Fletcher1998}, KNITRO \cite{Byrd2006}, and SNOPT \cite{Gill2005, snopt77}. As a note on the usefulness of augmented Lagrangian methods, the SQP implementations SNOPT and DNOPT make use of an augmented Lagrangian technique to ensure global convergence. Originally introduced in the 1960s \cite{Wilson1963}, early refinements to the theory of SQP techniques in the 1970s \cite{Han1976, Palomares1976, Powell1978} provided better performance over augmented Lagrangian techniques by attaining superlinear convergence rates under certain assumptions. Another approach that has been successful in practice is interior-point methods. Originally referred to as barrier methods, this approach was first introduced in the 1950s \cite{Frisch1955}. A recent implementation of interior point techniques, called IPOPT \cite{Wachter2006}, achieves global convergence under few assumptions \cite{Wachter2005G} and it also has a local superlinear convergence rate under some assumptions after it was shown to not experience the Maratos effect \cite{Wachter2005L}. %Additionally, IPOPT has been successful in practice. 
Benchmarking results comparing several of the listed implementations can be found at \cite{NLPBenchmark}. %seems that KNITRO is best with IPOPT second

In this paper, we present a method for solving a general nonlinearly constrained optimization problem based on the recently developed \cite{HagerActive} polyhedral active set algorithm (PASA), an efficient algorithm for optimizing a nonlinear function over a polyhedron. Let us consider the nonlinear programming problem
\begin{align}
\begin{array}{cc}
\displaystyle \min_{\bm{x} \in \mathbb{R}^{n}} & f(\bm{x}) \\
\text{s.t.} & \bm{h}(\bm{x}) = \bm{0}, \ \bm{r} (\bm{x}) \leq \bm{0}
\end{array} \label{prob:main-nlp}
\end{align}
where $f : \mathbb{R}^n \to \mathbb{R}$ and $\bm{h} : \mathbb{R}^n \to \mathbb{R}^{\ell}$ are nonlinear functions and $\bm{r} : \mathbb{R}^n \to \mathbb{R}^m$ is a linear function defined by
\begin{align}
\bm{r} (\bm{x}) := \bm{A} \bm{x} - \bm{b}, \label{def:r(x)}
\end{align}
where $\bm{A} \in \mathbb{R}^{m \times n}$ and $\bm{b} \in \mathbb{R}^m$. For simplicity, we also denote the polyhedral constraint set for problem (\ref{prob:main-nlp}) by $\Omega = \{ \bm{x} \in \mathbb{R}^n: \bm{r} (\bm{x}) \leq \bm{0} \}$. Any nonlinear programming problem can be written in this form. It is assumed that $f$ and $\bm{h}$ are at least continuously differentiable. The algorithm for solving the nonlinearly constrained optimization problem (\ref{prob:main-nlp}) is based on the following steps consisting of polyhedral constrained subproblems:
\begin{enumerate}
\item An augmented Lagrangian step subject to the polyhedral constraints. This step is used to ensure convergence to a stationary point even with a poor starting guess.
\item A Newton iteration applied to the nonlinear constraints. This step amounts to a projection onto a polyhedral set.
\item A quadratic program for estimating the multipliers.
\item The optimization of an augmented Lagrangian over the tangent space associated with the nonlinear constraints.
\end{enumerate} 
In this way, the general nonlinearly constrained optimization problem is reduced to the solution of polyhedral constrained subproblems. Under suitable assumptions, the solution of subproblems 2 -- 4 is locally quadratically convergent. If the starting guess is good enough, the algorithm simply loops over the final three subproblems until the convergence tolerance is satisfied. Hence, our algorithm for solving the nonlinearly constrained optimization problems uses PASA to solve a series of polyhedral constrained optimization problems and is denoted NPASA. This paper establishes a global convergence result for NPASA, while the companion paper \cite{diffenderfer2020local} establishes a local quadratic convergence result.

The remainder of the paper is organized as follows. In Section~\ref{sec:Error-Estimators}, we discuss the error estimators used by NPASA for solving problem (\ref{prob:main-nlp}). In Section~\ref{section:npasa}, we provide pseudocode for the NPASA algorithm together with a discussion providing some motivation behind each step in the algorithm. Sections~\ref{section:npasa-phase-one} and \ref{section:npasa-phase-two} focus on establishing convergence results for the various subproblems solved within NPASA. We state and prove the main global convergence result for NPASA, Theorem~\ref{thm:npasa-global-conv}, in Section~\ref{section:npasa-global-convergence}. We now specify some notation used throughout the paper.

\subsection{Notation} \label{subsec:notation}
We will write $\mathbb{R}_+$ to denote the set $\{ x \in \mathbb{R} : x \geq 0 \}$. Scalar variables are denoted by italicized characters, such as $x$, while vector variables are denoted by boldface characters, such as $\bm{x}$. For a vector $\bm{x}$ we write $x_j$ to denote the $j$th component of $\bm{x}$ and we write $\bm{x}^{\intercal}$ to denote the transpose of $\bm{x}$. Outer iterations of NPASA will be enumerated by boldface characters with subscript $k$, such as $\bm{x}_k$, and we will write $x_{kj}$ to denote the $j$th component of the iterate $\bm{x}_k$. Inner iterations of NPASA will be enumerated by boldface characters with subscript $i$. If $\mathcal{S}$ is a subset of indices of $\bm{x}$ then we write $\bm{x}_{\mathcal{S}}$ to denote the subvector of $\bm{x}$ with indices $\mathcal{S}$. Additionally, for a matrix $\bm{M}$ we write $\bm{M}_{\mathcal{S}}$ to denote the submatrix of $\bm{M}$ with row indices $\mathcal{S}$. The ball with center $\bm{x}$ and radius $r$ will be denoted by $\mathcal{B}_r (\bm{x})$. Real valued functions are denoted by italicized characters, such as $f(\cdot)$, while vector valued functions are denoted by boldface characters, such as $\bm{h}(\cdot)$. The gradient of a real valued function $f(\bm{x})$ is denoted by $\nabla f (\bm{x})$ and is a row vector. The Jacobian matrix of a vector valued function $\bm{h} : \mathbb{R}^n \to \mathbb{R}^{\ell}$ is denoted by $\nabla \bm{h} \in \mathbb{R}^{\ell \times n}$. Given a vector $\bm{x}$, we will write $\mathcal{A} (\bm{x})$ denote the set of active constraints at $\bm{x}$, that is $\mathcal{A} (\bm{x}) := \{ i \in \mathbb{N} : \bm{a}_i^{\intercal} \bm{x} = 0 \}$ where $\bm{a}_i^{\intercal}$ is the $i$th row of matrix $\bm{A}$. For an integer $j$, we will write $f \in \mathcal{C}^j$ to denote that the function $f$ is $j$ times continuously differentiable. We write $c$ to denote a generic nonnegative constant that takes on different values in different inequalities. Given an interval $[a,b] \subset \mathbb{R}$, we write $Proj_{[a,b]} (\bm{x})$ to denote the euclidean projection of each component of the vector $\bm{x}$ onto the interval $[a,b]$. That is, if $\bm{v} = Proj_{[a,b]} (\bm{x})$ then the $j$th component of $\bm{v}$ is given by
\begin{align}
v_j = \left\{
	\begin{array}{lll}
	a & : & x_j \leq a \\
	x_j & : & a < x_j < b \\
    b & : & x_j \geq b
	\end{array}
	\right. .
\end{align}

Additionally, in order to simplify the statement of several results throughout the paper, we provide abbreviations for assumptions that are used. Note that each assumption will be clearly referenced when required.
\begin{enumerate}[leftmargin=2\parindent,align=left,labelwidth=\parindent,labelsep=7pt]
%\item[(\textbf{LI})] Linear Independence: Given $\bm{x}$, the matrix {\footnotesize $\displaystyle \begin{bmatrix} \nabla \bm{h} (\bm{x}) \\ \bm{A} \end{bmatrix}$} is of full row rank.
\item[(\textbf{LICQ})] Linear Independence Constraint Qualification: Given $\bm{x}$, {\footnotesize $ \begin{bmatrix} \nabla \bm{h} (\bm{x}) \\ \bm{A}_{\mathcal{A}(\bm{x})} \end{bmatrix}$} has full row rank.
\item[(\textbf{SCS})] Strict complementary slackness holds at a minimizer of problem (\ref{prob:main-nlp}). That is, Definition~\ref{def:scs} holds at a minimizer of problem (\ref{prob:main-nlp}).
%\item[(\textbf{SOSC})] The second-order sufficient optimality conditions hold for a feasible point in problem (\ref{prob:main-nlp}). That is, the hypotheses of Theorem~\ref{thm:sosc} hold.
%\item[(\textbf{SSOSC})] The strong second-order sufficient optimality conditions hold for a feasible point in problem (\ref{prob:main-nlp}). That is, the hypotheses of Theorem~\ref{thm:ssosc} hold.
\end{enumerate}

\section{Error Estimators for Nonlinear Optimization} \label{sec:Error-Estimators}
In \cite{Hager2014}, the authors presented two error estimators for optimization problems of the form
\begin{align}
\begin{array}{cc}
\displaystyle \min_{\bm{x} \in \mathbb{R}^{n}} & f(\bm{x}) \\
\text{s.t.} & \bm{h}(\bm{x}) = \bm{0}, \ \bm{x} \geq \bm{0}.
\end{array}
\end{align}
As a natural generalization to the work %done by Hager and Mico-Umutesi 
in \cite{Hager2014}, we provide minor updates to these error estimators for use with problem (\ref{prob:main-nlp}). Both error estimators make use of the Lagrangian function $\mathcal{L}: \mathbb{R}^n \times \mathbb{R}^{\ell} \times \mathbb{R}^m \to \mathbb{R}$ for problem (\ref{prob:main-nlp}) defined by
\begin{align}
\mathcal{L} (\bm{x}, \bm{\lambda}, \bm{\mu}) = f(\bm{x}) + \bm{\lambda}^{\intercal} \bm{h}(\bm{x}) + \bm{\mu}^{\intercal} \bm{r}(\bm{x}).
\end{align}
The first error estimator is defined over the set $\mathcal{D}_0 := \{ (\bm{x}, \bm{\lambda}, \bm{\mu}) : \bm{x} \in \Omega, \bm{\lambda} \in \mathbb{R}^{\ell}, \bm{\mu} \geq \bm{0} \}$ and denoted $E_0 : \mathcal{D}_0 \to \mathbb{R}$ by 
\begin{align}
E_0 (\bm{x}, \bm{\lambda}, \bm{\mu}) = \sqrt{ \|\nabla_x \mathcal{L}(\bm{x}, \bm{\lambda}, \bm{\mu})\|^2 + \|\bm{h}(\bm{x})\|^2 - \bm{\mu}^{\intercal} \bm{r}(\bm{x})}. \label{def:E0}
\end{align}
%In addition to being a generalization of $E_0$ from \cite{Hager2014}, 
We note that $E_0$ here is a modified version of $E_0$ as it was originally defined in \cite{Hager2014}. In particular, if we were to generalize $E_0$ for problem (\ref{prob:main-nlp}) as defined in \cite{Hager2014} then the term $-\bm{\mu}^{\intercal} \bm{r}(\bm{x})$ would be squared in (\ref{def:E0}). This %minor 
modification to $E_0$ results in beneficial properties that can be exploited in the global and local convergence analysis of NPASA. Additionally, in Corollary~\ref{cor:E0-error-bound} we will show that $E_0$ as defined in (\ref{def:E0}) satisfies an error bound of the same form as $E_1$ regardless of whether or not strict complementary slackness holds which was previously required for $E_0$ as defined in \cite{Hager2014}. 
\iffalse
In section~\ref{subsec:ErrorAnalysis}, we will show that $E_0$ satisfies the error bound 
\begin{align}
\|\bm{x} - \bm{x}^*\| + \|\bm{\lambda} - \hat{\bm{\lambda}}\| + \|\bm{\mu} - \hat{\bm{\mu}}\| \leq c E_0 (\bm{x}, \bm{\lambda}, \bm{\mu}) \label{eq:E0-bound}
\end{align}
for all $(\bm{x}, \bm{\lambda}, \bm{\mu}) \in \mathcal{D}_0$ in a neighborhood of a KKT point $(\bm{x}^*, \bm{\lambda}^*, \bm{\mu}^*)$ and some constant $c$. Here we write $(\hat{\bm{\lambda}}, \hat{\bm{\mu}})$ to denote the projection of the multipliers $(\bm{\lambda}, \bm{\mu})$ onto the set of KKT multipliers at a stationary point $\bm{x}^*$. 
\fi

A second error estimator is considered that removes the restricted domain required by $E_0$. %, namely that $\bm{x} \in \Omega$ and $\bm{\mu} \geq 0$. 
This estimator makes use of the componentwise minimum function, denoted here by $\bm{\Phi} : \mathbb{R}^m \times \mathbb{R}^m \to \mathbb{R}^m$, so that the $i$th component is defined by
\begin{align}
\Phi_i \left(\bm{x}, \bm{y} \right) = \min\{ x_i, y_i\},
\end{align}
for $1 \leq i \leq m$. Then the second error estimator $E_1: \mathbb{R}^n \times \mathbb{R}^{\ell} \times \mathbb{R}^m \to \mathbb{R}$ is defined by 
\begin{align}
E_1 (\bm{x}, \bm{\lambda}, \bm{\mu}) = \sqrt{ \|\nabla_x \mathcal{L}(\bm{x}, \bm{\lambda}, \bm{\mu})\|^2 + \|\bm{h}(\bm{x})\|^2 + \|\bm{\Phi}\left(-\bm{r}(\bm{x}), \bm{\mu} \right)\|^2}. \label{def:E1}
\end{align}
The error bound property for $E_1$ is provided in Theorem~\ref{thm:E1-error-bound}.
\iffalse
In section~\ref{subsec:ErrorAnalysis}, we will show that $E_1$ also satisfies the error property given in (\ref{eq:E0-bound}) without the restriction to $\mathcal{D}_0$, that is,
\begin{align}
\|\bm{x} - \bm{x}^*\| + \|\bm{\lambda} - \hat{\bm{\lambda}}\| + \|\bm{\mu} - \hat{\bm{\mu}}\| \leq c E_1 (\bm{x}, \bm{\lambda}, \bm{\mu}) \label{eq:E1-bound}
\end{align}
for all $(\bm{x}, \bm{\lambda}, \bm{\mu})$ in a neighborhood of a KKT point $(\bm{x}^*, \bm{\lambda}^*, \bm{\mu}^*)$ and for some constant $c$.
\fi

In our discussion of NPASA, it will be useful to split the error estimators $E_0$ and $E_1$ into two parts. To this end, we define the \emph{multiplier} portion of the error estimators $E_0$ and $E_1$ by
\begin{align}
E_{m,0} (\bm{x}, \bm{\lambda}, \bm{\mu}) := \|\nabla_x \mathcal{L}(\bm{x}, \bm{\lambda}, \bm{\mu})\|^2 - \bm{\mu}^{\intercal} \bm{r}(\bm{x}) \label{def:Em0}
\end{align}
and
\begin{align}
E_{m,1} (\bm{x}, \bm{\lambda}, \bm{\mu}) := \|\nabla_x \mathcal{L}(\bm{x}, \bm{\lambda}, \bm{\mu})\|^2 + \|\bm{\Phi}\left(-\bm{r}(\bm{x}), \bm{\mu} \right)\|^2, \label{def:Em1}
\end{align}
respectively, and the \emph{constraint} portion of the error estimators by
\begin{align}
E_c (\bm{x}) := \| \bm{h} (\bm{x}) \|^2. \label{def:Ec}
\end{align}
Then from the definitions of $E_0$ and $E_1$ in (\ref{def:E0}) and (\ref{def:E1}) it follows that
\begin{align}
E_j (\bm{x}, \bm{\lambda}, \bm{\mu})^{2} = E_{m,j} (\bm{x}, \bm{\lambda}, \bm{\mu}) + E_c (\bm{x}), \label{eq:E=Em+Ec}
\end{align}
for $j \in \{0, 1\}$. 

As will be evident throughout the analysis in the following sections, there are practical benefits and drawbacks to each error estimator provided in this section. We briefly highlight some of the key differences between the error estimators to provide some insight on when each estimator is most useful. The first difference illustrated is that $E_1$ provides a tighter bound than $E_0$. This is established in the following Lemma. The proof is omitted as it is straightforward.
\begin{lemma} \label{lem:E0-E1-relationship}
Suppose $(\bm{x}, \bm{\lambda}, \bm{\mu}) \in \mathcal{D}_0 := \{ (\bm{x}, \bm{\lambda}, \bm{\mu}) : \bm{x} \in \Omega, \bm{\lambda} \in \mathbb{R}^{\ell}, \bm{\mu} \geq \bm{0} \}$. Then 
\begin{align}
E_{m,1} (\bm{x}, \bm{\lambda}, \bm{\mu}) \leq E_{m,0} (\bm{x}, \bm{\lambda}, \bm{\mu}). \label{result:E0-E1-relationship}
\end{align}
and
\begin{align}
E_1 (\bm{x}, \bm{\lambda}, \bm{\mu}) \leq E_0 (\bm{x}, \bm{\lambda}, \bm{\mu}). \label{result:E0-E1-relationship2}
\end{align}
\end{lemma}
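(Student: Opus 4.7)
The plan is to reduce both inequalities to a simple scalar fact about the componentwise minimum on the nonnegative orthant. Because $E_0$ and $E_1$ are both square roots of sums in which the constraint portion $E_c(\bm{x}) = \|\bm{h}(\bm{x})\|^2$ appears identically (per the decomposition in (\ref{eq:E=Em+Ec})), the second bound (\ref{result:E0-E1-relationship2}) will follow from the first by adding $E_c(\bm{x})$ to both sides and taking square roots, which preserves inequalities between nonnegative quantities. So the whole argument collapses to establishing the multiplier-part inequality (\ref{result:E0-E1-relationship}).

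For (\ref{result:E0-E1-relationship}), I would form the difference using the definitions (\ref{def:Em0}) and (\ref{def:Em1}). The $\|\nabla_x \mathcal{L}(\bm{x},\bm{\lambda},\bm{\mu})\|^2$ terms cancel, leaving the claim equivalent to
\begin{equation*}
-\bm{\mu}^{\intercal} \bm{r}(\bm{x}) \;\geq\; \|\bm{\Phi}(-\bm{r}(\bm{x}),\bm{\mu})\|^2.
\end{equation*}
On the domain $\mathcal{D}_0$ we have $\bm{\mu}\geq \bm{0}$ and $\bm{r}(\bm{x})\leq \bm{0}$, so writing $u_i := -r_i(\bm{x})\geq 0$ and $v_i := \mu_i\geq 0$, the inequality becomes
\begin{equation*}
\sum_{i=1}^{m} u_i v_i \;\geq\; \sum_{i=1}^{m} \min\{u_i,v_i\}^2,
\end{equation*}
which I would verify termwise: if $u_i,v_i\geq 0$ then $\min\{u_i,v_i\}^2 \leq \min\{u_i,v_i\}\cdot\max\{u_i,v_i\} = u_i v_i$. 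Summing gives the bound.

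Having established (\ref{result:E0-E1-relationship}), I would add $E_c(\bm{x}) = \|\bm{h}(\bm{x})\|^2\geq 0$ to both sides, obtaining $E_1(\bm{x},\bm{\lambda},\bm{\mu})^2 \leq E_0(\bm{x},\bm{\lambda},\bm{\mu})^2$ by (\ref{eq:E=Em+Ec}), and take square roots to conclude (\ref{result:E0-E1-relationship2}). I don't anticipate any genuine obstacle here; the sign conventions on $\mathcal{D}_0$ make every term in the min-product inequality nonnegative, and the only thing to be careful about is remembering that $-\bm{\mu}^\intercal\bm{r}(\bm{x})$ is indeed $\geq 0$ on the domain, which is what lets us interpret the left side as $\sum u_iv_i$ rather than its negative.
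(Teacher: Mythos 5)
Your proposal is correct and matches the paper's own (omitted) argument essentially verbatim: both reduce to the componentwise bound $\min\{u_i,v_i\}^2 \leq u_i v_i$ for $u_i = -r_i(\bm{x}) \geq 0$, $v_i = \mu_i \geq 0$, giving $\|\bm{\Phi}(-\bm{r}(\bm{x}),\bm{\mu})\|^2 \leq -\bm{\mu}^{\intercal}\bm{r}(\bm{x})$, and then pass from (\ref{result:E0-E1-relationship}) to (\ref{result:E0-E1-relationship2}) via the decomposition (\ref{eq:E=Em+Ec}). The only cosmetic difference is that the paper routes the termwise inequality through an auxiliary function $\Psi_j(\bm{u},\bm{v}) = \sqrt{u_j v_j}$, which you sensibly bypass.
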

%%%%%%%%%%%%%%%%%%%%%%%%%%%%%%%%%%%%%

We now provide the error bound properties for $E_0$ and $E_1$. We note that the theorem and proof only require slight modifications to the statement and proof of Theorem 3.1 in \cite{Hager2014}. %Following this proof we will establish that (\ref{eq:E0-bound}) holds. 
As such, we omit the proof. The error bound result for the modified $E_0$ then immediately follows as a corollary to Theorem~\ref{thm:E1-error-bound} when Lemma~\ref{lem:E0-E1-relationship} is applied.

\begin{theorem} \label{thm:E1-error-bound}
Suppose that $\bm{x}^*$ is a local minimizer of problem (\ref{prob:main-nlp}) and that $f, \bm{h} \in \mathcal{C}^2$ at $\bm{x}^*$. If there exists $\bm{\lambda}^*$ and $\bm{\mu}^* \geq \bm{0}$ such that the KKT conditions and the second-order sufficient optimality conditions hold satisfied at $(\bm{x}^*, \bm{\lambda}^*, \bm{\mu}^*)$ then there exists a neighborhood $\mathcal{N}$ of $\bm{x}^*$ and a constant $c$ such that
\begin{align}
\|\bm{x} - \bm{x}^*\| + \|\bm{\lambda} - \hat{\bm{\lambda}}\| + \|\bm{\mu} - \hat{\bm{\mu}}\| 
\leq c E_1 (\bm{x}, \bm{\lambda}, \bm{\mu}) \label{result:E1-error-bound}
\end{align}
for all $\bm{x} \in \mathcal{N}$ where $(\hat{\bm{\lambda}}, \hat{\bm{\mu}})$ denotes the projection of $(\bm{\lambda}, \bm{\mu})$ onto the set of KKT multipliers at $\bm{x}^*$. 
\end{theorem}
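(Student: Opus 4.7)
The plan is to adapt the argument from \cite{Hager2014} by contradiction, reducing the bound to a limiting analysis controlled by the second-order sufficient conditions. Suppose, for contradiction, that no such neighborhood $\mathcal{N}$ and constant $c$ exist. Then there is a sequence $(\bm{x}_k, \bm{\lambda}_k, \bm{\mu}_k) \to (\bm{x}^*, \tilde{\bm{\lambda}}, \tilde{\bm{\mu}})$ with
\[
\delta_k := \|\bm{x}_k - \bm{x}^*\| + \|\bm{\lambda}_k - \hat{\bm{\lambda}}_k\| + \|\bm{\mu}_k - \hat{\bm{\mu}}_k\| > 0 \quad \text{and} \quad E_1(\bm{x}_k, \bm{\lambda}_k, \bm{\mu}_k)/\delta_k \to 0,
\]
where $(\hat{\bm{\lambda}}_k, \hat{\bm{\mu}}_k)$ denotes the projection of $(\bm{\lambda}_k, \bm{\mu}_k)$ onto the (closed, by SOSC nonempty) KKT multiplier set $M(\bm{x}^*)$. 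The first task would be to confirm that this projection is well-defined and that the sequence $(\hat{\bm{\lambda}}_k, \hat{\bm{\mu}}_k)$ stays bounded, so we may pass to a subsequence for which it converges to some $(\bm{\lambda}^*, \bm{\mu}^*) \in M(\bm{x}^*)$.

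Next I would introduce the normalized directions
\[
\bm{p}_k = \tfrac{\bm{x}_k - \bm{x}^*}{\delta_k}, \qquad \bm{q}_k = \tfrac{\bm{\lambda}_k - \hat{\bm{\lambda}}_k}{\delta_k}, \qquad \bm{s}_k = \tfrac{\bm{\mu}_k - \hat{\bm{\mu}}_k}{\delta_k},
\]
which satisfy $\|\bm{p}_k\| + \|\bm{q}_k\| + \|\bm{s}_k\| = 1$, and extract a limit $(\bm{p}, \bm{q}, \bm{s})$ of unit sum norm. Using that $E_1 / \delta_k \to 0$, each of the three squared terms in $E_1^2$ divided by $\delta_k^2$ tends to zero. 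A $\mathcal{C}^2$ Taylor expansion of $\nabla_x \mathcal{L}$ and $\bm{h}$ about $(\bm{x}^*, \bm{\lambda}^*, \bm{\mu}^*)$, together with the KKT identities for the limiting multipliers, then yields the linearized relations
\[
\nabla^2_{xx} \mathcal{L}(\bm{x}^*, \bm{\lambda}^*, \bm{\mu}^*)\, \bm{p} + \nabla \bm{h}(\bm{x}^*)^{\intercal} \bm{q} + \bm{A}^{\intercal} \bm{s} = \bm{0}, \qquad \nabla \bm{h}(\bm{x}^*)\, \bm{p} = \bm{0}.
\]

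The most delicate step is the treatment of the complementarity term $\bm{\Phi}(-\bm{r}(\bm{x}), \bm{\mu})$, which I would handle by splitting the index set $\{1, \dots, m\}$ according to the behavior at $\bm{x}^*$. On indices where $r_i(\bm{x}^*) < 0$, the condition $\Phi_i/\delta_k \to 0$ forces $\mu_{k,i}^* = 0$ and controls the corresponding component of $\bm{s}$. On indices where $r_i(\bm{x}^*) = 0$, it produces the complementary-like information that either $\bm{A}_i \bm{p} = 0$ or the limiting multiplier component vanishes, the same type of structure used in the bound-constrained proof in \cite{Hager2014}. Together with $\nabla \bm{h}(\bm{x}^*)\bm{p} = \bm{0}$, this shows $\bm{p}$ lies in the critical cone at $(\bm{x}^*, \bm{\lambda}^*, \bm{\mu}^*)$, and testing the first linearized identity against $\bm{p}$ then gives $\bm{p}^{\intercal} \nabla^2_{xx} \mathcal{L}(\bm{x}^*, \bm{\lambda}^*, \bm{\mu}^*) \bm{p} \le 0$. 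By the second-order sufficient conditions, this forces $\bm{p} = \bm{0}$.

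Once $\bm{p} = \bm{0}$, the first linearized identity reduces to $\nabla \bm{h}(\bm{x}^*)^{\intercal} \bm{q} + \bm{A}^{\intercal} \bm{s} = \bm{0}$, which combined with the complementarity information and the fact that $(\hat{\bm{\lambda}}_k, \hat{\bm{\mu}}_k)$ are the closest KKT multipliers to $(\bm{\lambda}_k, \bm{\mu}_k)$ shows that $(\bm{q}, \bm{s})$ is orthogonal to the tangent directions in $M(\bm{x}^*)$, while at the same time lying in that tangent space. This contradicts $\|\bm{p}\| + \|\bm{q}\| + \|\bm{s}\| = 1$ and proves the bound. The error bound for $E_0$ stated in Corollary~\ref{cor:E0-error-bound} then follows immediately by combining this result with inequality (\ref{result:E0-E1-relationship2}) of Lemma~\ref{lem:E0-E1-relationship}. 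I expect the main obstacle to be the bookkeeping in the complementarity splitting, particularly verifying that the projection-based normalization $(\hat{\bm{\lambda}}_k, \hat{\bm{\mu}}_k)$ provides enough information to conclude $(\bm{q}, \bm{s}) = \bm{0}$ without assuming uniqueness of the KKT multipliers or strict complementary slackness.
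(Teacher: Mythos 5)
Your proposal takes a genuinely different route from the paper's. The paper omits its proof, stating that it follows from slight modifications to Theorem 3.1 of \cite{Hager2014}; that argument is constructive rather than by contradiction. Given $(\bm{x}, \bm{\lambda}, \bm{\mu})$ with $\varepsilon = E_1(\bm{x}, \bm{\lambda}, \bm{\mu})$, one builds a modified multiplier $\bm{\mu}_0$ and a residual $\bm{y}$ with $\|\bm{\mu} - \bm{\mu}_0\| \leq \varepsilon$ and $\|\bm{y}\| \leq c\,\varepsilon$, via a four-case componentwise analysis on $\min\{-r_i(\bm{x}), \mu_i\}$, so that the perturbed KKT system holds exactly as a generalized equation $T(\bm{x}, \bm{\lambda}, \bm{\mu}_0) + \bm{y} \in \mathcal{F}(\bm{\mu}_0)$, where $\mathcal{F}$ carries the normal cone to $\bm{\mu} \geq \bm{0}$; one then invokes the upper-Lipschitz stability theorem for KKT systems under the second-order sufficient condition (Theorem 2.1 of Hager's 1999 stability paper), which already encapsulates the second-order analysis and is valid for nonunique, degenerate multipliers. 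Your normalized-sequence contradiction argument instead re-proves that stability result from scratch, in the style of error-bound proofs via "noncriticality." The paper's route buys brevity and automatic handling of degeneracy; yours buys self-containedness.

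However, your argument has a genuine gap at the final SOSC step. You extract a limit $(\bm{x}^*, \tilde{\bm{\lambda}}, \tilde{\bm{\mu}})$ of the offending sequence; since $E_1 \to 0$ along it, $(\tilde{\bm{\lambda}}, \tilde{\bm{\mu}})$ is \emph{some} KKT multiplier pair, and the projections $(\hat{\bm{\lambda}}_k, \hat{\bm{\mu}}_k)$ converge to it --- but nothing forces it to equal the pair $(\bm{\lambda}^*, \bm{\mu}^*)$ at which SOSC is assumed. To make the zeroth-order term vanish, your Taylor expansion of $\nabla_x \mathcal{L}$ must be taken about $(\bm{x}^*, \hat{\bm{\lambda}}_k, \hat{\bm{\mu}}_k)$, so the limiting identity involves $\nabla^2_{xx}\mathcal{L}(\bm{x}^*, \tilde{\bm{\lambda}}, \tilde{\bm{\mu}})$; since $\bm{h}$ is nonlinear this Hessian depends on $\tilde{\bm{\lambda}}$. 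Deriving $\bm{p}^{\intercal} \nabla^2_{xx}\mathcal{L}(\bm{x}^*, \tilde{\bm{\lambda}}, \tilde{\bm{\mu}})\, \bm{p} \leq 0$ therefore does not contradict SOSC at $(\bm{\lambda}^*, \bm{\mu}^*)$ unless $\tilde{\bm{\lambda}} = \bm{\lambda}^*$: the critical cone is multiplier-independent, but the Hessian of the Lagrangian is not, and SOSC at one multiplier does not transfer across a nonunique multiplier set. The repair is to establish the bound on a neighborhood of the full triple $(\bm{x}^*, \bm{\lambda}^*, \bm{\mu}^*)$ --- which is what the proof in \cite{Hager2014} effectively does, and what the application of the 1999 stability theorem requires --- since then $\|(\hat{\bm{\lambda}}_k, \hat{\bm{\mu}}_k) - (\bm{\lambda}^*, \bm{\mu}^*)\| \leq 2 \|(\bm{\lambda}_k, \bm{\mu}_k) - (\bm{\lambda}^*, \bm{\mu}^*)\| \to 0$ and the limiting Hessian is the right one. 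This restriction also fixes a second, related soft spot: if only $\bm{x}$ is confined to $\mathcal{N}$, as in your setup, you cannot even extract a convergent multiplier subsequence when the multiplier set is unbounded. By contrast, your closing step for $(\bm{q}, \bm{s})$ is sound in substance: after passing to a subsequence on which the zero pattern of $\hat{\bm{\mu}}_k$ is constant, the sign information from the $\bm{\Phi}$ term shows $(\bm{q}, \bm{s})$ is a feasible direction for the multiplier polyhedron at $(\hat{\bm{\lambda}}_k, \hat{\bm{\mu}}_k)$, and the variational inequality characterizing the projection yields $\|(\bm{q}, \bm{s})\|^2 \leq 0$ in the limit, as needed.
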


\begin{corollary} \label{cor:E0-error-bound}
Suppose that the hypotheses of Theorem~\ref{thm:E1-error-bound} are satisfied at $(\bm{x}^*, \bm{\lambda}^*, \bm{\mu}^*)$. Then there exists a neighborhood $\mathcal{N}$ of $\bm{x}^*$ and a constant $c$ such that
\begin{align}
\|\bm{x} - \bm{x}^*\| + \|\bm{\lambda} - \hat{\bm{\lambda}}\| + \|\bm{\mu} - \hat{\bm{\mu}}\| 
\leq c E_0 (\bm{x}, \bm{\lambda}, \bm{\mu}) \label{result:E0-error-bound}
\end{align}
for all $(\bm{x}, \bm{\lambda}, \bm{\mu}) \in \mathcal{D}_0$ with $\bm{x} \in \mathcal{N}$ where $(\hat{\bm{\lambda}}, \hat{\bm{\mu}})$ denotes the projection of $(\bm{\lambda}, \bm{\mu})$ onto the set of KKT multipliers at $\bm{x}^*$. 
\end{corollary}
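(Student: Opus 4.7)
The plan is to derive this corollary as a direct consequence of the two preceding results, since all of the substantive work has already been done. First I would invoke Theorem~\ref{thm:E1-error-bound} under the stated hypotheses to obtain a neighborhood $\mathcal{N}$ of $\bm{x}^*$ and a constant $c$ such that
\begin{align*}
\|\bm{x} - \bm{x}^*\| + \|\bm{\lambda} - \hat{\bm{\lambda}}\| + \|\bm{\mu} - \hat{\bm{\mu}}\| \leq c E_1(\bm{x}, \bm{\lambda}, \bm{\mu})
\end{align*}
for all $\bm{x} \in \mathcal{N}$, with $(\hat{\bm{\lambda}}, \hat{\bm{\mu}})$ denoting the projection of $(\bm{\lambda}, \bm{\mu})$ onto the KKT multiplier set at $\bm{x}^*$. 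This is precisely the form of the desired bound but with $E_1$ in place of $E_0$ on the right-hand side.

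Next I would restrict attention to the smaller domain where $E_0$ is defined, namely $(\bm{x}, \bm{\lambda}, \bm{\mu}) \in \mathcal{D}_0$. On this set Lemma~\ref{lem:E0-E1-relationship} yields the pointwise comparison $E_1(\bm{x}, \bm{\lambda}, \bm{\mu}) \leq E_0(\bm{x}, \bm{\lambda}, \bm{\mu})$. Chaining this inequality onto the bound from Theorem~\ref{thm:E1-error-bound} immediately produces
\begin{align*}
\|\bm{x} - \bm{x}^*\| + \|\bm{\lambda} - \hat{\bm{\lambda}}\| + \|\bm{\mu} - \hat{\bm{\mu}}\| \leq c E_1(\bm{x}, \bm{\lambda}, \bm{\mu}) \leq c E_0(\bm{x}, \bm{\lambda}, \bm{\mu})
\end{align*}
for every $(\bm{x}, \bm{\lambda}, \bm{\mu}) \in \mathcal{D}_0$ with $\bm{x} \in \mathcal{N}$, which is precisely (\ref{result:E0-error-bound}) with the same constant and neighborhood inherited from Theorem~\ref{thm:E1-error-bound}.

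There is no real obstacle to overcome here; in particular, no shrinking of the neighborhood and no adjustment of the constant is needed. The only conceptual point worth emphasizing is that the corollary highlights the benefit mentioned in the discussion following (\ref{def:E0}): this modified form of $E_0$ satisfies an error bound of the same order as $E_1$ without requiring strict complementary slackness, which was required to obtain a comparable bound for the original $E_0$ in \cite{Hager2014}.
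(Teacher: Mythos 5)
Your proposal is correct and matches the paper's intended argument exactly: the paper states that Corollary~\ref{cor:E0-error-bound} ``immediately follows as a corollary to Theorem~\ref{thm:E1-error-bound} when Lemma~\ref{lem:E0-E1-relationship} is applied,'' which is precisely your chaining of $c\,E_1(\bm{x},\bm{\lambda},\bm{\mu}) \leq c\,E_0(\bm{x},\bm{\lambda},\bm{\mu})$ on $\mathcal{D}_0$ onto the bound (\ref{result:E1-error-bound}). Your observations that the same constant and neighborhood carry over, and that the restriction to $\mathcal{D}_0$ is the only modification needed, are also accurate.
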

Based on the error bounds in (\ref{result:E1-error-bound}) and (\ref{result:E0-error-bound}) it follows that $E_0$ and $E_1$ are useful for determining when to terminate an iterative method for locating a stationary point of problem (\ref{prob:main-nlp}). In particular, criterion for stopping an iterative method for solving problem (\ref{prob:main-nlp}) could be when either $E_0 (\bm{x}, \bm{\lambda}, \bm{\mu}) \leq \varepsilon$ or $E_1 (\bm{x}, \bm{\lambda}, \bm{\mu}) \leq \varepsilon$ for some small constant $\varepsilon$. 
%At such a point, the program could stop and return the point $(\bm{x}, \bm{\lambda}, \bm{\mu})$. 
A practical take-away from Lemma~\ref{lem:E0-E1-relationship} is that $E_1$ is a better choice than $E_0$ for stopping criterion in NPASA. 
%With these error bound results established we now update the active set algorithm from \cite{Hager2014} for solving (\ref{prob:E0-E1-minimization}) when $j = 1$.

We now consider a setting in which $E_0$ is more beneficial than $E_1$. Based on the bounds (\ref{result:E1-error-bound}) and (\ref{result:E0-error-bound}) established in Theorem~\ref{thm:E1-error-bound} and Corollary~\ref{cor:E0-error-bound}, it is of interest of minimize either $E_0$ or $E_1$ subject to the constraints $\bm{x} \in \Omega$ and $\bm{\mu} \geq \bm{0}$. In particular, for a fixed vector $\bm{\bar{x}} \in \Omega$ near a stationary point it is of interest to %minimize an error estimator over the dual variables $\bm{\lambda}$ and $\bm{\mu} \geq \bm{0}$ by solving the problem
solve the problem
\begin{align}
\begin{array}{cc}
\displaystyle \min_{\bm{\lambda}, \bm{\mu}} & E_j (\bm{\bar{x}}, \bm{\lambda}, \bm{\mu})^2 \\
\text{s.t.} & \bm{\mu} \geq \bm{0},
\end{array} \label{prob:E0-E1-minimization}
\end{align}
for some $j \in \{0, 1\}$. In such a setting, we claim that it may be easier to choose $j = 0$ instead of $j = 1$. To support this claim, suppose that $\mathcal{B}$ is a neighborhood of $\bm{\bar{x}}$ such that $f, \bm{h} \in \mathcal{C}^t (\mathcal{B})$, for some positive integer $t$. By definition of $E_0$, it follows immediately that $E_0 (\bm{x}, \bm{\lambda}, \bm{\mu})^2 \in \mathcal{C}^t (\mathcal{B})$. As such, differentiable methods can be used to solve (\ref{prob:E0-E1-minimization}) when $j = 0$. However, due to the nature of the componentwise minimum function in the definition of $E_1$ in (\ref{def:E1}), it is not necessarily true that $E_1 (\bm{x}, \bm{\lambda}, \bm{\mu})^2 \in \mathcal{C}^t (\mathcal{B})$. Hence, alternative techniques, such as the active set algorithm in \cite{Hager2014}, would be required to solve (\ref{prob:E0-E1-minimization}) when $j = 1$. Additionally, differentiability of the error estimator is useful in establishing desirable local convergence results in the companion paper \cite{diffenderfer2020local}. Ultimately, our method will leverage the strengths of each error estimator for solving problem (\ref{prob:main-nlp}). We conclude this comparison by providing conditions under which $E_1$ is differentiable in the following result. The proof is omitted as it is straightforward.
\begin{lemma} \label{lem:E1diff}
Let $(\bm{x}^*, \bm{\lambda}^*, \bm{\mu}^*)$ be a KKT point for problem (\ref{prob:main-nlp}) that satisfies assumption (\textbf{SCS}) and suppose that $f, \bm{h} \in \mathcal{C}^2$ in a neighborhood of $\bm{x}^*$. Then there exists a neighborhood $\mathcal{N}$ about $(\bm{x}^*, \bm{\lambda}^*, \bm{\mu}^*)$ such that $E_1 (\bm{x}, \bm{\lambda}, \bm{\mu}) \in \mathcal{C}^2$, for all $(\bm{x}, \bm{\lambda}, \bm{\mu}) \in \mathcal{N}$.
\end{lemma}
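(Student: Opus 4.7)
My plan is to isolate the only non-smooth ingredient of $E_1$, namely the componentwise minimum $\bm{\Phi}$, and use (\textbf{SCS}) to resolve each $\min$ into a locally affine selection. The remaining pieces then inherit $\mathcal{C}^2$ regularity directly from the smoothness of $f$ and $\bm{h}$, and the composition with $\sqrt{\,\cdot\,}$ delivers $E_1 \in \mathcal{C}^2$. First, under the hypothesis $f, \bm{h} \in \mathcal{C}^2$, the expression $\nabla_x \mathcal{L}(\bm{x}, \bm{\lambda}, \bm{\mu}) = \nabla f(\bm{x})^{\intercal} + \nabla \bm{h}(\bm{x})^{\intercal} \bm{\lambda} + \bm{A}^{\intercal} \bm{\mu}$ is $\mathcal{C}^1$ jointly in $(\bm{x}, \bm{\lambda}, \bm{\mu})$ (recall that $\bm{r}$ is affine), so $\|\nabla_x \mathcal{L}\|^2$ and $\|\bm{h}\|^2$ are both $\mathcal{C}^2$ on a neighborhood of $(\bm{x}^*, \bm{\lambda}^*, \bm{\mu}^*)$. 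All of the work therefore lies in the third term $\|\bm{\Phi}(-\bm{r}(\bm{x}), \bm{\mu})\|^2$.

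The heart of the argument is the (\textbf{SCS}) localization of $\bm{\Phi}$. KKT feasibility and complementary slackness give $r_i(\bm{x}^*) \leq 0$, $\mu_i^* \geq 0$, and $\mu_i^* r_i(\bm{x}^*) = 0$ for each $i$, while (\textbf{SCS}) forces exactly one of $\mu_i^* > 0$ or $-r_i(\bm{x}^*) > 0$ at every index. Partition $\{1, \ldots, m\}$ into $\mathcal{A}_1 = \{i : \mu_i^* > 0\}$ (hence $-r_i(\bm{x}^*) = 0$) and $\mathcal{A}_2 = \{i : -r_i(\bm{x}^*) > 0\}$ (hence $\mu_i^* = 0$); these sets cover the indices and are disjoint. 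By continuity of $r_i$ in $\bm{x}$ and of $\mu_i$ in $\bm{\mu}$, the strict inequalities $-r_i(\bm{x}) < \mu_i$ for $i \in \mathcal{A}_1$ and $\mu_i < -r_i(\bm{x})$ for $i \in \mathcal{A}_2$ persist on some neighborhood $\mathcal{N}$ of $(\bm{x}^*, \bm{\lambda}^*, \bm{\mu}^*)$. On $\mathcal{N}$ the $\min$ in each coordinate is therefore achieved by a fixed argument, giving
\begin{align*}
\Phi_i(-r_i(\bm{x}), \mu_i) = \begin{cases} -r_i(\bm{x}), & i \in \mathcal{A}_1, \\ \mu_i, & i \in \mathcal{A}_2, \end{cases}
\end{align*}
each of which is affine in $(\bm{x}, \bm{\mu})$. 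Consequently $\bm{\Phi}$, and hence $\|\bm{\Phi}\|^2$, is $\mathcal{C}^{\infty}$ on $\mathcal{N}$.

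Assembling the pieces, the radicand of $E_1$ is a sum of $\mathcal{C}^2$ functions on $\mathcal{N}$ and hence lies in $\mathcal{C}^2(\mathcal{N})$, and $E_1 \in \mathcal{C}^2(\mathcal{N})$ then follows by composition with the square root. The main obstacle is the (\textbf{SCS}) localization of $\bm{\Phi}$; once the two index sets are identified and the strict sign conditions are propagated to the neighborhood by continuity, the rest is a routine appeal to closure of $\mathcal{C}^2$ under sums, products, and smooth composition. The same argument yields $\mathcal{C}^t$ regularity of $E_1$ on a neighborhood of $(\bm{x}^*, \bm{\lambda}^*, \bm{\mu}^*)$ whenever $f, \bm{h} \in \mathcal{C}^t$.
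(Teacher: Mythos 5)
Your proof takes essentially the same route as the paper's: the only non-smooth ingredient is $\bm{\Phi}$, and (\textbf{SCS}) together with continuity (the paper invokes linearity of $\bm{r}$) resolves each componentwise minimum into a fixed affine branch --- $-r_i(\bm{x})$ on the active set, $\mu_i$ on its complement --- on a neighborhood of $(\bm{x}^*, \bm{\lambda}^*, \bm{\mu}^*)$, after which the radicand is a sum of $\mathcal{C}^2$ functions; your version is in fact cleaner, since the paper's draft proof carries sign slips (writing $r_i(\bm{x}^*) > 0$ rather than $-r_i(\bm{x}^*) > 0$) and index sets left over from the $\bm{x} \geq \bm{0}$ formulation, which you state correctly. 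One caveat on your last step: composition with $\sqrt{\cdot}$ is only valid where the radicand is positive, and at the KKT point itself $E_1(\bm{x}^*, \bm{\lambda}^*, \bm{\mu}^*) = 0$ (by (\textbf{KKT.1}), (\textbf{KKT.2}), and the fact that under (\textbf{SCS}) each $\min\{-r_i(\bm{x}^*), \mu_i^*\}$ vanishes), so $\sqrt{\cdot}$ is not differentiable precisely at the center of $\mathcal{N}$; what your argument actually establishes is $E_1^2 \in \mathcal{C}^2(\mathcal{N})$, which is the quantity minimized in (\ref{prob:E0-E1-minimization}) and the one the convergence analysis needs. The paper's own proof elides this same point, so it is a shared imprecision in the statement rather than a defect unique to your write-up.
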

\section{NPASA: Algorithm for Nonlinear Programming Problems} \label{section:npasa}
In this section, we present details of the Nonlinear Polyhedral Active Set Algorithm (NPASA) designed to solve problem (\ref{prob:main-nlp}). %Below, we provide an outline which establishes a method for solving a nonlinear constrained optimization problem of the form (\ref{prob:main-nlp}). We note that a similar method for solving problem (\ref{prob:main-nlp}) was presented in \cite{Hager1993}, however, several updates designed to improve performance and robustness have been made some of which we briefly outline here. First, NPASA employs the updated error estimators $E_0$ and $E_1$ that satisfy the error bound properties established in Theorem~\ref{thm:E1-error-bound} and Corollary~\ref{cor:E0-error-bound}. These estimators are an improvement over the original error estimator used in \cite{Hager1993} that did not account for complimentarity slackness. Second, the technique used for minimizing the constraint error has been updated with the goal of improving robustness. Additionally, NPASA makes use of several recently developed methods, such as PPROJ \cite{Hager2016} and PASA \cite{HagerActive}, for solving subproblems to improve performance. In addition to using PASA, we propose some minor modifications to PASA in Section~\ref{section:npasa-phase-one} that provide benefits when solving one of the subproblems used in our approach.
Together with the pseudocode for NPASA, we provide a high-level discussion that serves to justify and motivate our approach. 
%This section is organized as follows. In Section~\ref{section:npasa-alg}, we present psuedocode for the Nonlinear Polyhedral Active Set Algorithm (NPASA) designed to solve problem (\ref{prob:main-nlp}) and a provide a high-level discussion with motivation for each component of the algorithm. Section~\ref{sec:mod-pasa-alg} contains the modified PASA algorithm for solving a problem arising in the NPASA algorithm. 
%Finally, Section~\ref{subsec:ASA} updates the previously developed active set algorithm, Figure 2 in \cite{Hager2014}, to account for the more general nonlinear program formulation in (\ref{prob:main-nlp}).

\subsection{NPASA Overview and Pseudocode} \label{section:npasa-alg}
NPASA is organized into two phases: %with the goal of performing a balanced reduction of the \emph{multiplier} and \emph{constraint} portions of the error estimator. 
Phase one contains an algorithm designed to ensure global convergence of NPASA, the Global Step (GS) algorithm, while phase two contains an algorithm designed to achieve fast local convergence by performing a balanced reduction of the constraint and multiplier error estimators, the Local Step (LS) algorithm. 
%We now introduce the GS and LS algorithms with their respective pseudocodes in Algorithm~\ref{alg:gs} and Algorithm~\ref{alg:cms}. %, respectively. 
NPASA improves upon a dual algorithm established by Hager \cite{Hager1993} through the use of enhanced error estimators, a cleverly designed phase two that can provide quadratic local convergence under reasonable assumptions, and by leveraging advances for solving polyhedral constrained optimization problems \cite{HagerActive, Hager2016}.
Note that we will write $(\bm{x}_k, \bm{\lambda}_k, \bm{\mu}_k)$ to denote the current primal-dual iterate of NPASA.

\begin{center}
\begin{algorithm}[!t]
%	\algsetup{linenosize=\notsotiny}
%	\notsotiny
	\caption{GS - Global Step Algorithm} \label{alg:gs} 
%	\begin{algorithmic}[1]
        % Initialize parameters
        {\bfseries Inputs:} Initial guess $(\bm{x}, \bm{\lambda}, \bm{\mu})$ and scalar parameters $\phi > 1$, $\bar{\lambda} > 0$, and $q$\\
        % Phase one
%		\STATE{}
		$\displaystyle \bm{\bar{\lambda}} = Proj_{[-\bar{\lambda}, \bar{\lambda}]} \left( \bm{\lambda} \right)$\\
		$\displaystyle \bm{x}' = \arg\min \left\{ \mathcal{L}_{q} \left( \bm{x}, \bm{\bar{\lambda}} \right) : \bm{x} \in \Omega \right\}$\\
		Set $\bm{\lambda}' \gets \bm{\bar{\lambda}} + 2 q \bm{h}(\bm{x}')$\\
		Construct $\bm{\mu}(\bm{x}', 1)$ from PPROJ output\footnote{Using formula (B.20) in companion paper \cite{diffenderfer2020local}}  and set $\bm{\mu}' \gets \bm{\mu}(\bm{x}', 1)$\\
%    	\STATE{}
		{\bfseries Return} $(\bm{x}', \bm{\lambda}', \bm{\mu}')$
%	\end{algorithmic}
\end{algorithm}
\end{center}

\textbf{Global Step (GS).} %Phase one serves to push the iterates towards a stationary point in the case that phase two is not creating a sufficient decrease in the global error estimator, $E_1$. 
The goal here is to compute a new iterate $(\bm{x}_{k + 1}, \bm{\lambda}_{k + 1}, \bm{\mu}_{k + 1})$ such that $E_{m,0} (\bm{x}_{k + 1}, \bm{\lambda}_{k + 1}, \bm{\mu}_{k + 1}) \leq \theta E_c (\bm{x}_{k + 1})$. This is done by using a method of multipliers approach together with PASA \cite{HagerActive}. Specifically, we use $\bm{x}_k$ as a starting guess in an iterative method applied to the polyhedral constrained augmented Lagrangian problem 
\begin{align}
\begin{array}{cc}
\min & \mathcal{L}_q (\bm{x}, \bm{\bar{\lambda}}_k) \\
\text{s.t.} & \bm{x} \in \Omega 
\end{array} \label{prob:NPASA-phase-1}
\end{align}
where the objective function in problem (\ref{prob:NPASA-phase-1}) is the augmented Lagrangian function given by 
\begin{align}
\mathcal{L}_q (\bm{x}, \bm{\lambda}) = f (\bm{x}) + \bm{\lambda}^{\intercal} \bm{h}(\bm{x}) + q \| \bm{h}(\bm{x}) \|^2, \label{def:augmented-Lagrangian}
\end{align}
$\bm{\bar{\lambda}}_k = Proj_{[-\bar{\lambda}, \bar{\lambda}]} \left( \bm{\lambda}_k \right)$, $\bar{\lambda} > 0$ is a safeguarding parameter, and $q \in \mathbb{R}$ is a penalty parameter. At a local minimizer of problem (\ref{prob:NPASA-phase-1}), there are easily computable multipliers for which $E_{m,0}$ vanishes which we use to update $\bm{\lambda}_{k+1}$ and $\bm{\mu}_{k+1}$. To encourage desired convergence properties, we use a technique often referred to as modified augmented Lagarangian with safeguarding. This approach has been useful in establishing convergence in several implementations of augmented Lagrangian solvers \cite{Birgin2005, Luo2008, Wu2009}. 
%Note that in the context of \cite{Luo2008}, the augmented Lagrangian function we are using corresponds to $L_2$ with $\theta(s) = \frac{1}{2} s^2$, which is the Rockafellar Lagrangian function. Additionally, as we are solving the problem with only the equality constraints in the augmented Lagrangian we remove the function $(\cdot)_+$ from the definition of $L_2$ in \cite{Luo2008}. Exact details of the approach used for the global step are provided in Section~\ref{section:npasa-phase-one}.  %and pseudocode for the GS algorithm in NPASA can be found in Algorithm~\ref{alg:gs}.

\begin{center}
\begin{algorithm}[!t]
%	\algsetup{linenosize=\notsotiny}
%	\notsotiny
	\caption{LS - Local Step Algorithm} \label{alg:cms} 
        % Initialize parameters
		{\bfseries Inputs:} Initial guess $(\bm{x}, \bm{\lambda}, \bm{\mu})$ and scalar parameters $\theta \in (0,1)$, $\alpha \in (0, 1]$, $\beta \geq 1$, $\sigma \in (0,1)$, $\tau \in (0,1)$, $p >> 1$, $\delta \in (0,1)$, $\gamma \in (0,1)$ \\ %, $i_C \in \mathbb{Z}_+$, $i_M \in \mathbb{Z}_+$\\
        % Phase two 
        \emph{Constraint Step}: Set $\bm{w}_0 \gets \bm{x}$ and set $i \gets 0$.\\
%        \For{$i = 0, 1, \ldots, i_C - 1$}{
        \While{$E_c (\bm{w}_i) > \theta E_{m,1} (\bm{x}, \bm{\lambda}, \bm{\mu})$}{
            Choose $p_i$ such that $p_i \geq \max \left\{ \beta^2, \| \bm{h} (\bm{w}_i) \|^{-2} \right\}$ and set $s_i = 1$\\
  			$\left[ \bm{\overline{w}}_{i+1}, \bm{y}_{i+1} \right] = \arg\min \left\{\| \bm{w} - \bm{w}_i \|^2 + p_i \| \bm{y} \|^2 : \nabla \bm{h} (\bm{w}_i) (\bm{w} - \bm{w}_i) + \bm{y} = - \bm{h} (\bm{w}_i), \bm{w} \in \Omega \right\}$\\
            Set $\alpha_{i+1} \gets 1 - \| \bm{y}_{i+1} \|$\\
            \eIf{$\alpha_{i+1} < \alpha$}{
                \textbf{Return} $(\bm{x}, \bm{\lambda}, \bm{\mu})$
            }{
            \While{$\| \bm{h} (\bm{w}_i + s_i (\bm{\overline{w}}_{i+1} - \bm{w}_{i})) \| > (1 - \tau \alpha_{i+1} s_i) \| \bm{h} (\bm{w}_i) \|$}{
                Set $s_i \gets \sigma s_i$
            }
    	    Set $\bm{w}_{i+1} \gets \bm{w}_i + s_i ( \bm{\overline{w}}_{i+1} - \bm{w}_i)$ and set $i \gets i+1$
            }
        }
   	    Set $\bm{w} \gets \bm{w}_i$\\
        \emph{Multiplier Step}: Set $\bm{z}_0 \gets \bm{w}$, set $p_0 \gets p$, and set $i \gets 0$\\
    	$(\bm{\nu}_0, \bm{\eta}_0) \in \arg \min \left\{ E_{m,0} ( \bm{z}_0, \bm{\nu}, \bm{\eta}) + \gamma \| [ \bm{\nu}, \bm{\eta} ] \|^2 : \bm{\eta} \geq \bm{0} \right\}$\\
    	$\bm{\eta}_0' \in \arg\min \left\{ E_{m,1} ( \bm{z}_0, \bm{\nu}_0, \bm{\eta}) : \bm{\eta} \geq \bm{0} \right\}$\\
        %Set $K_0 \gets E_{m,0} ( \bm{z}_0, \bm{\nu}_0, \bm{\eta}_0)$\\
%        \For{$i = 0, 1, \ldots, i_M - 1$}{
        \While{$E_{m,1} (\bm{z}_i, \bm{\nu}_i, \bm{\eta}_i') > \theta E_c (\bm{w})$}{
            Increase $p_i$ if necessary\\
            $\bm{z}_{i+1} = \arg \min \left\{ \mathcal{L}_{p_i}^i (\bm{z}, \bm{\nu}_i) : \nabla \bm{h} (\bm{z}_i) (\bm{z} - \bm{z}_i) = 0, \bm{z} \in \Omega \right\}$\\
    	    $(\bm{\nu}_{i+1}, \bm{\eta}_{i+1}) \in \arg\min \left\{ E_{m,0} ( \bm{z}_{i+1}, \bm{\nu}, \bm{\eta}) + \gamma \| [ \bm{\nu}, \bm{\eta} ] \|^2 : \bm{\eta} \geq \bm{0} \right\}$\\
            $\bm{\eta}_{i+1}' \in \arg\min \left\{ E_{m,1} ( \bm{z}_{i+1}, \bm{\nu}_{i+1}, \bm{\eta}) : \bm{\eta} \geq \bm{0} \right\}$\\
            %\eIf{$E_{m,1} (\bm{z}_{i+1}, \bm{\nu}_{i+1}, \bm{\eta}_{i+1}') \leq \delta K_i$}{
            %    Set $K_{i+1} \gets E_{m,1} (\bm{z}_{i+1}, \bm{\nu}_{i+1}, \bm{\eta}_{i+1}')$
            %}{
            %    Set $(\bm{\nu}_{i+1}, \bm{\eta}_{i+1}') \gets (\bm{\nu}_i, \bm{\eta}_i')$ and $K_{i+1} \gets K_i$
            %}
            \eIf{$E_{m,1} (\bm{z}_{i+1}, \bm{\nu}_{i+1}, \bm{\eta}_{i+1}') > \delta E_{m,1} (\bm{z}_i, \bm{\nu}_i, \bm{\eta}_i')$}{
                \textbf{Return} $(\bm{x}, \bm{\lambda}, \bm{\mu})$
            }{
                Set $i \gets i+1$
            }
            
        }
		{\bfseries Return} $(\bm{z}_i, \bm{\nu}_i, \bm{\eta}_i')$
\end{algorithm}
\end{center}

%Next, we consider the subproblems in phase two of NPASA which are chosen to achieve a balanced reduction of the constraint and multiplier error for problem (\ref{prob:main-nlp}). 
\textbf{Local Step (LS).} LS, Algorithm~\ref{alg:cms}, consists of two subproblems formulated to achieve a balanced reduction of the constraint and multiplier error for problem (\ref{prob:main-nlp}):
\begin{enumerate}
\item[1.] {\bfseries Constraint Step}. Use iterative method to generate $\bm{w}$ satisfying $E_c (\bm{w}) \leq \theta E_{m,1} (\bm{x}_k, \bm{\lambda}_k, \bm{\mu}_k)$.
\item[2.] {\bfseries Multiplier Step}. Starting from $\bm{w}$, apply an iterative method to generate $(\bm{z}, \bm{\nu}, \bm{\eta})$ which satisfies $E_{m,1} (\bm{z}, \bm{\nu}, \bm{\eta}) \leq \theta E_c (\bm{w})$.
\end{enumerate}
%Steps 1 and 2 are combined to form the LS algorithm contained in phase two of NAPSA. 
%Pseudocode for LS can be found in Algorithm~\ref{alg:cms}. 
To solve the constraint step we use a perturbed Newton's method scheme. With a unit step size, Newton's method is given by the iterative scheme 
\begin{align}
\bm{w}_{i + 1} = \arg \min \left\{ \| \bm{w} - \bm{w}_i \|^2 : \nabla \bm{h} (\bm{w}_i) (\bm{w} - \bm{w}_i ) = - \bm{h} (\bm{w}_i), \bm{w} \geq \bm{0} \right\}, \label{5.1}
\end{align}
where we start with $\bm{w}_0 = \bm{x}_k$. Problem (\ref{5.1}) can be solved using a polyhedral projection algorithm such as PPROJ \cite{Hager2016}. Based on the analysis in \cite{Hager1993}, the procedure in step 1 is locally quadratically convergent when the second-order sufficient optimality conditions hold and the active constraint gradients are linearly independent. However, as the constraint set in (\ref{5.1}) may be infeasible, we encourage robustness of this approach by introducing a perturbation to the linear constraint. The modified problem %by a vector $\bm{y}$ and add a quadratic penalty to the objective function so that the problem 
becomes
\begin{align}
\left[ \bm{w}_{i + 1}, \bm{y}_{i + 1} \right] = \arg \min \left\{ \| \bm{w} - \bm{w}_i \|^2 + p \| \bm{y} \|^2 : \nabla \bm{h} (\bm{w}_i) (\bm{w} - \bm{w}_i ) + \bm{y} = - \bm{h} (\bm{w}_i), \bm{w} \geq \bm{0} \right\}, \label{5.1p}
\end{align}
where we start with $\bm{w}_0 = \bm{x}_k$ and $p > 0$ is a penalty parameter. Under this formulation, this problem can still be solved using the algorithm PPROJ \cite{Hager2016} and we consider this method in detail in Subsection~\ref{subsec:infeas}. 
\iffalse %%%%%%%%%%%%%%%%%%%%%%%%%%%%%%%%%%%%%
We note that Gill and Wong discuss an alternative method for handling infeasibility \cite{Gill2010} that emulates adding a 1-norm penalty to the cost function instead of the square of the 2-norm which was used in (\ref{5.1p}). In their approach, elastic variables are introduced to create the problem
\begin{align}
\min \left\{ \| \bm{w} - \bm{w}_i \|^2 + \rho \ \bm{e}^{\intercal} (\bm{v} + \bm{y}) : \nabla \bm{h} (\bm{w}_i) (\bm{w} - \bm{w}_i ) - \bm{v} + \bm{y} = - \bm{h} (\bm{w}_i), \bm{w}, \bm{v}, \bm{y} \geq \bm{0} \right\}, \label{5.2p}
%\bm{y}_{i + 1} = \arg\min \left\{ \| \bm{z} - \bm{y}_i \|^2 + \rho \ \bm{e}^{\intercal} (\bm{v} + \bm{w}) : \nabla \bm{h} (\bm{y}_i) (\bm{z} - \bm{y}_i ) - \bm{v} + \bm{w} = - \bm{h} (\bm{y}_i), \bm{z}, \bm{v}, \bm{w} \in \mathbb{R}_{\geq 0} \right\}, \label{5.2p}
\end{align}
where $\bm{e}$ is a vector of ones and $\rho$ is a penalty parameter. It follows that if there exists a feasible solution to (\ref{5.1}) then for sufficiently large $\rho$ the solution to (\ref{5.2p}) is equal to the solution for (\ref{5.1}) (see Section 12.3 of \cite{Fletcher1987}). However, as the formulation in (\ref{5.2p}) is incompatible with PPROJ \cite{Hager2016}, NPASA makes use of the formulation in (\ref{5.1p}) for the constraint step. 
\fi %%%%%%%%%%%%%%%%%%%%%%%%%%%%%%%%%%%%%%%%%%

More details on the constraint step %in lines 2 -- 12 
can be found in Sections~\ref{subsec:infeas} and \ref{subsec:constraint-step-analysis}. The main problem can be %in line 5 and is 
solved using the PPROJ \cite{Hager2016} algorithm followed by a line search. %in lines 10 --12. 
The remaining lines of the constraint step are to ensure desired local convergence properties are satisfied. In particular, the requirements on $p_i$ and $\alpha_i$ are covered in detail during the convergence analysis in Section~\ref{section:npasa-phase-two}. As a note to the reader, %line 8 of 
when $\alpha_{i+1} < \alpha$ then Algorithm~\ref{alg:cms} results in phase two immediately branching to phase one in NPASA. This branching is triggered when the perturbation in the constraint step becomes too large thereby indicating that the constraint step may be experiencing trouble minimizing the constraint error, $E_c$. In such a case, NPASA works to find a better point by executing GS then branching back to LS.

Step 2, the multiplier step, can be decomposed into three parts:
\begin{itemize}
\item[2a.] {\bfseries Dual Step for equality multiplier}: $\displaystyle (\bm{\nu}_i, \bm{\eta}_i) \in \arg\min_{\bm{\lambda}, \bm{\eta}} \left\{ E_{m,0} (\bm{z}_i, \bm{\nu}, \bm{\eta}) + \gamma \| [ \bm{\nu}, \bm{\eta} ] \|^2: \bm{\eta} \geq \bm{0} \right\}$\\

\item[2b.] {\bfseries Dual Step for inequality multiplier}: $\displaystyle \bm{\eta}_i' \in \arg\min_{\bm{\eta}} \left\{ E_{m,1} (\bm{z}_i, \bm{\nu}_i, \bm{\eta}) : \bm{\eta} \geq \bm{0} \right\}$\\

\item[2c.] {\bfseries Primal Step}: $\displaystyle \bm{z}_{i + 1} = \arg\min_{\bm{z}} \left\{ \mathcal{L}_{p_i}^{i} (\bm{z}, \bm{\nu}_i) : \nabla \bm{h}(\bm{z}_i) (\bm{z} - \bm{z}_i) = \bm{0}, \bm{z} \in \Omega \right\}$,
\end{itemize}
where Step 2c makes use of the function defined by
\begin{align}
\mathcal{L}_{p}^{i} (\bm{z}, \bm{\nu}) := f(\bm{z}) + \bm{\nu}^{\intercal} \bm{h}(\bm{z}) + p \| \bm{h}(\bm{z}) - \bm{h}(\bm{z}_i) \|^2. \label{def:penalized-Lagrangian}
\end{align}
For initialization, $\bm{z}_0$ is set equal to the final $\bm{w}$ generated in the Constraint Step. 

Noting that $E_{m,0}$ is twice continuously differentiable with respect to $\bm{\lambda}$ and $\bm{\mu}$ and the hessian is positive semi-definite, there are many techniques available to solve Step 2a. The regularization term is added to ensure positive definiteness of the hessian which is used in establishing fast local convergence \cite{diffenderfer2020local}. We note that PASA \cite{HagerActive} can be used for Step 2a. The choice of $E_{m,0}$ in Step 2a over $E_{m,1}$ is required to establish desirable local convergence properties of NPASA for nondegenerate and degenerate minimizers of problem (\ref{prob:main-nlp}). More details can be found in the local convergence analysis of the multiplier step in the companion paper \cite{diffenderfer2020local}. 

The update to the inequality multiplier in Step 2b is required to achieve local quadratic convergence of the multiplier error $E_{m,1} (\bm{z}_i, \bm{\nu}_i, \bm{\eta}_i')$. While it seems that Step 2a should be able to minimize over $\bm{\nu}$ by fixing $\bm{\eta} = \bm{\eta}_{i-1}'$, there is difficulty in establishing a stability result for the solution in Step 2b that is required for local convergence analysis when assumption (\textbf{SCS}) fails to hold. Hence, the Step 2a and Step 2b structure is designed to ensure that NPASA still satisfies fast local convergence when approaching both degenerate and nondegenerate minimizers of (\ref{prob:main-nlp}). 

Step 2c is a polyhedral constrained optimization problem which can also be solved using PASA \cite{HagerActive}. It should be noted that in Step 2c the polyhedron will always contain a feasible point since the point $\bm{z} = \bm{z}_i$ satisfies all the constraints. Hence, no reformulation of the problem in Step 2c with some perturbation of the equality constraint is required to ensure robustness. A result on the global convergence properties of the multiplier step is found in Section~\ref{subsec:multiplier-step-analysis}. %Lastly, we note that the multiplier step in Algorithm~\ref{alg:cms} requires minimizing $E_{m,1}$ with respect to only the inequality multiplier variables. 
\textbf{NPASA.} As previously noted, NPASA, Algorithm~\ref{alg:npasa}, is split into two phases and contains criterion for branching between the phases that encourage desirable global and local convergence properties. Phase one contains the GS algorithm, updates for the penalty parameter used in GS, and criterion for branching to phase two. Phase two contains the LS algorithm and criterion for branching to phase one. The goal of the branching criterion is to remain in phase two whenever possible as fast local convergence is achieved there but to still ensure global convergence by branching to phase one when sufficient reduction of the error estimator is not occurring in phase two. More details on how these branching conditions achieve this goal can be found in Section~\ref{section:npasa-global-convergence}.

If the starting guess is good enough then NPASA can perform repeated iterations of the LS algorithm without branching to phase one. We continue in this manner until the error estimator $E_1$ is below the stopping tolerance, $\varepsilon$, at which point NPASA terminates and returns the final iterate. However, if the starting guess is not good or if the Newton iteration in the constraint step does not converge quickly then we branch to phase one of Algorithm~\ref{alg:npasa} and run the GS algorithm. Note that upon entering phase one we increase the penalty parameter $q_k$. As $q_k$ increases, $E_c$ typically decreases since the constraint is penalized in the objective function while $E_m$ decreases as iterates approach a stationary point of (\ref{prob:NPASA-phase-1}). 

\begin{center}
\begin{algorithm}[!t]
%	\algsetup{linenosize=\notsotiny}
%	\notsotiny
	\caption{NPASA - Nonlinear Polyhedral Active Set Algorithm} \label{alg:npasa} 
    % Initialize parameters
	{\bfseries Inputs:} Initial guess $(\bm{x}_{0}, \bm{\lambda}_{0}, \bm{\mu}_{0})$ and scalar parameters $\varepsilon > 0$, $\theta \in (0,1)$, $\phi > 1$, $\bar{\lambda} > 0$, $q_0 \geq 1$, $\alpha \in (0, 1]$, $\beta \geq 1$, $\sigma \in (0,1)$, $\tau \in (0,1)$, $p >> 1$, $\delta \in (0,1)$, $\gamma > 0$. \\%, $i_C \in \mathbb{Z}_+$, $i_M \in \mathbb{Z}_+$.\\
	Set $e_0 = E_1 (\bm{x}_{0}, \bm{\lambda}_{0}, \bm{\mu}_{0})$, $k = 0$, and goto phase one.\\
    % Phase one
	\textbf{------ Phase one ------}\\
	Set $q_k \gets \max \left\{ \phi, (e_{k-1})^{-1} \right\}  q_{k-1}$.\\
	\While{$E_{m,1} (\bm{x}_k, \bm{\lambda}_k, \bm{\mu}(\bm{x}_k, 1)) > \varepsilon$}{
	    \emph{Global Step}: $\displaystyle \left( \bm{x}_{k+1}, \bm{\lambda}_{k+1}, \bm{\mu}_{k+1} \right) \gets GS \left( \bm{x}_k, \bm{\lambda}_k, \bm{\mu}_k; \phi, \bar{\lambda}, q_k \right)$\\
		\emph{Update parameters}: $q_{k+1} \gets q_k$, $e_{k+1} \gets \min \left\{ E_1 (\bm{x}_{k+1}, \bm{\lambda}_{k+1}, \bm{\mu}_{k+1}), e_k \right\}$, $k \gets k+1$.\\
		\emph{Check branching criterion}:\\
        \If{$E_{m,1} (\bm{x}_k, \bm{\lambda}_k, \bm{\mu}_k) \leq \theta E_c (\bm{x}_{k-1})$}{
		    \textbf{goto} phase two
        }
    }
    % Phase two 
	\textbf{------ Phase two ------}\\
	\While{$E_{m,1} (\bm{x}_k, \bm{\lambda}_k, \bm{\mu}_k) > \varepsilon$}{
	    \emph{Local Step}: $\displaystyle \left( \bm{z}, \bm{\nu}, \bm{\eta} \right) \gets LS \left( \bm{x}_k, \bm{\lambda}_k, \bm{\mu}_k; \theta, \alpha, \beta, \sigma, \tau, p, \gamma, \delta \right)$\\
		\emph{Check branching criterion}:\\
        \eIf{$E_1 (\bm{z}, \bm{\nu}, \bm{\eta}) > \theta E_1 (\bm{x}_k, \bm{\lambda}_k, \bm{\mu}_k)$}{
           	\textbf{goto} phase one
        }{
            Set $(\bm{x}_{k+1}, \bm{\lambda}_{k+1}, \bm{\mu}_{k+1}) \gets (\bm{z}, \bm{\nu}, \bm{\eta})$ and set $e_{k+1} \gets \min \left\{ E_1 (\bm{x}_{k+1}, \bm{\lambda}_{k+1}, \bm{\mu}_{k+1}), e_k \right\}$\\
            Set $k \gets k+1$.
        }
    }
    {\bfseries Return} $(\bm{x}_k, \bm{\lambda}_k, \bm{\mu}_k)$
\end{algorithm}
\end{center}

Algorithm~\ref{alg:npasa} contains several parameters which we briefly outline here. First, we discuss $\varepsilon$ and $\theta$ as they are used in both phases. $\varepsilon$ is the stopping tolerance used in NPASA and $\theta$ is a scaling parameter used when checking if NPASA should branch to another phase. The choice of $\theta \in (0, 1)$ ensures that NPASA is decreasing the error estimator a sufficient amount at each iteration and is used to establish global convergence of NPASA. Phase one requires the parameters $\phi$, $\bar{\lambda}$, and $q_0$. The parameter $\phi$ is used to increase the penalty $q_k$ each time NPASA enters phase one and $q_0$ is the initial value for the penalty parameter in the augmented Lagrangian. A typical choice would be $\phi = 10$ so that the penalty increases by a factor of 10 each time NPASA branches to phase one. $\bar{\lambda}$ is used to safeguard the method of multipliers approach used in the GS algorithm in phase one. The LS algorithm in phase two requires the parameters $\beta$, $\alpha$, $\sigma$, $\tau$, $p$, $\gamma$, and $\delta$. $\beta$ is used to ensure that the penalty parameter in constraint step does not decrease at each iteration. $\alpha$ is used to determine how much perturbation is permitted at a solution of the constraint step. If $\alpha = 1$ then no perturbation is allowed. As $\alpha$ approaches zero, larger amounts of perturbation are permitted. %A safe and balanced choice is $\alpha = 1/2$. 
More details on the role of $\alpha$ can be found in Lemma~\ref{lem:Lem6.1}. The parameter $\tau$ is used during the line search in the constraint step. $\sigma$ is used to decrease the step length during the line search in the constraint step. A typical choice is $\sigma = 1/2$. $p$ is used to reset the penalty parameter at the start of the multiplier step in phase two. Updates to $p_i$ in the constraint and multiplier steps will be handled determinstically based on convergence analysis so that the user only needs to provide an initial value for $p$. $\gamma$ is used to ensure that the hessian of the problem in step 2a is positive definite.  Lastly, $\delta$ is used to determine if the multiplier error estimator in the multiplier step of LS has sufficiently decreased at each iteration. %Lastly, the positive integers $i_C$ and $i_M$ are the maximum number of iterations that NPASA will remain in the constraint and multiplier steps, respectively.

\begin{center}
\begin{algorithm}[!t]
%	\algsetup{linenosize=\notsotiny}
%	\notsotiny
	\caption{Modified PASA Algorithm for GS Augmented Lagrangian Problem} \label{alg:pasa-aug-lag} 
    % Initialize parameters
	{\bfseries Inputs:} Initial guess $\bm{x}_k$, scalars $\theta \in (0,1)$, $\gamma \in (0,1)$, $\varepsilon > 0$, $q_k > 0$ and vector parameter $\bm{\bar{\lambda}}_k$\\
	Set $\bm{u}_{0} = \bm{x}_{k}$, $\bm{u}_1 = \mathcal{P}_{\Omega} (\bm{u}_0)$, and $i = 1$\\
    % Phase one
	\textbf{------ Phase one ------}\\
    \While{$E_{m,0} (\bm{u}_i, \bm{\bar{\lambda}}_k + 2 q_k \bm{h} (\bm{u}_i), \bm{\mu}(\bm{u}_i,1)) > \varepsilon$}{
        Set $\bm{u}_{i+1} \gets GPA(\bm{u}_i)$ and $i \gets i+1$\\
        \If{$\mathcal{U} (\bm{u}_i) = \emptyset$ \textbf{and} $e_{\scaleto{PASA}{4pt}} (\bm{u}_i) < \theta E_{m,0} (\bm{u}_i, \bm{\bar{\lambda}}_k + 2 q_k \bm{h} (\bm{u}_i), \bm{\mu}(\bm{u}_i,1))$}{
            Set $\theta \gets \gamma \theta$
        }
        \If{$e_{\scaleto{PASA}{4pt}} (\bm{u}_i) \geq \theta E_{m,0} (\bm{u}_i, \bm{\bar{\lambda}}_k + 2 q_k \bm{h} (\bm{u}_i), \bm{\mu}(\bm{u}_i,1))$}{
    	    \textbf{goto} phase two
        }
    }
    % Phase two 
	\textbf{------ Phase two ------}\\
    \While{$E_{m,0} (\bm{u}_i, \bm{\bar{\lambda}}_k + 2 q_k \bm{h} (\bm{u}_i), \bm{\mu}(\bm{u}_i,1)) > \varepsilon$}{
        Set $\bm{u}_{i+1} \gets LCO(\bm{u}_i)$ and $i \gets i+1$\\
        \If{$e_{\scaleto{PASA}{4pt}} (\bm{u}_i) < \theta E_{m,0} (\bm{u}_i, \bm{\bar{\lambda}}_k + 2 q_k \bm{h} (\bm{u}_i), \bm{\mu}(\bm{u}_i,1))$}{
       	    \textbf{goto} phase one
        }
    }
    \textbf{Return} $\bm{u}_i$
\end{algorithm}
\end{center}

\subsection{Modified PASA Algorithm for Augmented Lagrangian Problem in Global Step} \label{sec:mod-pasa-alg}
Here, we provide Algorithm~\ref{alg:pasa-aug-lag} which is PASA with modified stopping and branching criterion designed specifically for the augmented Lagrangian problem found in line 3 of Algorithm~\ref{alg:gs}. %The only change here is that we have modified the stopping criterion. 
Note that the algorithms GPA and LCO used in Algorithm~\ref{alg:pasa-aug-lag} refer to a \emph{Gradient Projection Algorithm} and a \emph{Linearly Constrained Optimizer}, respectively. The original paper on PASA \cite{HagerActive} provides pseudocode for GPA and conditions that the LCO must satisfy in order for PASA to satisfy their convergence results. The details of the GPA and LCO used in Algorithm~\ref{alg:pasa-aug-lag} are the same as specified in the original PASA paper. The value $e_{\scaleto{PASA}{4pt}} (\bm{x})$ corresponds to the \emph{local error estimator for PASA} as defined in \cite{HagerActive}. 
%but is also provided in (\ref{def:e-PASA}) during the analysis of the GS Algorithm using PASA in Section~\ref{subsec:gs-pasa}.

Traditionally, the stopping criterion in PASA at the iterate $\bm{x}$ is given by $E_{\scaleto{PASA}{4pt}} (\bm{x}) \leq \varepsilon$, where $\varepsilon$ is a user provided stopping tolerance value and $E_{\scaleto{PASA}{4pt}} (\bm{x})$ denotes the \emph{global error estimator for PASA} at the primal variable $\bm{x}$ originally defined in \cite{HagerActive}. 
%but also provided in equation (\ref{def:E-PASA}) in Section~\ref{subsec:gs-pasa} during our convergence analysis. 
We have modified this criterion by replacing $E_{\scaleto{PASA}{4pt}}$ with the multiplier error estimator for problem (\ref{prob:main-nlp}), $E_{m,0}$, and intend to use the formula derived during the following analysis, in (\ref{eq:aug-lag-mod.4}), to compute $E_{m,0}$. The motivation for this modified stopping and branching criterion is to reduce the number of assumptions required to ensure convergence of Algorithm~\ref{alg:gs}. While global convergence for Algorithm~\ref{alg:gs} can be established when using PASA with the original stopping criterion, 
%in Section~\ref{subsec:gs-pasa}, 
the number of assumptions required for convergence are reduced when using Algorithm~\ref{alg:pasa-aug-lag}. A more detailed analysis yielding the modifications can be found in Section~\ref{subsec:gs-mod-pasa}.

\section{Convergence Analysis for Phase One of NPASA} \label{section:npasa-phase-one}
We begin our convergence analysis by performing convergence analysis for phase one of NPASA. Motivation for the global step in phase one is to ensure the robustness of the NPASA algorithm. In particular, phase two of NPASA may encounter problems when the linearization of $\bm{h} (\bm{x})$ at the current iterate requires a large perturbation to be feasible resulting in an unsuccessful minimization of $\| \bm{h} (\bm{x}) \|$ in the Constraint Step. In such an instance, phase one serves as a safeguard to move the primal iterate to a new point before attempting phase two again. For phase one we have decided to use augmented Lagrangian techniques. First introduced by Hestenes \cite{Hestenes1969} and Powell \cite{Powell1969} and sometimes referred to as the method of multipliers, augmented Lagrangian methods and their convergence properties have since been well studied \cite{bertsekas1995nonlinear, Luo2008, NandW} and have been successful at solving nonlinear programs in practice using software packages such as ALGENCAN \cite{Andreani2008}, ALGLIB \cite{Bochkanov1999}, LANCELOT \cite{Conn1991, Conn2010}, and MINOS \cite{Murtagh1978}. Hence, the choice of an augmented Lagrangian technique is a suitable safeguard to ensure NPASA will converge to a stationary point in the case that problems arise during the constraint step in phase two.

This section is organized as follows. First, we analyze the method of multipliers scheme in %line 3 of 
Algorithm~\ref{alg:gs}, GS algorithm. In particular, we establish necessary conditions under which a subsequence of iterates of the GS algorithm converge to a stationary point for problem (\ref{prob:main-nlp}). During this discussion, we also develop a closed form expression for computing the multiplier error of the original nonlinear program, problem (\ref{prob:main-nlp}), when solving the augmented Lagrangian problem without requiring the explicit value of the inequality multiplier. 
%In Section~\ref{subsec:gs-pasa}, we provide necessary conditions under which the GS algorithm converges to a stationary point when using PASA \cite{HagerActive} to solve the augmented Lagrangian problem in Algorithm~\ref{alg:gs}. 
In Section~\ref{subsec:gs-mod-pasa}, we provide theoretical analysis for Algorithm~\ref{alg:pasa-aug-lag}, the modified PASA algorithm, which reduces the number of necessary conditions required to establish convergence when solving the minimization problem in the GS algorithm using Algorithm~\ref{alg:pasa-aug-lag}.
%Finally, in Section~\ref{subsec:gs-inequality-multiplier} we discuss how to construct the inequality multiplier required at the end of the GS algorithm.

\subsection{Preliminary Convergence Analysis for GS Algorithm} \label{subsec:gs-conv-analysis}

Consider the minimization problem in %line 3 of 
Algorithm~\ref{alg:gs} at the $k$th iteration of NPASA given by
\begin{align}
\begin{array}{cc}
\displaystyle \min_{\bm{x} \in \mathbb{R}^{n}} & \mathcal{L}_{q_k} (\bm{x}, \bm{\bar{\lambda}}_k) = f (\bm{x}) + \bm{\bar{\lambda}}_k^{\intercal} \bm{h} (\bm{x}) + q_k \| \bm{h} (\bm{x}) \|^2 \\
\text{s.t.} & \bm{r}(\bm{x}) \leq \bm{0}
\end{array} \label{prob:aug-lag-mod}
\end{align}
where $\bm{r} (\bm{x}) = \bm{A} \bm{x} - \bm{b}$ is the linear function from problem (\ref{prob:main-nlp}) and $\bm{\bar{\lambda}}_k = Proj_{[-\bar{\lambda}, \bar{\lambda}]} (\bm{\lambda}_k)$ is known. Now suppose that $\bm{x}_{k+1}$ is a local minimizer for problem (\ref{prob:aug-lag-mod}). Then by the KKT conditions there exists a $\bm{\mu}_{k+1} \geq \bm{0}$ such that
\begin{align}
\bm{0} 
&= \nabla_x \mathcal{L}_{q_k} (\bm{x}, \bm{\bar{\lambda}}_k) + \bm{\mu}_{k+1}^{\intercal} \bm{A}
%&= \nabla f (\bm{x}_{k+1}) + (\bm{\bar{\lambda}}_k + 2 q_k \bm{h} (\bm{x}_{k+1}))^{\intercal} \nabla \bm{h} (\bm{x}_{k+1}) + \bm{\mu}_{k+1}^{\intercal} \bm{A} \\
= \nabla \mathcal{L} (\bm{x}_{k+1}, \bm{\bar{\lambda}}_k + 2 q_k \bm{h} (\bm{x}_{k+1}), \bm{\mu}_{k+1})
\end{align}
and $\bm{\mu}_{k+1}^{\intercal} \bm{r} (\bm{x}_{k+1}) = 0$. Using the formula 
%\begin{align}
$\bm{\lambda}_{k+1} = \bm{\bar{\lambda}}_k + 2 q_k \bm{h} (\bm{x}_{k+1})$
%\end{align}
from line 4 of Algorithm~\ref{alg:gs} it now follows from the definition of $E_{m,0}$ that
%\begin{align}
$E_{m,0} (\bm{x}_{k+1}, \bm{\lambda}_{k+1}, \bm{\mu}_{k+1}) = 0$.
%\end{align}
Hence, we have that the multiplier error is equal to zero at $(\bm{x}_{k+1}, \bm{\lambda}_{k+1}, \bm{\mu}_{k+1})$, where $\bm{\mu}_{k+1}$ is some vector that we do not explicitly know. 
%The global error estimator used for stopping criterion in PASA at a point $\bm{x} \in \Omega$ is given by $E_{\scaleto{PASA}{4pt}} (\bm{x}) = \| \bm{y} (\bm{x},1) - \bm{x} \|$ where 

Now define $\bm{y} (\bm{x},\alpha)$ to be the minimizer for the problem
\begin{align}
\begin{array}{cc}
\displaystyle \min_{\bm{y}} & \displaystyle \frac{1}{2} \| \bm{x} - \alpha \bm{g} (\bm{x}) - \bm{y} \|^2 \\
\text{s.t.} & \bm{r}(\bm{y}) \leq \bm{0}
\end{array} \label{prob:pasa-error}
\end{align} 
for $\alpha > 0$ and where $\bm{g} (\bm{x})$ is the transpose of the gradient of the cost function in 
%the polyhedral constrained problem being solved by PASA. 
problem (\ref{prob:aug-lag-mod}). 
%So if we are solving problem (\ref{prob:aug-lag-mod}) using PASA, note 
Note that in our case we have 
\begin{align}
\bm{g} (\bm{x})^{\intercal} 
&= \nabla_x \mathcal{L}_{q_k} (\bm{x}, \bm{\bar{\lambda}}_k) 
= \nabla f (\bm{x}) + \bm{\bar{\lambda}}_k^{\intercal} \nabla \bm{h} (\bm{x}) + 2 q_k \bm{h} (\bm{x})^{\intercal} \nabla \bm{h} (\bm{x}), \label{eq:aug-lag-mod.0.0}
\end{align} 
however we will interchangeably write $\bm{g} (\bm{x})^{\intercal}$ to simplify notation. 
%When an iterate $\bm{x} \in \Omega$ of PASA results in $E_{\scaleto{PASA}{4pt}} (\bm{x})$ being sufficiently small, where the tolerance is set by the user, then PASA terminates. 
Now fix $\bm{x} \in \Omega$ and consider problem (\ref{prob:pasa-error}). By the KKT conditions, there exists a vector $\bm{\mu}(\bm{x},\alpha)$ such that
\iffalse
\begin{enumerate}[leftmargin=2\parindent,align=left,labelwidth=\parindent,labelsep=7pt]
\item[(\ref{prob:pasa-error}.1)] Gradient of Lagrangian equals zero: $(\bm{y}(\bm{x},\alpha) - \bm{x})^{\intercal} + \alpha \bm{g} (\bm{x})^{\intercal} + \bm{\mu}(\bm{x}, \alpha)^{\intercal} \bm{A} = \bm{0}$
\item[(\ref{prob:pasa-error}.2)] Satisfies inequality constraints: $\bm{r}(\bm{y}(\bm{x},\alpha)) \leq \bm{0}$
\item[(\ref{prob:pasa-error}.3)] Nonnegativity of inequality multipliers: $\bm{\mu} (\bm{x},\alpha) \geq \bm{0}$
\item[(\ref{prob:pasa-error}.4)] Complementary slackness: $\mu_i (\bm{x},\alpha) r_i(\bm{y}(\bm{x},\alpha)) = 0$, for $1 \leq i \leq m$.
\end{enumerate}
\fi
\begin{align}
&\text{Gradient of Lagrangian equals zero:} \ (\bm{y}(\bm{x},\alpha) - \bm{x})^{\intercal} + \alpha \bm{g} (\bm{x})^{\intercal} + \bm{\mu}(\bm{x}, \alpha)^{\intercal} \bm{A} = \bm{0},
\label{prob:pasa-error.1} \\
&\text{Satisfies inequality constraints:} \ \bm{r}(\bm{y}(\bm{x},\alpha)) \leq \bm{0}, \label{prob:pasa-error.2} \\
&\text{Nonnegativity of inequality multipliers:} \ \bm{\mu} (\bm{x},\alpha) \geq \bm{0}, \label{prob:pasa-error.3} \\
&\text{Complementary slackness:} \  \mu_i (\bm{x},\alpha) r_i(\bm{y}(\bm{x},\alpha)) = 0, \ \text{for} \ 1 \leq i \leq m. \label{prob:pasa-error.4}
\end{align}
Our goal is to use the KKT conditions (\ref{prob:pasa-error.1}) -- (\ref{prob:pasa-error.4}) for problem (\ref{prob:pasa-error}) to derive a formula for the multiplier error of our original problem, $E_{m,0}$, in terms of the current 
%iterate of PASA. 
primal iterate, $\bm{x}$. 
%Then we will show that we can bound this expression for $E_{m,0}$ in terms of the global error estimator of PASA. 
This idea will be useful in establishing a theoretical convergence result for phase one of NPASA and for later establishing practical convergence results using PASA and modified PASA. This is summarized in the following result.
\begin{lemma} \label{lem:Em0-Epasa-comparison}
Suppose $\bm{x} \in \Omega$. Then for each $\alpha > 0$ there exists an inequality multiplier $\bm{\mu} (\bm{x},\alpha)$ such that 
\begin{align}
E_{m,0} (\bm{x}, \bm{\bar{\lambda}}_k + 2 q_k \bm{h} (\bm{x}), \bm{\mu} (\bm{x},\alpha)/\alpha) 
&= - \bm{g}(\bm{x})^{\intercal} (\bm{y}(\bm{x},\alpha) - \bm{x}) + \left(\frac{1}{\alpha^2} - \frac{1}{\alpha} \right) \| \bm{y}(\bm{x},\alpha) - \bm{x} \|^2. \label{eq:aug-lag-mod.4}
\end{align}
In particular, for $\alpha = 1$ we have
\begin{align}
\| \bm{y}(\bm{x},1) - \bm{x} \|^2 
&\leq E_{m,0} (\bm{x}, \bm{\bar{\lambda}}_k + 2 q_k \bm{h} (\bm{x}), \bm{\mu} (\bm{x},1)) 
\leq \| \nabla_x \mathcal{L}_{q_k} (\bm{x}, \bm{\bar{\lambda}}_k) \| \| \bm{y}(\bm{x}, 1) - \bm{x} \|. \label{eq:aug-lag-mod.5}
\end{align}
%and $E_{m,0} (\bm{x}, \bm{\bar{\lambda}}_k + 2 q_k \bm{h} (\bm{x}), \bm{\mu} (\bm{x},1)) = 0$ if and only if $\bm{x}$ is a stationary point for problem (\ref{prob:aug-lag-mod}).
\end{lemma}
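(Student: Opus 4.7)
The plan is to derive both the identity and the two-sided bound directly from the KKT system (\ref{prob:pasa-error.1})--(\ref{prob:pasa-error.4}) together with the linearity of $\bm{r}$, treating $\bm{\mu}(\bm{x},\alpha)$ as the inequality multiplier supplied by the KKT conditions for the strictly convex problem (\ref{prob:pasa-error}) (which exists since $\Omega$ is polyhedral and $\bm{x} \in \Omega$ gives a feasible point).

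First I would solve (\ref{prob:pasa-error.1}) for $\bm{\mu}(\bm{x},\alpha)^{\intercal}\bm{A}$, giving
\begin{align*}
\bm{\mu}(\bm{x},\alpha)^{\intercal}\bm{A} \;=\; -\alpha\,\bm{g}(\bm{x})^{\intercal} - (\bm{y}(\bm{x},\alpha) - \bm{x})^{\intercal}.
\end{align*}
Substituting the multiplier $\bm{\mu}(\bm{x},\alpha)/\alpha$ into $\nabla_x\mathcal{L}(\bm{x},\bm{\bar{\lambda}}_k+2q_k\bm{h}(\bm{x}),\cdot) = \bm{g}(\bm{x})^{\intercal}+\frac{1}{\alpha}\bm{\mu}(\bm{x},\alpha)^{\intercal}\bm{A}$ produces the clean identity $\nabla_x\mathcal{L} = -\frac{1}{\alpha}(\bm{y}(\bm{x},\alpha)-\bm{x})^{\intercal}$, so the gradient-of-Lagrangian piece of $E_{m,0}$ equals $\frac{1}{\alpha^2}\|\bm{y}(\bm{x},\alpha)-\bm{x}\|^2$.

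Next I would handle the complementary-slackness piece $-\frac{1}{\alpha}\bm{\mu}(\bm{x},\alpha)^{\intercal}\bm{r}(\bm{x})$. Since $\bm{r}$ is linear, $\bm{r}(\bm{x}) = \bm{r}(\bm{y}(\bm{x},\alpha)) + \bm{A}(\bm{x}-\bm{y}(\bm{x},\alpha))$, and (\ref{prob:pasa-error.4}) kills the first term, so
\begin{align*}
\bm{\mu}(\bm{x},\alpha)^{\intercal}\bm{r}(\bm{x}) \;=\; \bm{\mu}(\bm{x},\alpha)^{\intercal}\bm{A}(\bm{x}-\bm{y}(\bm{x},\alpha)) \;=\; \alpha\,\bm{g}(\bm{x})^{\intercal}(\bm{y}(\bm{x},\alpha)-\bm{x}) + \|\bm{y}(\bm{x},\alpha)-\bm{x}\|^2,
\end{align*}
using the expression for $\bm{\mu}(\bm{x},\alpha)^{\intercal}\bm{A}$ above. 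Adding the two contributions gives exactly (\ref{eq:aug-lag-mod.4}).

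For (\ref{eq:aug-lag-mod.5}), I specialize to $\alpha=1$, where the coefficient $\frac{1}{\alpha^2}-\frac{1}{\alpha}$ vanishes and the identity reduces to $E_{m,0} = -\bm{g}(\bm{x})^{\intercal}(\bm{y}(\bm{x},1)-\bm{x})$. The upper bound is then immediate from Cauchy--Schwarz together with the definition of $\bm{g}(\bm{x})$ in (\ref{eq:aug-lag-mod.0.0}). For the lower bound I would invoke the fact that $\bm{y}(\bm{x},1)$ is the Euclidean projection of $\bm{x}-\bm{g}(\bm{x})$ onto $\Omega$; testing the projection inequality against the feasible point $\bm{x}\in\Omega$ yields $\|\bm{y}(\bm{x},1)-\bm{x}\|^2 \leq -\bm{g}(\bm{x})^{\intercal}(\bm{y}(\bm{x},1)-\bm{x})$. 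The only subtle step is recognizing that the KKT system for (\ref{prob:pasa-error}) is exactly the projection characterization, so I expect no real obstacle — this lemma is essentially a bookkeeping calculation that leverages the linearity of $\bm{r}$ to absorb the inequality multiplier without ever computing $\bm{\mu}(\bm{x},\alpha)$ explicitly.
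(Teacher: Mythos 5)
Your proposal is correct and follows essentially the same route as the paper's proof: rearranging the KKT stationarity condition (\ref{prob:pasa-error.1}) to evaluate the gradient-of-Lagrangian term as $\frac{1}{\alpha^2}\|\bm{y}(\bm{x},\alpha)-\bm{x}\|^2$, then using complementary slackness (\ref{prob:pasa-error.4}) together with the linearity of $\bm{r}$ to evaluate $-\frac{1}{\alpha}\bm{\mu}(\bm{x},\alpha)^{\intercal}\bm{r}(\bm{x})$, exactly as in the paper's derivation of (\ref{eq:aug-lag-mod.1}) and (\ref{eq:aug-lag-mod.2}). The only cosmetic difference is that you re-derive the lower bound from the projection variational inequality directly, whereas the paper cites property P6 of Proposition 2.1 in \cite{Hager2006} --- which is precisely that inequality.
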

\begin{proof}
Let $\bm{x} \in \Omega$. By recalling that the multiplier error estimator $E_{m,0}$ for problem (\ref{prob:main-nlp}) is given by $E_{m,0} (\bm{x}, \bm{\lambda}, \bm{\mu}) = \| \nabla f (\bm{x}) + \bm{\lambda}^{\intercal} \nabla \bm{h} (\bm{x}) + \bm{\mu}^{\intercal} \bm{A} \|^2 - \bm{\mu}^{\intercal} \bm{r} (\bm{x})$, it follows that 
\begin{align}
E_{m,0} (\bm{x}, \bm{\bar{\lambda}}_k + 2 q_k \bm{h} (\bm{x}), \bm{\mu}) 
&= \| \bm{g} (\bm{x})^{\intercal} + \bm{\mu}^{\intercal} \bm{A} \|^2 - \bm{\mu}^{\intercal} \bm{r} (\bm{x}). \label{eq:aug-lag-mod.0}
\end{align} 
Now note that from (\ref{prob:pasa-error.1}) we have
%\begin{align}
$\bm{g} (\bm{x})^{\intercal} + \frac{1}{\alpha} \bm{\mu} (\bm{x},\alpha)^{\intercal} \bm{A} = \frac{1}{\alpha}(\bm{x} - \bm{y}(\bm{x},\alpha))^{\intercal}$.
%\end{align}
Taking the norm of both sides and squaring yields
\begin{align}
\left\| \bm{g} (\bm{x})^{\intercal} + \frac{1}{\alpha} \bm{\mu} (\bm{x},\alpha)^{\intercal} \bm{A} \right\|^2 = \frac{1}{\alpha^2} \| \bm{y}(\bm{x},\alpha) - \bm{x} \|^2. \label{eq:aug-lag-mod.1}
\end{align}
Additionally, right multiplying both sides of (\ref{prob:pasa-error.1}) by $\frac{1}{\alpha} (\bm{y}(\bm{x},\alpha) - \bm{x})$ and using (\ref{prob:pasa-error.4}) yields
\begin{align}
- \frac{1}{\alpha} \bm{\mu}(\bm{x},\alpha)^{\intercal} \bm{r}(\bm{x}) = - \frac{1}{\alpha} \| \bm{y}(\bm{x},\alpha) - \bm{x} \|^2 - \bm{g}(\bm{x})^{\intercal} (\bm{y}(\bm{x},\alpha) - \bm{x}). \label{eq:aug-lag-mod.2}
\end{align}
%Note that as $\bm{x} \in \Omega$ we have that $-\bm{\mu}(\bm{x})^{\intercal} \bm{A} \bm{x} \geq 0$ which implies that $\nabla_x \mathcal{L}_{q} (\bm{x}, \bm{\bar{\lambda}}_k) (\bm{y}(\bm{x}) - \bm{x}) \leq \bm{0}$. 
Combining (\ref{eq:aug-lag-mod.0}), (\ref{eq:aug-lag-mod.1}), and (\ref{eq:aug-lag-mod.2}) yields (\ref{eq:aug-lag-mod.4}). 

Next, from P6 in Proposition 2.1 of \cite{Hager2006} we have that 
%\begin{align}
$- \bm{g} (\bm{x})^{\intercal} (\bm{y}(\bm{x},\alpha) - \bm{x}) \geq \frac{1}{\alpha} \| \bm{y}(\bm{x},\alpha) - \bm{x} \|^2$,
%\end{align}
for all $\bm{x} \in \Omega$. Combining this fact with (\ref{eq:aug-lag-mod.4}) yields a lower bound for the multiplier error
\begin{align}
E_{m,0} (\bm{x}, \bm{\bar{\lambda}}_k + 2 q_k \bm{h} (\bm{x}), \bm{\mu} (\bm{x},\alpha)/\alpha) 
\geq \frac{1}{\alpha^2} \| \bm{y}(\bm{x},\alpha) - \bm{x} \|^2. \label{eq:aug-lag-mod.4.1}
\end{align}
In particular, by taking $\alpha = 1$ in (\ref{eq:aug-lag-mod.4}) and recalling (\ref{eq:aug-lag-mod.0.0}) we have
\begin{align}
E_{m,0} (\bm{x}, \bm{\bar{\lambda}}_k + 2 q_k \bm{h} (\bm{x}), \bm{\mu} (\bm{x},1)) 
= - \nabla_x \mathcal{L}_{q_k} (\bm{x}, \bm{\bar{\lambda}}_k) (\bm{y}(\bm{x},1) - \bm{x}). \label{eq:aug-lag-mod.4.2}
\end{align}
Lastly, (\ref{eq:aug-lag-mod.4.1}) and (\ref{eq:aug-lag-mod.4.2}) 
%with the definition of $E_{\scaleto{PASA}{4pt}} (\bm{x})$ and 
together with
Cauchy-Schwarz yield (\ref{eq:aug-lag-mod.5}). 
%
%Lastly, as $E_{\scaleto{PASA}{4pt}} (\bm{x}) = 0$ if and only if $\bm{x}$ is a stationary point \cite{HagerActive}, we conclude from (\ref{eq:aug-lag-mod.5}) that $E_{m,0} (\bm{x}, \bm{\bar{\lambda}}_k + 2 q_k \bm{h} (\bm{x}), \bm{\mu} (\bm{x},1)) = 0$ if and only if $\bm{x}$ is a stationary point. 
\end{proof}

Hence, in (\ref{eq:aug-lag-mod.4}) we have derived a closed form expression for the multiplier error estimator for the general nonlinear 
%program in terms of the current iterate of PASA and a known KKT equality multiplier.
program, problem (\ref{prob:main-nlp}), in terms of the current primal iterate, $\bm{x}$, and a known KKT equality multiplier. 
%Additionally, in (\ref{eq:aug-lag-mod.5}) upper and lower bounds for the multiplier error estimator for the general nonlinear program were derived in terms of the global error estimator used by PASA. Thus, we can compute and bound the value of the multiplier error estimator $E_{m,0} (\bm{x}_k, \bm{\lambda}_k, \bm{\mu}(\bm{x}_k,\alpha))$ by using information already computed by PASA without requiring the inequality multiplier vector $\bm{\mu}(\bm{x}_k,\alpha)$. We now provide the following result which will be useful to note for practical implementations of this approach for augmented Lagrangian problems. The result follows immediately from (\ref{eq:aug-lag-mod.5}) and the definition of the dot product.
The inequality in (\ref{eq:aug-lag-mod.5}) will be useful in establishing the following convergence result. %for the GS algorithm.

\begin{theorem} \label{thm:pasa-aug-lag-global-conv}
Suppose $\Omega$ is compact and that $f, \bm{h} \in \mathcal{C}^1 (\Omega)$. Suppose the following are satisfied:
\begin{enumerate}
\item $\{ \bm{x}_k \}_{k=0}^{\infty}$ is a sequence with $\bm{x}_k \in \Omega$ and $E_{m,0} (\bm{x}_k, \bm{\lambda}_k, \bm{\mu}(\bm{x}_k,1)) \leq \theta E_c (\bm{x}_k)$, for all $k \geq 0$.
\item $\{ \bm{\lambda}_k \}_{k=0}^{\infty}$ is updated using the formula for $\bm{\lambda}'$ in line 4 of Algorithm~\ref{alg:gs} (GS algorithm).
\item $\{ q_k \}_{k=0}^{\infty}$ is a sequence of real numbers such that $q_k \to \infty$ as $k \to \infty$.
\end{enumerate}
Then any subsequence of $\{ \bm{x}_k \}_{k=0}^{\infty}$ with limit point $\bm{x}^*$ such that assumption (\textbf{LICQ}) holds at $\bm{x}^*$ satisfies $E_1 (\bm{x}^*, \bm{\lambda}^*, \bm{\mu}(\bm{x}^*,1)) = E_0 (\bm{x}^*, \bm{\lambda}^*, \bm{\mu}(\bm{x}^*,1)) = 0$, where $\bm{\lambda}^*$ is the limit of the corresponding subsequence for $\{ \bm{\lambda}_k \}$.
\end{theorem}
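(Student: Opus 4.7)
The plan is to show $E_0(\bm{x}^*, \bm{\lambda}^*, \bm{\mu}(\bm{x}^*,1)) = 0$; Lemma~\ref{lem:E0-E1-relationship} then yields the corresponding conclusion for $E_1$. Since $E_0^2 = E_{m,0} + E_c$, I need both the constraint and multiplier portions to vanish at the limit. The first piece, $E_c(\bm{x}^*) = \|\bm{h}(\bm{x}^*)\|^2 = 0$, is the easier step: the update in line 4 of Algorithm~\ref{alg:gs} gives $\bm{\lambda}_{k+1} - \bm{\bar{\lambda}}_k = 2q_k \bm{h}(\bm{x}_{k+1})$, and since $\bm{\bar{\lambda}}_k \in [-\bar{\lambda},\bar{\lambda}]^\ell$ is uniformly bounded, along the given subsequence with $\bm{\lambda}_{k_j} \to \bm{\lambda}^*$ (after extracting a further subsequence along which $\bm{\bar{\lambda}}_{k_j-1}$ also converges), the left side stays bounded while $q_{k_j-1} \to \infty$. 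This forces $\bm{h}(\bm{x}_{k_j}) \to \bm{0}$, hence $\bm{h}(\bm{x}^*) = \bm{0}$ by continuity.

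For the multiplier error, I would apply Lemma~\ref{lem:Em0-Epasa-comparison} with $\alpha = 1$ together with hypothesis 1 of the theorem to obtain
\begin{align*}
\|\bm{y}(\bm{x}_{k_j},1) - \bm{x}_{k_j}\|^2 \leq E_{m,0}(\bm{x}_{k_j}, \bm{\lambda}_{k_j}, \bm{\mu}(\bm{x}_{k_j},1)) \leq \theta\|\bm{h}(\bm{x}_{k_j})\|^2 \to 0,
\end{align*}
so $\bm{y}(\bm{x}_{k_j},1) \to \bm{x}^*$. A key algebraic simplification is that the multiplier update $\bm{\lambda}_{k_j} = \bm{\bar{\lambda}}_{k_j-1} + 2q_{k_j-1}\bm{h}(\bm{x}_{k_j})$ absorbs the penalty term in the augmented Lagrangian gradient, so the gradient appearing in (\ref{prob:pasa-error}) at $\bm{x}_{k_j}$ collapses to $\bm{g}(\bm{x}_{k_j})^\intercal = \nabla f(\bm{x}_{k_j}) + \bm{\lambda}_{k_j}^\intercal \nabla \bm{h}(\bm{x}_{k_j})$. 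Substituting this into condition (\ref{prob:pasa-error.1}) rewrites the projection KKT system at $\bm{x}_{k_j}$ as
\begin{align*}
\nabla_x \mathcal{L}(\bm{x}_{k_j},\bm{\lambda}_{k_j},\bm{\mu}(\bm{x}_{k_j},1))^\intercal = \bm{x}_{k_j} - \bm{y}(\bm{x}_{k_j},1),
\end{align*}
alongside $\bm{\mu}(\bm{x}_{k_j},1) \geq \bm{0}$ and complementary slackness at $\bm{y}(\bm{x}_{k_j},1)$.

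The main obstacle is establishing boundedness of $\{\bm{\mu}(\bm{x}_{k_j},1)\}$; this is where LICQ at $\bm{x}^*$ becomes indispensable. Because $\bm{y}(\bm{x}_{k_j},1) \to \bm{x}^*$, continuity yields $r_i(\bm{y}(\bm{x}_{k_j},1)) < 0$ for every $i \notin \mathcal{A}(\bm{x}^*)$ and all sufficiently large $j$, so complementary slackness forces $\mu_i(\bm{x}_{k_j},1) = 0$ for such indices. The displayed equation then reduces to a linear system for $\bm{\mu}(\bm{x}_{k_j},1)_{\mathcal{A}(\bm{x}^*)}$ with coefficient matrix $\bm{A}_{\mathcal{A}(\bm{x}^*)}^\intercal$, whose rows inherit linear independence from LICQ. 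Hence $\bm{A}_{\mathcal{A}(\bm{x}^*)}\bm{A}_{\mathcal{A}(\bm{x}^*)}^\intercal$ is invertible and $\bm{\mu}(\bm{x}_{k_j},1)_{\mathcal{A}(\bm{x}^*)}$ is recovered by a bounded linear operator applied to a bounded right-hand side. Passing to a further subsequence yields $\bm{\mu}(\bm{x}_{k_j},1) \to \bm{\mu}^* \geq \bm{0}$ with $\mu_i^* = 0$ for $i \notin \mathcal{A}(\bm{x}^*)$; taking limits in the displayed equation gives $\nabla_x \mathcal{L}(\bm{x}^*,\bm{\lambda}^*,\bm{\mu}^*) = \bm{0}$ and $\bm{\mu}^{*\intercal}\bm{r}(\bm{x}^*) = 0$, so $E_{m,0}(\bm{x}^*,\bm{\lambda}^*,\bm{\mu}^*) = 0$. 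Uniqueness of the KKT multipliers under LICQ identifies $\bm{\mu}^* = \bm{\mu}(\bm{x}^*,1)$, completing the argument.
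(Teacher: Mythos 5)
Your proposal is correct in substance but deploys (\textbf{LICQ}) differently from the paper. The paper's proof uses it once, and earlier: starting from (\ref{prob:pasa-error.1}) with $\alpha = 1$ and the absorbing identity for the multiplier update, it forms the stacked system $\begin{bmatrix} \nabla \bm{h}(\bm{x}_k)^{\intercal} \ \vert \ \bm{A}_{\mathcal{A}}^{\intercal} \end{bmatrix}$ applied to the pair $(\bm{\lambda}_k, \bm{\mu}_{\mathcal{A}}(\bm{x}_k,1))$ and uses full column rank near $\bm{x}^*$ to bound the equality and inequality multipliers \emph{jointly}, as in (\ref{eq:phase-one-convergence.2}); boundedness of $\bm{\lambda}_k$ then combines with the update formula and $q_k \to \infty$ to give $\|\bm{h}(\bm{x}_k)\| \leq \Lambda/(2q_k) \to 0$ in (\ref{eq:phase-one-convergence.3}), and the conclusion follows by passing $E_0^2 = E_{m,0} + E_c \leq (1+\theta)E_c$ to the limit. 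You instead read the theorem's phrase ``$\bm{\lambda}^*$ is the limit of the corresponding subsequence'' as granting convergence of $\bm{\lambda}_{k_j}$, which makes your derivation of $E_c(\bm{x}^*) = 0$ LICQ-free, and you spend (\textbf{LICQ}) only on the inequality multiplier: complementary slackness at $\bm{y}(\bm{x}_{k_j},1) \to \bm{x}^*$ zeroes the components off $\mathcal{A}(\bm{x}^*)$, and the fixed full-column-rank matrix $\bm{A}_{\mathcal{A}(\bm{x}^*)}^{\intercal}$ recovers the active components boundedly. Both routes are sound, and your explicit passage to the limiting KKT system is more careful than the paper's final step, which concludes $E_0(\bm{x}^*, \bm{\lambda}^*, \bm{\mu}(\bm{x}^*,1)) = 0$ by continuity without isolating the limit multiplier.

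Two caveats. First, the paper's joint bound does real work that your reading skips: it \emph{derives} boundedness of $\{\bm{\lambda}_k\}$ from (\textbf{LICQ}), so the existence of $\bm{\lambda}^*$ along a further subsequence is a conclusion rather than a hypothesis. If the theorem is meant to assert, not assume, convergence of the multiplier subsequence, your step obtaining $\bm{h}(\bm{x}_{k_j}) \to \bm{0}$ becomes circular, since it leans on boundedness of $\bm{\lambda}_{k_j}$; the repair is exactly the paper's stacked-matrix estimate (\ref{eq:phase-one-convergence.1}). Second, your closing identification $\bm{\mu}^* = \bm{\mu}(\bm{x}^*,1)$ tacitly requires $\bm{y}(\bm{x}^*,1) = \bm{x}^*$, i.e., that $\bm{x}^*$ is stationary for the limiting projection problem, and indeed requires pinning down which gradient defines that problem at $\bm{x}^*$ (since $\bm{g}$ depends on $q_k$ and $\bm{\bar{\lambda}}_k$). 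Your own absorbing identity $\bm{g}(\bm{x}_{k_j})^{\intercal} = \nabla f(\bm{x}_{k_j}) + \bm{\lambda}_{k_j}^{\intercal} \nabla \bm{h}(\bm{x}_{k_j})$ resolves this: it converges along the subsequence, so continuity of the Euclidean projection onto $\Omega$ together with $\|\bm{x}_{k_j} - \bm{y}(\bm{x}_{k_j},1)\| \to 0$ gives $\bm{y}(\bm{x}^*,1) = \bm{x}^*$, after which uniqueness under (\textbf{LICQ}) applies as you claim. Spelling this out would close the only real looseness in your argument; the paper glosses the same point.
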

\begin{proof}
Let $\{ \bm{x}_k \}_{k=0}^{\infty}$ be a subsequence with limit point $\bm{x}^*$ and let $\{ \bm{\lambda}_k \}_{k=0}^{\infty}$ and $\{ \bm{\mu} (\bm{x}_k,1) \}_{k=0}^{\infty}$ be corresponding subsequences of multipliers. For simplicity, let $\mathcal{A} \:= \mathcal{A}(\bm{x}^*)$. First, we show that $\{ \bm{\lambda}_k \}_{k=0}^{\infty}$ and $\{ \bm{\mu} (\bm{x}_k,1) \}_{k=0}^{\infty}$ are uniformly bounded for $k$ sufficiently large. Rearranging the terms in equation (\ref{prob:pasa-error.1}) with $\alpha = 1$ combined with the fact that %As $\bm{x}_k \to \bm{x}^*$, for sufficiently large $k$ it follows that 
$\mathcal{A}(\bm{x}_k) \subseteq \mathcal{A}(\bm{x}^*)$ for sufficiently large $k$, we have that 
\begin{align}
\begin{bmatrix}
\nabla \bm{h} (\bm{x}_k)^{\intercal} \ \Big\vert \ \bm{A}_{\mathcal{A}}^{\intercal}
\end{bmatrix} \begin{bmatrix}
\bm{\lambda}_k \\
\bm{\mu}_{\mathcal{A}} (\bm{x}_k,1)
\end{bmatrix}
= \bm{x}_k - \bm{y}(\bm{x}_k,1) - \nabla f (\bm{x}_k)^{\intercal}
\end{align}
By our hypothesis that (\textbf{LICQ}) holds at $\bm{x}^*$, there exists a $N$ such that the matrix $\begin{bmatrix} \nabla \bm{h} (\bm{x}_k)^{\intercal} \ \vert \ \bm{A}_{\mathcal{A}}^{\intercal} \end{bmatrix}$ is of full column rank, for all $k \geq N$. Hence, there exists a constant $M$ such that
\begin{align}
\left\| \begin{bmatrix}
\bm{\lambda}_k \\
\bm{\mu}_{\mathcal{A}} (\bm{x}_k,1)
\end{bmatrix} \right\|
\leq M \| \bm{x}_k - \bm{y}(\bm{x}_k,1) - \nabla f (\bm{x}_k)^{\intercal} \|, \label{eq:phase-one-convergence.1}
\end{align}
for $k$ sufficiently large. From property P6 in Proposition 2.1 of \cite{Hager2006} we have
%\begin{align}
$\| \bm{x}_k - \bm{y}(\bm{x}_k,1) \|^2 
\leq \nabla_x \mathcal{L}_{q} (\bm{x}_k, \bm{\bar{\lambda}}_{k-1})^{\intercal} (\bm{x}_k - \bm{y}(\bm{x}_k,1))$.
%\end{align}
Hence, using the hypothesis that $E_{m,0} (\bm{x}_k, \bm{\lambda}_k, \bm{\mu}(\bm{x}_k,1)) \leq \theta E_c (\bm{x}_k)$ together with (\ref{eq:aug-lag-mod.5}) it follows that
%\begin{align}
$\| \bm{x}_k - \bm{y}(\bm{x}_k,1) \|^2 
\leq E_{m,0} (\bm{x}_k, \bm{\lambda}_k, \bm{\mu} (\bm{x}_k,1)) 
\leq E_c (\bm{x}_k)
= \| \bm{h} (\bm{x}_k) \|^2$.
%\end{align}
%As $E_c (\bm{x}_k) = \| \bm{h} (\bm{x}_k) \|^2$, 
Combining this with the inequality in (\ref{eq:phase-one-convergence.1}) yields
\begin{align}
\left\| \begin{bmatrix}
\bm{\lambda}_k \\
\bm{\mu}_{\mathcal{A}} (\bm{x}_k,1)
\end{bmatrix} \right\|
\leq M \left( \| \bm{h} (\bm{x}_k) \| + \| \nabla f (\bm{x}_k) \| \right), \label{eq:phase-one-convergence.2}
\end{align}
for $k$ sufficiently large. As $f, \bm{h} \in \mathcal{C}^1$ and $\mu_j(\bm{x}_k, 1) = 0$ for $j \not\in \mathcal{A}$, it follows from (\ref{eq:phase-one-convergence.2}) that the sequences $\{ \bm{\lambda}_k \}_{k=0}^{\infty}$ and $\{ \bm{\mu}(\bm{x}_k,1) \}_{k=0}^{\infty}$ are uniformly bounded for $k$ sufficiently large. As the sequence $\{ \bm{\bar{\lambda}}_k \}_{k=0}^{\infty}$ is uniformly bounded by construction, there exists a constant $\Lambda$ such that $\| \bm{\lambda}_{k} - \bm{\bar{\lambda}}_{k-1} \| \leq \Lambda$ for all $k$ sufficiently large. Now using the update formula $\bm{\lambda}_{k} = \bm{\bar{\lambda}}_{k-1} + 2 q_k \bm{h} (\bm{x}_{k})$, we have that
\begin{align}
\| \bm{h} (\bm{x}_{k}) \| 
= \frac{\| \bm{\lambda}_{k} - \bm{\bar{\lambda}}_{k-1} \|}{2 q_k} 
\leq \frac{\Lambda}{2 q_k}, \label{eq:phase-one-convergence.3}
\end{align}
for all $k$ sufficiently large. So (\ref{eq:phase-one-convergence.3}) together with the hypothesis that $q_k \to \infty$ as $k \to \infty$, yields that
\begin{align}
\lim_{k \to \infty} E_c (\bm{x}_{k}) = \lim_{k \to \infty} \| \bm{h} (\bm{x}_{k}) \|^2 \leq \lim_{k \to \infty} \frac{\Lambda^2}{4 q_k^2} = 0.
\end{align}
As $E_0 (\bm{x}_k, \bm{\lambda}_k, \bm{\mu} (\bm{x}_k,1))^2 = E_{m,0} (\bm{x}_k, \bm{\lambda}_k, \bm{\mu} (\bm{x}_k,1)) + E_c (\bm{x}_k)$ and $E_{m,0} (\bm{x}_k, \bm{\lambda}_k, \bm{\mu} (\bm{x}_k,1)) \leq \theta E_c (\bm{x}_k)$ for all $k$ by hypothesis, it now follows that $E_0 (\bm{x}^*, \bm{\lambda}^*, \bm{\mu} (\bm{x}^*,1)) = 0$. Lastly, from Lemma~\ref{lem:E0-E1-relationship} it follows that $E_1 (\bm{x}^*, \bm{\lambda}^*, \bm{\mu} (\bm{x}^*,1)) = 0$ which concludes the proof.
\end{proof}

\subsection{Global Convergence of GS Algorithm using Modified PASA} \label{subsec:gs-mod-pasa}
We now focus on establishing a global convergence result for the augmented Lagrangian problem when using Algorithm~\ref{alg:pasa-aug-lag}, PASA \cite{Hager2016} with modified stopping criterion. 
%To this end, we determine slight modifications to the PASA algorithm, Algorithm 2 in \cite{Hager2016}. 
Algorithm~\ref{alg:pasa-aug-lag} is obtained by replacing all instances of $E_{\scaleto{PASA}{4pt}} (\bm{x})$ in the original PASA with $E_{m,0} (\bm{x}, \bm{\bar{\lambda}}_k + 2 q_k \bm{h} (\bm{x}), \bm{\mu}(\bm{x},1)) = -\nabla \mathcal{L}_q (\bm{x}, \bm{\bar{\lambda}}_k) (\bm{y} (\bm{x},1) - \bm{x})$. First, we provide a necessary lemma that considers the relationship of the multiplier error estimator $E_{m,0} (\bm{x}, \bm{\bar{\lambda}}_k + 2 q_k \bm{h} (\bm{x}), \bm{\mu}(\bm{x},\alpha)/\alpha)$ to $E_{m,0} (\bm{x}, \bm{\bar{\lambda}}_k + 2 q_k \bm{h} (\bm{x}), \bm{\mu}(\bm{x},\beta)/\beta)$ for various values of $0 < \alpha \leq \beta$. 
\begin{lemma} \label{thm:aug-lag-mult-error}
For a given $\bm{x}$, let $\bm{\mu} (\bm{x}, \alpha)$ satisfy the KKT conditions in (\ref{prob:pasa-error.1}) -- (\ref{prob:pasa-error.4}). Then
\iffalse
\begin{enumerate}[leftmargin=2\parindent,align=left,labelwidth=\parindent,labelsep=7pt]
\item[(\ref{thm:aug-lag-mult-error}.1)] For $0 < \alpha \leq \beta$, $-\nabla \mathcal{L}_{q_k} (\bm{x}, \bm{\bar{\lambda}}_k) (\bm{y} (\bm{x},\alpha) - \bm{x}) 
\leq -\nabla_x \mathcal{L}_{q_k} (\bm{x}, \bm{\bar{\lambda}}_k) (\bm{y} (\bm{x},\beta) - \bm{x})$. \vspace{1mm}
%%%%%%%%%%%%%%%% NOT NEEDED FOR PAPER %%%%%%%%%%%%%%%%%%%%%%
\iffalse 
\item[(\ref{thm:aug-lag-mult-error}.2old)] For $0 < \alpha \leq \beta$, $- \alpha \nabla \mathcal{L}_q (\bm{x}, \bm{\bar{\lambda}}_k) (\bm{y} (\bm{x},\beta) - \bm{x}) 
\leq - \beta \nabla \mathcal{L}_q (\bm{x}, \bm{\bar{\lambda}}_k) (\bm{y} (\bm{x},\alpha) - \bm{x})$. \vspace{1mm}
\fi
%%%%%%%%%%%%%%%%%%%%%%%%%%%%%%%%%%%%%%%%%%%%%%%%%%%%%%%%%%%%
\item[(\ref{thm:aug-lag-mult-error}.2)] If $0 < \alpha \leq \beta \leq 1$ then 
\begin{align*}
E_{m,0} (\bm{x}, \bm{\bar{\lambda}}_k + 2 q_k \bm{h} (\bm{x}), \bm{\mu}(\bm{x},\beta)/\beta) 
\leq E_{m,0} (\bm{x}, \bm{\bar{\lambda}}_k + 2 q_k \bm{h} (\bm{x}), \bm{\mu}(\bm{x},\alpha)/\alpha).
\end{align*}
%%%%%%%%%%%%%%%% NOT NEEDED FOR PAPER %%%%%%%%%%%%%%%%%%%%%%
\iffalse 
\vspace{1mm}
\item[(\ref{thm:aug-lag-mult-error}.4)] If $1 \leq \alpha \leq \beta \leq 3$ then
\begin{align*}
\alpha E_{m,0} (\bm{x}, \bm{\bar{\lambda}}_k + 2 q \bm{h} (\bm{x}), \bm{\mu}(\bm{x},\beta)/\beta) 
\leq \beta E_{m,0} (\bm{x}, \bm{\bar{\lambda}}_k + 2 q \bm{h} (\bm{x}), \bm{\mu}(\bm{x},\alpha)/\alpha).
\end{align*}
\fi
%%%%%%%%%%%%%%%%%%%%%%%%%%%%%%%%%%%%%%%%%%%%%%%%%%%%%%%
\end{enumerate}
\fi
\begin{align}
&-\nabla \mathcal{L}_{q_k} (\bm{x}, \bm{\bar{\lambda}}_k) (\bm{y} (\bm{x},\alpha) - \bm{x}) 
\leq -\nabla_x \mathcal{L}_{q_k} (\bm{x}, \bm{\bar{\lambda}}_k) (\bm{y} (\bm{x},\beta) - \bm{x}), \ \text{for} \ 0 < \alpha \leq \beta, \label{thm:aug-lag-mult-error.1} \\
&E_{m,0} (\bm{x}, \bm{\bar{\lambda}}_k + 2 q_k \bm{h} (\bm{x}), \bm{\mu}(\bm{x},\beta)/\beta) 
\leq E_{m,0} (\bm{x}, \bm{\bar{\lambda}}_k + 2 q_k \bm{h} (\bm{x}), \bm{\mu}(\bm{x},\alpha)/\alpha), \ \text{for} \ 0 < \alpha \leq \beta \leq 1. \label{thm:aug-lag-mult-error.2}
\end{align}
\end{lemma}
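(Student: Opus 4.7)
My plan is to reduce both claims to the optimality properties of the projection map $\bm{y}(\bm{x},\cdot)$ defined by problem (\ref{prob:pasa-error}). Throughout I abbreviate $\bm{d}_\alpha := \bm{y}(\bm{x},\alpha) - \bm{x}$ and $\bm{g}(\bm{x})^\intercal := \nabla_x \mathcal{L}_{q_k}(\bm{x}, \bm{\bar{\lambda}}_k)$.

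For (\ref{thm:aug-lag-mult-error.1}), I would exploit the optimality of $\bm{y}(\bm{x},\alpha)$ and $\bm{y}(\bm{x},\beta)$ as minimizers of (\ref{prob:pasa-error}) at their respective parameter values. Evaluating each parameter's objective at the other's minimizer (which is feasible for both) and cancelling the common constant $\|\bm{g}(\bm{x})\|^2$ yields
\begin{align*}
\|\bm{d}_\alpha\|^2 + 2\alpha\, \bm{g}(\bm{x})^\intercal \bm{d}_\alpha &\leq \|\bm{d}_\beta\|^2 + 2\alpha\, \bm{g}(\bm{x})^\intercal \bm{d}_\beta, \\
\|\bm{d}_\beta\|^2 + 2\beta\, \bm{g}(\bm{x})^\intercal \bm{d}_\beta &\leq \|\bm{d}_\alpha\|^2 + 2\beta\, \bm{g}(\bm{x})^\intercal \bm{d}_\alpha.
\end{align*}
Summing collapses the quadratic terms and leaves $2(\alpha - \beta)\, \bm{g}(\bm{x})^\intercal(\bm{d}_\alpha - \bm{d}_\beta) \leq 0$. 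For $\alpha \leq \beta$ this forces $\bm{g}(\bm{x})^\intercal \bm{d}_\alpha \geq \bm{g}(\bm{x})^\intercal \bm{d}_\beta$, which is exactly (\ref{thm:aug-lag-mult-error.1}).

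For (\ref{thm:aug-lag-mult-error.2}), my plan is to expose a hidden convexity structure via the value function
\[
p(\alpha) := \min_{\bm{y} \in \Omega} \left[ \tfrac{1}{2\alpha}\|\bm{y} - \bm{x}\|^2 + \bm{g}(\bm{x})^\intercal(\bm{y} - \bm{x}) \right] = \tfrac{1}{2\alpha}\|\bm{d}_\alpha\|^2 + \bm{g}(\bm{x})^\intercal \bm{d}_\alpha.
\]
The envelope theorem, applicable because the strictly convex minimizer $\bm{y}(\bm{x},\alpha)$ is unique and continuous in $\alpha$, gives $p'(\alpha) = -\|\bm{d}_\alpha\|^2/(2\alpha^2)$. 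The property asserting that $\|\bm{d}_\alpha\|/\alpha$ is non-increasing in $\alpha$ (property P4 of Proposition 2.1 in \cite{Hager2006}) then shows that $p'$ is non-decreasing, so $p$ is convex on $(0, \infty)$. A direct algebraic rearrangement of (\ref{eq:aug-lag-mod.4}) reveals the key identity
\[
E_{m,0}\bigl(\bm{x}, \bm{\bar{\lambda}}_k + 2q_k\bm{h}(\bm{x}), \bm{\mu}(\bm{x},\alpha)/\alpha\bigr) = -(2-\alpha)\, p'(\alpha) - p(\alpha),
\]
so (\ref{thm:aug-lag-mult-error.2}) reduces to showing that $H(\alpha) := (2-\alpha)\, p'(\alpha) + p(\alpha)$ is non-decreasing on $(0, 2]$, which in particular covers the required range $(0, 1]$. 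Applying the subgradient inequality $p(\alpha_2) - p(\alpha_1) \geq p'(\alpha_1)(\alpha_2 - \alpha_1)$ for the convex function $p$ to the difference $H(\alpha_2) - H(\alpha_1)$ collapses it to $(2 - \alpha_2)\bigl[p'(\alpha_2) - p'(\alpha_1)\bigr] \geq 0$ for $0 < \alpha_1 \leq \alpha_2 \leq 2$, using $2 - \alpha_2 \geq 0$ together with monotonicity of $p'$.

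The main obstacle is discovering the reformulation $E_{m,0} = -(2-\alpha)\, p'(\alpha) - p(\alpha)$; once the underlying Moreau envelope $p$ is identified, monotonicity of $p'$ (inherited from the standard projected-gradient property P4) yields the conclusion via the short convexity computation above. A minor technical point is justifying the envelope-theorem differentiation of $p$ globally in $\alpha$, which is routine since the strictly convex QP defining $\bm{y}(\bm{x},\alpha)$ has a unique minimizer that depends continuously on $\alpha$.
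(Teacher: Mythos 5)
Your proof is correct, and it departs from the paper's argument in both halves, so a comparison is worth recording. For (\ref{thm:aug-lag-mult-error.1}) the paper invokes property P1 of \cite{Hager2006} with $\bm{x} \gets \bm{x} - \alpha \bm{g}(\bm{x})$ and $\bm{y} \gets \bm{y}(\bm{x},\beta)$, applies Cauchy--Schwarz, repeats with $\alpha$ and $\beta$ swapped, and finishes with P4 ($\|\bm{d}^{\alpha}(\bm{x})\| \leq \|\bm{d}^{\beta}(\bm{x})\|$); your symmetric comparison of objective values at each other's minimizer, summed so the quadratic terms cancel, gives the same monotonicity of $\alpha \mapsto -\bm{g}(\bm{x})^{\intercal}\bm{d}^{\alpha}(\bm{x})$ with no norm estimates at all. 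For (\ref{thm:aug-lag-mult-error.2}) the paper chains its Cauchy--Schwarz bound with two applications of P5 ($\|\bm{d}^{\beta}(\bm{x})\|/\beta \leq \|\bm{d}^{\alpha}(\bm{x})\|/\alpha$) and the sign of $1-\beta$, whereas you identify the Moreau-envelope value function $p$ with $p'(\alpha) = -\|\bm{d}^{\alpha}(\bm{x})\|^2/(2\alpha^2)$, and your key identity does check out:
\begin{align*}
-(2-\alpha)\,p'(\alpha) - p(\alpha)
= \frac{(2-\alpha)-\alpha}{2\alpha^2}\,\|\bm{d}^{\alpha}(\bm{x})\|^2 - \bm{g}(\bm{x})^{\intercal}\bm{d}^{\alpha}(\bm{x})
= \left(\frac{1}{\alpha^2}-\frac{1}{\alpha}\right)\|\bm{d}^{\alpha}(\bm{x})\|^2 - \bm{g}(\bm{x})^{\intercal}\bm{d}^{\alpha}(\bm{x}),
\end{align*}
which is exactly the right-hand side of (\ref{eq:aug-lag-mod.4}); convexity of $p$ together with the gradient inequality then makes $H(\alpha)=(2-\alpha)p'(\alpha)+p(\alpha)$ non-decreasing on $(0,2]$. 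What your route buys is conceptual clarity and a strictly stronger conclusion: monotonicity of the multiplier error holds on $(0,2]$, not merely $(0,1]$, and this range is sharp --- for $n=1$, $\Omega=\{x \geq 0\}$, $\bm{x}=1$, $\bm{g}(\bm{x})=1$ one computes $E_{m,0}\bigl(\bm{x}, \bm{\bar{\lambda}}_k + 2q_k\bm{h}(\bm{x}), \bm{\mu}(\bm{x},\alpha)/\alpha\bigr) = 1 + (1-\alpha)/\alpha^2$ for $\alpha>1$, which decreases until $\alpha=2$ and increases thereafter. Two cosmetic repairs: the property you cite as P4 (that $\|\bm{d}^{\alpha}(\bm{x})\|/\alpha$ is non-increasing in $\alpha$) is P5 in the numbering this paper uses for Proposition 2.1 of \cite{Hager2006}, P4 being that $\|\bm{d}^{\alpha}(\bm{x})\|$ is non-decreasing; and the constant cancelled in your opening step is $\alpha^2\|\bm{g}(\bm{x})\|^2$ (resp.\ $\beta^2\|\bm{g}(\bm{x})\|^2$), constant in $\bm{y}$ for each fixed parameter, rather than $\|\bm{g}(\bm{x})\|^2$. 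The envelope-theorem step you flag is indeed routine (the inner problem is strongly convex in $\bm{y}$, so the minimizer is unique and continuous in $\alpha$, and Danskin applies), and the monotonicity of $p'$ you deduce from that projection property is precisely the convexity your final computation needs.
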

\begin{proof}
Suppose $0 < \alpha \leq \beta$. Let $\bm{d}^{\alpha} (\bm{x}) = \bm{y} (\bm{x}, \alpha) - \bm{x}$. By P1 in \cite{Hager2006} with $\bm{x} \gets \bm{x} - \alpha \bm{g} (\bm{x})$ and $\bm{y} \gets \bm{y} (\bm{x}, \beta)$, we have that
\begin{align}
0 
&\leq (\bm{y} (\bm{x}, \alpha) - (\bm{x} - \alpha \bm{g} (\bm{x})))^{\intercal} (\bm{y} (\bm{x}, \beta) - \bm{y} (\bm{x},\alpha)) \\
&= ( \bm{d}^{\alpha} (\bm{x}) + \alpha \bm{g} (\bm{x}))^{\intercal} (\bm{d}^{\beta} (\bm{x}) - \bm{d}^{\alpha} (\bm{x})) \\
&= \bm{d}^{\alpha} (\bm{x})^{\intercal} \bm{d}^{\beta} (\bm{x}) - \| \bm{d}^{\alpha} (\bm{x}) \|^2 + \alpha \bm{g} (\bm{x})^{\intercal} \bm{d}^{\beta} (\bm{x}) - \alpha \bm{g} (\bm{x})^{\intercal} \bm{d}^{\alpha} (\bm{x}).
\end{align}
Rearranging terms and dividing both sides by $\alpha$ yields
\begin{align}
- \bm{g} (\bm{x})^{\intercal} \bm{d}^{\beta} (\bm{x})
&\leq - \bm{g} (\bm{x})^{\intercal} \bm{d}^{\alpha} (\bm{x}) + \frac{1}{\alpha} \bm{d}^{\alpha} (\bm{x})^{\intercal} \bm{d}^{\beta} (\bm{x}) - \frac{1}{\alpha} \| \bm{d}^{\alpha} (\bm{x}) \|^2.
\end{align}
It now follows from Cauchy-Schwarz that
\begin{align}
- \bm{g} (\bm{x})^{\intercal} \bm{d}^{\beta} (\bm{x})
&\leq - \bm{g} (\bm{x})^{\intercal} \bm{d}^{\alpha} (\bm{x}) + \frac{1}{\alpha} \| \bm{d}^{\alpha} (\bm{x}) \| \| \bm{d}^{\beta} (\bm{x}) \| - \frac{1}{\alpha} \| \bm{d}^{\alpha} (\bm{x}) \|^2. \label{eq:aug-lag-mult-error.1}
\end{align}
By swapping $\alpha$ and $\beta$ in the construction of (\ref{eq:aug-lag-mult-error.1}), we obtain the inequality
\begin{align}
- \bm{g} (\bm{x})^{\intercal} \bm{d}^{\alpha} (\bm{x})
&\leq - \bm{g} (\bm{x})^{\intercal} \bm{d}^{\beta} (\bm{x}) + \frac{1}{\beta} \| \bm{d}^{\beta} (\bm{x}) \| \| \bm{d}^{\alpha} (\bm{x}) \| - \frac{1}{\beta} \| \bm{d}^{\beta} (\bm{x}) \|^2. \label{eq:aug-lag-mult-error.1.1}
\end{align}

As $\alpha \leq \beta$, by P4 in \cite{Hager2006} we have that $\| \bm{d}^{\alpha} (\bm{x}) \| \leq \| \bm{d}^{\beta} (\bm{x}) \|$. Substituting this into (\ref{eq:aug-lag-mult-error.1.1}) yields
\begin{align}
- \bm{g} (\bm{x})^{\intercal} \bm{d}^{\alpha} (\bm{x})
&\leq - \bm{g} (\bm{x})^{\intercal} \bm{d}^{\beta} (\bm{x}) + \frac{1}{\beta} \| \bm{d}^{\beta} (\bm{x}) \|^2 - \frac{1}{\beta} \| \bm{d}^{\beta} (\bm{x}) \|^2 
= - \bm{g} (\bm{x})^{\intercal} \bm{d}^{\beta} (\bm{x}). \label{eq:aug-lag-mult-error.2.1}
\end{align}
which is equivalent to (\ref{thm:aug-lag-mult-error.1}). 

%%%%%%%%%%%%%%%% NOT NEEDED FOR PAPER %%%%%%%%%%%%%%%%%%%%%%
\iffalse 
By P5 in \cite{Hager2006}, we have that $\| \bm{d}^{\beta} (\bm{x}) \| \leq \frac{\beta}{\alpha} \| \bm{d}^{\alpha} (\bm{x}) \|$ which combined with (\ref{eq:aug-lag-mult-error.1}) yields
\begin{align}
- \bm{g} (\bm{x})^{\intercal} \bm{d}^{\beta} (\bm{x})
&\leq - \bm{g} (\bm{x})^{\intercal} \bm{d}^{\alpha} (\bm{x}) + \frac{\beta}{\alpha^2} \| \bm{d}^{\alpha} (\bm{x}) \|^2 - \frac{1}{\alpha} \| \bm{d}^{\alpha} (\bm{x}) \|^2 \nonumber \\
&= - \bm{g} (\bm{x})^{\intercal} \bm{d}^{\alpha} (\bm{x}) + \left( \frac{\beta - \alpha}{\alpha} \right) \frac{\| \bm{d}^{\alpha} (\bm{x}) \|^2}{\alpha}. \label{eq:aug-lag-mult-error.2.2}
\end{align}
By P6 in \cite{Hager2006} and the fact that $\beta - \alpha \geq 0$, we have that
\begin{align}
\left( \frac{\beta - \alpha}{\alpha} \right) \frac{\| \bm{d}^{\alpha} (\bm{x}) \|^2}{\alpha}
\leq \left( \frac{\beta - \alpha}{\alpha} \right) \left( - \bm{g} (\bm{x})^{\intercal} \bm{d}^{\alpha} (\bm{x}) \right)
= \left( 1 - \frac{\beta}{\alpha} \right) \bm{g} (\bm{x})^{\intercal} \bm{d}^{\alpha} (\bm{x}). \label{eq:aug-lag-mult-error.2.3}
\end{align}
Now (\ref{eq:aug-lag-mult-error.2.2}) and (\ref{eq:aug-lag-mult-error.2.3}) yield
\begin{align}
- \bm{g} (\bm{x})^{\intercal} \bm{d}^{\beta} (\bm{x}) 
&\leq - \frac{\beta}{\alpha}  \bm{g} (\bm{x})^{\intercal} \bm{d}^{\alpha} (\bm{x}) \label{eq:aug-lag-mult-error.2.4}
\end{align}
which is equivalent to (\ref{thm:aug-lag-mult-error.2}old).
\fi
%%%%%%%%%%%%%%%%%%%%%%%%%%%%%%%%%%%%%%%%%%%%%%%%%%%%%%%%%%%%

Now suppose that $\alpha \leq \beta \leq 1$. By P5 in \cite{Hager2006}, we have that $\| \bm{d}^{\beta} (\bm{x}) \| \leq \frac{\beta}{\alpha} \| \bm{d}^{\alpha} (\bm{x}) \|$. Substituting this into (\ref{eq:aug-lag-mult-error.1}) yields
\begin{align}
- \bm{g} (\bm{x})^{\intercal} \bm{d}^{\beta} (\bm{x})
&\leq - \bm{g} (\bm{x})^{\intercal} \bm{d}^{\alpha} (\bm{x}) + \frac{\beta}{\alpha^2} \| \bm{d}^{\alpha} (\bm{x}) \|^2 - \frac{1}{\alpha} \| \bm{d}^{\alpha} (\bm{x}) \|^2 \\
&= - \bm{g} (\bm{x})^{\intercal} \bm{d}^{\alpha} (\bm{x}) + \left( \frac{1}{\alpha^2} - \frac{1}{\alpha} \right) \| \bm{d}^{\alpha} (\bm{x}) \|^2 - \frac{1 - \beta}{\alpha^2} \| \bm{d}^{\alpha} (\bm{x}) \|^2. \label{eq:aug-lag-mult-error.3}
\end{align}
Again, by P5 in \cite{Hager2006} we have that $-\frac{1}{\alpha^2} \| \bm{d}^{\alpha} (\bm{x}) \|^2 \leq - \frac{1}{\beta^2} \| \bm{d}^{\beta} (\bm{x}) \|$ and since $1 - \beta \geq 0$ it follows that 
\begin{align}
- \frac{1 - \beta}{\alpha^2} \| \bm{d}^{\alpha} (\bm{x}) \|^2 
\leq - \frac{1 - \beta}{\beta^2} \| \bm{d}^{\beta} (\bm{x}) \|^2
= - \left( \frac{1}{\beta^2} - \frac{1}{\beta} \right) \| \bm{d}^{\beta} (\bm{x}) \|^2. \label{eq:aug-lag-mult-error.3.1}
\end{align}
Combining (\ref{eq:aug-lag-mult-error.3}) and (\ref{eq:aug-lag-mult-error.3.1}) and rearranging terms yields
\begin{align}
- \bm{g} (\bm{x})^{\intercal} \bm{d}^{\beta} (\bm{x}) + \left( \frac{1}{\beta^2} - \frac{1}{\beta} \right) \| \bm{d}^{\beta} (\bm{x}) \|^2
&\leq - \bm{g} (\bm{x})^{\intercal} \bm{d}^{\alpha} (\bm{x}) + \left( \frac{1}{\alpha^2} - \frac{1}{\alpha} \right) \| \bm{d}^{\alpha} (\bm{x}) \|^2 \label{eq:aug-lag-mult-error.3.2}
\end{align}
which by (\ref{eq:aug-lag-mod.4}) is equivalent to (\ref{thm:aug-lag-mult-error.2}).
%%%%%%%%%%%%%%%% NOT NEEDED FOR PAPER %%%%%%%%%%%%%%%%%%%%%%
\iffalse 
Now suppose that $1 \leq \alpha \leq \beta \leq 3$. By P5 in \cite{Hager2006}, we have that $-\frac{1}{\alpha^2} \| \bm{d}^{\alpha} (\bm{x}) \|^2 \leq - \frac{1}{\beta^2} \| \bm{d}^{\beta} (\bm{x}) \|$. Combining this with $\frac{1 - \beta}{\beta^3} \leq \frac{1 - \alpha}{\alpha^3} \leq 0$ for $1 \leq \alpha \leq \beta \leq 3$ yields
\begin{align}
\alpha \left(\frac{1 - \beta}{\beta^2} \right) \| \bm{d}^{\beta} (\bm{x}) \|^2 
\leq \beta \left( \frac{1 - \alpha}{\alpha^2} \right) \| \bm{d}^{\alpha} (\bm{x}) \|^2.
\label{eq:aug-lag-mult-error.4}
\end{align}
As $\alpha \leq \beta$, combining (\ref{thm:aug-lag-mult-error.2}) with (\ref{eq:aug-lag-mult-error.4}) it follows that
\begin{align}
- \alpha \bm{g} (\bm{x})^{\intercal} \bm{d}^{\beta} (\bm{x}) + \alpha \left( \frac{1}{\beta^2} - \frac{1}{\beta} \right) \| \bm{d}^{\beta} (\bm{x}) \|^2
&\leq - \beta \bm{g} (\bm{x})^{\intercal} \bm{d}^{\alpha} (\bm{x}) + \beta \left( \frac{1}{\alpha^2} - \frac{1}{\alpha} \right) \| \bm{d}^{\alpha} (\bm{x}) \|^2 \label{eq:aug-lag-mult-error.4.1}
\end{align}
By the formula for $E_{m,0}$ in (\ref{eq:aug-lag-mod.4}), we have
\begin{align}
\alpha E_{m,0} (\bm{x}, \bm{\bar{\lambda}}_k + 2 q \bm{h} (\bm{x}), \bm{\mu}(\bm{x},\beta)/\beta) 
\leq \beta E_{m,0} (\bm{x}, \bm{\bar{\lambda}}_k + 2 q \bm{h} (\bm{x}), \bm{\mu}(\bm{x},\alpha)/\alpha) \label{eq:aug-lag-mult-error.4.2}
\end{align}
which shows that (\ref{thm:aug-lag-mult-error}.4) holds.
\fi
%%%%%%%%%%%%%%%%%%%%%%%%%%%%%%%%%%%%%%%%%%%%%%%%%%%%%%%%%%%%
\end{proof}

Before providing a convergence result for this approach to phase one of NPASA, we consider how convergence of Algorithm~\ref{alg:pasa-aug-lag} for the minimization problem in the GS algorithm relates to the multiplier error estimator of the original nonlinear program, (\ref{prob:main-nlp}). In particular, since $E_{m,1} (\bm{x}_k, \bm{\lambda}_k, \bm{\mu}_k) \leq E_{m,0} (\bm{x}_k, \bm{\lambda}_k, \bm{\mu}_k)$ and our branching criterion in phase one of NPASA is given by $E_{m,1} (\bm{x}_k, \bm{\lambda}_k, \bm{\mu}_k) \leq \theta E_c (\bm{x}_k)$, we would like some conditions under which Algorithm~\ref{alg:pasa-aug-lag} converges to a point $\bm{x}_k$ at which $E_{m,0} (\bm{x}_k, \bm{\lambda}_k, \bm{\mu}_k) \approx 0$. Before providing such a result,
%The following result is a natural first step as it is a global convergence result for the multiplier error based on the global convergence result for PASA in \cite{Hager2016} that only requires one assumption in addition to those already imposed in \cite{Hager2016} to establish global convergence of PASA. First, 
we need to set up the assumptions that are used in this result as they are based on details of PASA \cite{HagerActive}. 

Given $\bm{u} \in \mathbb{R}^n$, $\bm{\nu} \in \mathbb{R}^{\ell}$, and $q \in \mathbb{R}$ we define the level set
\begin{align}
\mathcal{S}(\bm{u}, \bm{\nu}, q) := \{ \bm{x} \in \Omega : \mathcal{L}_q (\bm{x}, \bm{\nu}) \leq \mathcal{L}_q (\bm{u}, \bm{\nu}) \}. \label{def:level-set-S}
\end{align}
Additionally, we let the set $\mathcal{D}(\bm{u}, \bm{\nu}, q)$ denote the search directions generated by phase one of the algorithm PASA \cite{HagerActive} when solving the problem 
\begin{align}
\begin{array}{cc}
\displaystyle \min_{\bm{x}} & \displaystyle \mathcal{L}_q (\bm{x}, \bm{\nu}) \\
\text{s.t.} & \bm{x} \in \Omega
\end{array} \label{prob:aug-lag-intro}
\end{align}
starting at the initial guess $\bm{x} = \bm{u}$. Note that the definition of the search directions generated in phase one of PASA can be found in Algorithm 1 in Section 2 of \cite{HagerActive}. We are now ready to state the assumptions:
\begin{enumerate}[leftmargin=2\parindent,align=left,labelwidth=\parindent,labelsep=7pt]
\item[(G1)] Given $\bm{u} \in \mathbb{R}^n$ and $\bm{\nu} \in \mathbb{R}^{\ell}$, $\mathcal{L}_q (\bm{x}, \bm{\nu})$ is bounded from below on the level set $\mathcal{S} (\bm{u}, \bm{\nu})$ and $d_{\max} = \sup_{\bm{d} \in \mathcal{D}(\bm{u}, \bm{\nu}, q)} \| \bm{d} \| < \infty$.
\item[(G2)] Given $\bm{u} \in \mathbb{R}^n$ and $\bm{\nu} \in \mathbb{R}^{\ell}$, if $\overline{\mathcal{S}}(\bm{u}, \bm{\nu})$ is the collection of $\bm{x} \in \Omega$ whose distance to $\mathcal{S}(\bm{u}, \bm{\nu})$ is at most $d_{\max}$, then $\nabla_x \mathcal{L}_q (\bm{x}, \bm{\nu})$ is Lipschitz continuous on $\overline{\mathcal{S}}(\bm{u}, \bm{\nu})$.
\end{enumerate}

With Lemma~\ref{thm:aug-lag-mult-error} established and (G1) and (G2) stated, we are now ready to provide a convergence result for the nonmonotone gradient projection algorithm (NGPA) in phase one of PASA. %While some aspects of the proof are similar to Theorem 2.2 in \cite{Hager2006}, we provide most of the details here for completion.
%This convergence theorem established for the nonmonotone gradient projection algorithm (NGPA) in \cite{Hager2006} 
that is necessary to establish convergence when using Algorithm~\ref{alg:pasa-aug-lag} to solve 
%the augmented Lagrangian 
problem (\ref{prob:aug-lag-mod}).

\begin{theorem} \label{thm:ngpa-conv}
Given $\bm{u}_0 \in \mathbb{R}^n$, $\bm{\bar{\lambda}}_k$, and $q_k$ suppose that (G1) and (G2) hold at $(\bm{u}_0, \bm{\bar{\lambda}}_k, q_k)$. If the nonmonotone gradient projection algorithm (NGPA) in \cite{Hager2006} with $\varepsilon = 0$ is used to solve problem (\ref{prob:aug-lag-mod}) then NGPA either terminates in a finite number of iterations at a stationary point, or generates a sequence of iterates $\{ \bm{u}_i \}_{i=0}^{\infty}$ satisfying
\begin{align}
\liminf_{i \to \infty} E_{m,0} (\bm{u}_i, \bm{\bar{\lambda}}_k + 2 q_k \bm{h} (\bm{u}_i), \bm{\mu}(\bm{u}_i,1)) = 0.
\end{align}
\end{theorem}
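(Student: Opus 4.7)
The plan is to combine the convergence guarantee for NGPA established in \cite{Hager2006} with the bound on $E_{m,0}$ in Lemma~\ref{lem:Em0-Epasa-comparison}. First, I would invoke the global convergence theorem for NGPA applied to the problem (\ref{prob:aug-lag-mod}). Under assumptions (G1) and (G2), the main result in \cite{Hager2006} guarantees that NGPA either terminates in a finite number of iterations at a stationary point of (\ref{prob:aug-lag-mod}), or else it generates an infinite sequence $\{\bm{u}_i\}_{i=0}^{\infty}$ satisfying $\liminf_{i \to \infty} \|\bm{d}^{\alpha}(\bm{u}_i)\| = 0$ for some fixed $\alpha > 0$, where $\bm{d}^{\alpha}(\bm{u}_i) := \bm{y}(\bm{u}_i,\alpha) - \bm{u}_i$ with $\bm{y}(\bm{u}_i,\alpha)$ as defined in problem (\ref{prob:pasa-error}) using the gradient $\bm{g}(\bm{u}_i) = \nabla_x \mathcal{L}_{q_k}(\bm{u}_i, \bm{\bar{\lambda}}_k)^{\intercal}$.

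Next, I would use property P5 in Proposition 2.1 of \cite{Hager2006}, which yields $\|\bm{d}^{1}(\bm{u}_i)\| \leq \max\{1,1/\alpha\}\|\bm{d}^{\alpha}(\bm{u}_i)\|$, to convert the NGPA convergence into a statement about $\bm{d}^{1}$, namely $\liminf_{i \to \infty} \|\bm{y}(\bm{u}_i,1) - \bm{u}_i\| = 0$. Hence, I can extract a subsequence (which I relabel) along which $\|\bm{y}(\bm{u}_i,1) - \bm{u}_i\| \to 0$.

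Next, I would argue that the sequence $\{\bm{u}_i\}$ lies in a bounded region on which $\nabla_x\mathcal{L}_{q_k}(\cdot, \bm{\bar{\lambda}}_k)$ is uniformly bounded. By (G1), the NGPA iterates remain in the level set $\mathcal{S}(\bm{u}_0, \bm{\bar{\lambda}}_k, q_k)$ up to a displacement bounded by $d_{\max}$, so they all lie in the enlarged set $\overline{\mathcal{S}}(\bm{u}_0, \bm{\bar{\lambda}}_k)$ appearing in (G2). On this enlarged set, $\nabla_x \mathcal{L}_{q_k}(\cdot, \bm{\bar{\lambda}}_k)$ is Lipschitz continuous, and since $\overline{\mathcal{S}}(\bm{u}_0, \bm{\bar{\lambda}}_k) \subseteq \Omega$ is bounded (a direct consequence of $d_{\max} < \infty$ combined with the fact that the sufficient decrease guarantees containment in a bounded sublevel set after absorbing the $d_{\max}$ offset), it follows that there exists a constant $C > 0$ with $\|\nabla_x \mathcal{L}_{q_k}(\bm{u}_i, \bm{\bar{\lambda}}_k)\| \leq C$ for all $i$.

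Finally, I would conclude by applying the upper bound (\ref{eq:aug-lag-mod.5}) of Lemma~\ref{lem:Em0-Epasa-comparison}:
\begin{align*}
E_{m,0}(\bm{u}_i, \bm{\bar{\lambda}}_k + 2q_k \bm{h}(\bm{u}_i), \bm{\mu}(\bm{u}_i,1))
\leq \|\nabla_x \mathcal{L}_{q_k}(\bm{u}_i, \bm{\bar{\lambda}}_k)\| \, \|\bm{y}(\bm{u}_i,1) - \bm{u}_i\|
\leq C\,\|\bm{y}(\bm{u}_i,1) - \bm{u}_i\|,
\end{align*}
which, combined with $\|\bm{y}(\bm{u}_i,1) - \bm{u}_i\| \to 0$ along the subsequence, yields the desired $\liminf_{i \to \infty} E_{m,0}(\bm{u}_i, \bm{\bar{\lambda}}_k + 2 q_k \bm{h}(\bm{u}_i), \bm{\mu}(\bm{u}_i,1)) = 0$. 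The main obstacle I anticipate is precisely linking the NGPA convergence statement from \cite{Hager2006} (which is usually phrased using the algorithm's own step-size parameter) to the fixed-step quantity $\bm{y}(\bm{u}_i,1) - \bm{u}_i$; this is where Lemma~\ref{thm:aug-lag-mult-error} and the properties P4--P6 of the projected-gradient map play a critical role, together with verifying the uniform bound on the gradient via (G1) and (G2).
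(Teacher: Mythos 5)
Your route is genuinely different from the paper's, and it is worth noting where the two diverge before pointing at the problem. You invoke the NGPA convergence theorem of \cite{Hager2006} as a black box to obtain $\liminf_{i\to\infty}\|\bm{y}(\bm{u}_i,1)-\bm{u}_i\|=0$ (the P4/P5 conversion between step sizes is fine), and then transfer this to $E_{m,0}$ through the Cauchy--Schwarz bound (\ref{eq:aug-lag-mod.5}), which requires a uniform bound $C$ on $\|\nabla_x\mathcal{L}_{q_k}(\bm{u}_i,\bm{\bar{\lambda}}_k)\|$. The paper instead replays the NGPA descent argument itself in the $E_{m,0}$ metric: arguing by contradiction, it assumes $E_{m,0}(\bm{u}_i,\bm{\bar{\lambda}}_k+2q_k\bm{h}(\bm{u}_i),\bm{\mu}(\bm{u}_i,1)) = -\bm{g}(\bm{u}_i)^{\intercal}(\bm{y}(\bm{u}_i,1)-\bm{u}_i)\geq\tau>0$, uses Lemma~\ref{thm:aug-lag-mult-error} (the monotonicity and comparison properties of the directional quantity $-\bm{g}^{\intercal}\bm{d}^{\alpha}$, not the norm properties) to carry this lower bound from step size $1$ to the line-search step $\bar{\alpha}_i\in[\alpha_{\min},\alpha_{\max}]$, and feeds it into the Armijo decrease inequality (2.16) of \cite{Hager2006}, with the step size lower bound supplied by the Lipschitz assumption (G2), to contradict the boundedness from below in (G1). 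The point of that detour is precisely to avoid any bound on the gradient norm.

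That is where your proof has a genuine gap. Your claim that $\overline{\mathcal{S}}(\bm{u}_0,\bm{\bar{\lambda}}_k)$ is bounded does not follow from (G1)--(G2): $d_{\max}<\infty$ bounds the norms of the search directions, not the diameter of the level set, and boundedness of $\mathcal{L}_{q_k}$ from below does not make its sublevel sets bounded. Theorem~\ref{thm:ngpa-conv} does not assume $\Omega$ compact (compactness appears only in the downstream theorems), so Lipschitz continuity of $\nabla_x\mathcal{L}_{q_k}$ on a possibly unbounded $\overline{\mathcal{S}}$ yields at most linear growth of the gradient, not a uniform bound. Without such a bound, $\|\bm{d}^1(\bm{u}_i)\|\to 0$ (where $\bm{d}^1(\bm{u}_i):=\bm{y}(\bm{u}_i,1)-\bm{u}_i$) does not force $E_{m,0}\to 0$: combining (\ref{eq:aug-lag-mod.4}) with the complementary slackness condition (\ref{prob:pasa-error.4}) gives $E_{m,0} = \|\bm{d}^1(\bm{u}_i)\|^2 + \bm{\mu}(\bm{u}_i,1)^{\intercal}(-\bm{r}(\bm{u}_i))$, and the multiplier term, which by (\ref{prob:pasa-error.1}) is controlled only by $(\|\bm{g}(\bm{u}_i)\|+\|\bm{d}^1(\bm{u}_i)\|)\,\|\bm{d}^1(\bm{u}_i)\|$, can fail to vanish if $\|\bm{g}(\bm{u}_i)\|$ grows like $\|\bm{d}^1(\bm{u}_i)\|^{-1}$ along iterates escaping to infinity. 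Your argument becomes correct if you add a hypothesis guaranteeing a uniform gradient bound along the iterates (for instance compactness of $\Omega$, which holds where the paper applies this theorem), but as a proof of the theorem as stated it is incomplete, and the paper's contradiction argument is designed to close exactly this hole.
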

\begin{proof}
Let $F(\bm{x}) = \mathcal{L}_{q_k} (\bm{x}, \bm{\bar{\lambda}}_k)$. First, note that from (3.6) in \cite{Hager2016}, the stepsize $s_k$ in the Armijo line search is bounded below by
%\begin{align}
$s_k \geq \min \left\{ 1, \frac{2 \eta (1 - \delta)}{\kappa \alpha_{\max}} \right\} =: c$,
%\end{align}
where $\kappa$ is the Lipschitz constant for $\nabla_x \mathcal{L}_{q_k} (\bm{x}, \bm{\bar{\lambda}}_k)$. From equation (2.16) in \cite{Hager2006} we have that
\begin{align}
- \delta c \bm{g} (\bm{u}_i)^{\intercal} (\bm{y} (\bm{u}_i, \bar{\alpha}_i) - \bm{u}_i) \leq F_i^r - F (\bm{u}_{i+1}), \label{eq:gpa-conv.1}
\end{align}
for some $\bar{\alpha}_i \in [\alpha_{\min}, \alpha_{\max}]$. We now proceed by way of contradiction. Accordingly, suppose that 
\begin{align}
\liminf_{i \to \infty} E_{m,0} (\bm{u}_i, \bm{\bar{\lambda}}_k + 2 q_k \bm{h} (\bm{u}_i), \bm{\mu}(\bm{u}_i,1)) > 0.
\end{align}
Then there exists a constant $\tau > 0$ such that 
\begin{align}
\tau \leq E_{m,0} (\bm{u}_i, \bm{\bar{\lambda}}_k + 2 q_k \bm{h} (\bm{u}_i), \bm{\mu}(\bm{u}_i,1)) 
= - \bm{g} (\bm{u}_i)^{\intercal} (\bm{y} (\bm{u}_i, 1) - \bm{u}_i), \label{eq:gpa-conv.2}
\end{align}
for all $i \geq 0$. Suppose $\bar{\alpha}_i \geq 1$. By (\ref{thm:aug-lag-mult-error.1}) in Theorem~\ref{thm:aug-lag-mult-error} with $\alpha = 1$ and $\beta = \bar{\alpha}_i$, we have
\begin{align}
\tau \leq - \bm{g} (\bm{u}_i)^{\intercal} (\bm{y} (\bm{u}_i, 1) - \bm{u}_i) 
\leq - \bm{g} (\bm{u}_i)^{\intercal} (\bm{y} (\bm{u}_i, \bar{\alpha}_i) - \bm{u}_i). \label{eq:gpa-conv.3}
\end{align}
On the other hand, if $\bar{\alpha}_i \in (0,1)$ then by (\ref{thm:aug-lag-mult-error.2}) in Theorem~\ref{thm:aug-lag-mult-error} with $\alpha = \bar{\alpha}_i$ and $\beta = 1$ we have
\begin{align}
- \bm{g} (\bm{u}_i)^{\intercal} (\bm{y} (\bm{u}_i, 1) - \bm{u}_i) 
\leq - \bm{g} (\bm{u}_i)^{\intercal} (\bm{y} (\bm{u}_i, \bar{\alpha}_i) - \bm{u}_i) + \left( \frac{1}{\bar{\alpha}_i} - 1 \right) \frac{\| \bm{y} (\bm{u}_i, \bar{\alpha}_i) - \bm{u}_i \|^2}{\bar{\alpha}_i}. \label{eq:gpa-conv.4}
\end{align}
From P6 in \cite{Hager2006} we have that 
\begin{align}
\frac{\| \bm{y} (\bm{u}_i, \bar{\alpha}_i) - \bm{u}_i \|^2}{\bar{\alpha}_i} \leq - \bm{g} (\bm{u}_i)^{\intercal} (\bm{y} (\bm{u}_i, \bar{\alpha}_i). \label{eq:gpa-conv.5}
\end{align}
As $\bar{\alpha}_i \in (0,1)$, we have that $\left( \frac{1}{\bar{\alpha}_i} - 1 \right) > 0$ which combined with (\ref{eq:gpa-conv.5}) yields
\begin{align}
\left( \frac{1}{\bar{\alpha}_i} - 1 \right) \frac{\| \bm{y} (\bm{u}_i, \bar{\alpha}_i) - \bm{u}_i \|^2}{\bar{\alpha}_i} 
&\leq \left( \frac{1}{\bar{\alpha}_i} - 1 \right) \left( - \bm{g} (\bm{u}_i)^{\intercal} (\bm{y} (\bm{u}_i, \bar{\alpha}_i) \right) %\nonumber \\
= \left( 1 - \frac{1}{\bar{\alpha}_i} \right) \bm{g} (\bm{u}_i)^{\intercal} (\bm{y} (\bm{u}_i, \bar{\alpha}_i). \label{eq:gpa-conv.6}
\end{align}
From (\ref{eq:gpa-conv.4}) and (\ref{eq:gpa-conv.6}) it now follows that
\begin{align}
- \bm{g} (\bm{u}_i)^{\intercal} (\bm{y} (\bm{u}_i, 1) - \bm{u}_i) \leq - \frac{1}{\bar{\alpha}_i} \bm{g} (\bm{u}_i)^{\intercal} (\bm{y} (\bm{u}_i, \bar{\alpha}_i) - \bm{u}_i). \label{eq:gpa-conv.7}
\end{align}
As $\bar{\alpha}_i \geq \alpha_{\min}$, (\ref{eq:gpa-conv.2}) and (\ref{eq:gpa-conv.7}) now yield
\begin{align}
\alpha_{\min} \tau \leq - \bm{g} (\bm{u}_i)^{\intercal} (\bm{y} (\bm{u}_i, \bar{\alpha}_i) - \bm{u}_i). \label{eq:gpa-conv.8}
\end{align}
Hence, by observing (\ref{eq:gpa-conv.3}) and (\ref{eq:gpa-conv.8}) we have established that for any choice of $\bar{\alpha}_i \in [\alpha_{\min}, \alpha_{\max}]$ there exists a constant $\nu$ such that
\begin{align}
\nu \leq - \bm{g} (\bm{u}_i)^{\intercal} (\bm{y} (\bm{u}_i, \bar{\alpha}_i) - \bm{u}_i). \label{eq:gpa-conv.9}
\end{align}
Combining (\ref{eq:gpa-conv.1}) and (\ref{eq:gpa-conv.9}) yields
%\begin{align}
$F (\bm{u}_{i+1}) \leq F_i^r - \delta c \nu$ 
%\label{eq:gpa-conv.10}
%\end{align}
which, by the same argument in the proof of Theorem 2.2 \cite{Hager2006}, results in a contradiction. Thus, 
%we have
%\begin{align}
$\liminf_{i \to \infty} E_{m,0} (\bm{u}_i, \bm{\bar{\lambda}}_k + 2 q_k \bm{h} (\bm{u}_i), \bm{\mu}(\bm{u}_i,1)) = 0$.
%\end{align}
%which concludes the proof.
\end{proof}

We now establish a global convergence result for Algorithm~\ref{alg:pasa-aug-lag} when solving this problem. We note that this is similar to Theorem 3.2 in \cite{Hager2016}. 

\iffalse %%%%%%%%%%%%%%%%%%%%%%%%%%
As some commentary for justifying this modification, in (\ref{eq:aug-lag-mod.5}) we establish that $E_{m,0} (\bm{x}, \bm{\bar{\lambda}}_k + 2 q_k \bm{h} (\bm{x}), \bm{\mu} (\bm{x},1)) \leq \| \nabla \mathcal{L} (\bm{x}, \bm{\bar{\lambda}}_k) \| E_{\scaleto{PASA}{4pt}} (\bm{x})$. So if PASA terminates using its regular stopping criterion at a point $\bm{x}$ where $\| \nabla \mathcal{L} (\bm{x}, \bm{\bar{\lambda}}_k) \| > 1$ then it is possible that $E_{m,0} (\bm{x}, \bm{\bar{\lambda}}_k + 2 q \bm{h} (\bm{x}), \bm{\mu} (\bm{x},1)) > \varepsilon$. %Note that both of the vectors required for the dot product are already available at each iteration of PASA due to the use of the gradient of the cost function and the need to check the regular stopping criterion for PASA. 
Alternatively, to ensure that the branching criterion between for phase one to phase two NPASA is satisfied, we could instead terminate when 
\begin{align}
E_{m,0} (\bm{x}, \bm{\bar{\lambda}}_k + 2 q_k \bm{h} (\bm{x}), \bm{\mu} (\bm{x},1)) 
= - \nabla_x \mathcal{L}_{q_k} (\bm{x}, \bm{\bar{\lambda}}_k) (\bm{y}(\bm{x},1) - \bm{x}) 
\leq \min \left\{ \varepsilon, \theta E_c (\bm{x}) \right\}. \label{eq:pasa-aug-lag-stop-crit2}
\end{align}
%In addition to an evaluation of the dot product for the left hand side of the inequality, this would require an evaluation of the equality constraints for the right hand side. However, as updating the multipliers $\bm{\bar{\lambda}}_k$ requires evaluating the equality constraints then we can compute the norm after evaluating these constraints to check the branching criterion. Note that if the hypotheses of Theorem~\ref{thm:pasa-aug-lag-conv} hold then the new stopping criterion in (\ref{eq:pasa-aug-lag-stop-crit1}) will be satisfied for any augmented Lagrangian problem in which the original stopping criterion, $E_{\scaleto{PASA}{4pt}}(\bm{x}) \leq \varepsilon$, will be satisfied.

%The benefit to using Algorithm~\ref{alg:pasa-aug-lag} to solve problem (\ref{prob:aug-lag-mod}) instead of the original version of PASA in \cite{Hager2016} is that when using Algorithm~\ref{alg:pasa-aug-lag} we no longer require the additional assumption that the sequence $\{ \bm{u}_i \}_{i=1}^{\infty}$ generated by PASA satisfies (\ref{result:pasa-aug-lag-conv.1}) that was required in Theorem~\ref{thm:pasa-aug-lag} to establish global convergence for the multiplier error when using PASA.
\fi %%%%%%%%%%%%%%%%%%%%%%%%%%%%%%%

\begin{theorem} \label{thm:pasa-aug-lag-mod-global-conv}
Given $\bm{u}_0 \in \mathbb{R}^n$, $\bm{\bar{\lambda}}_k$, and $q_k$ suppose that (G1) and (G2) hold at $(\bm{u}_0, \bm{\bar{\lambda}}_k, q_k)$. If Algorithm~\ref{alg:pasa-aug-lag} with $\varepsilon = 0$ is used to solve problem (\ref{prob:aug-lag-mod}) then Algorithm~\ref{alg:pasa-aug-lag} either terminates in a finite number of iterations at a stationary point, or generates a sequence of iterates $\{ \bm{u}_i \}_{i=0}^{\infty}$ satisfying
\begin{align}
\liminf_{i \to \infty} E_{m,0} (\bm{u}_i, \bm{\bar{\lambda}}_k + 2 q_k \bm{h} (\bm{u}_i), \bm{\mu}(\bm{u}_i,1)) = 0. \label{result:pasa-aug-lag-mod-global-conv}
\end{align}
\end{theorem}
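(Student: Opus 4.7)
My plan is to mirror the proof of Theorem 3.2 in \cite{Hager2016}, organizing the argument around the long-run behavior of Algorithm~\ref{alg:pasa-aug-lag}. If the algorithm terminates in finitely many iterations then, with $\varepsilon = 0$, the stopping test forces $E_{m,0}(\bm{u}_i, \bm{\bar{\lambda}}_k + 2 q_k \bm{h}(\bm{u}_i), \bm{\mu}(\bm{u}_i,1)) = 0$ at the terminal iterate and (\ref{result:pasa-aug-lag-mod-global-conv}) follows trivially. Otherwise, I would split the analysis into two cases based on how many phase-one (NGPA) steps occur along the infinite sequence $\{\bm{u}_i\}$.

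In the first case, where infinitely many phase-one steps occur, I would let $\{\bm{u}_{i_j}\}$ denote the corresponding subsequence and recycle the contradiction argument of Theorem~\ref{thm:ngpa-conv}. Supposing $E_{m,0}$ stayed uniformly bounded below by some $\tau > 0$ along $\{\bm{u}_{i_j}\}$, the Armijo lower bound (\ref{eq:gpa-conv.1}) combined with the $\bar{\alpha}_i$ dichotomy already established in that proof (using Lemma~\ref{thm:aug-lag-mult-error}) produces a uniform per-iteration descent of the form $F(\bm{u}_{i_j+1}) \leq F_{i_j}^r - \delta c \nu$ at each phase-one step, where $F = \mathcal{L}_{q_k}(\cdot,\bm{\bar{\lambda}}_k)$. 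Since the LCO steps executed in phase two are required, by the PASA framework of \cite{HagerActive}, to respect the same nonmonotone reference, $\{F_i^r\}$ is non-increasing across the entire iteration history, and telescoping the phase-one decreases would contradict the lower boundedness of $F$ on the level set guaranteed by (G1). This gives (\ref{result:pasa-aug-lag-mod-global-conv}) along the phase-one subsequence and hence over the full index set.

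In the second case, only finitely many NGPA steps occur, so Algorithm~\ref{alg:pasa-aug-lag} eventually remains in phase two. The parameter $\theta$ is only ever updated inside phase one (by the rule $\theta \gets \gamma \theta$), so it stabilizes at some $\theta_\infty > 0$ after the last visit to phase one. Since no further branch back to phase one is triggered, the failure of the branching condition forces $e_{\scaleto{PASA}{4pt}}(\bm{u}_i) \geq \theta_\infty \, E_{m,0}(\bm{u}_i, \bm{\bar{\lambda}}_k + 2 q_k \bm{h}(\bm{u}_i), \bm{\mu}(\bm{u}_i,1))$ for all sufficiently large $i$. The LCO convergence properties from \cite{HagerActive} then give $\liminf_i e_{\scaleto{PASA}{4pt}}(\bm{u}_i) = 0$, and dividing through by $\theta_\infty$ yields (\ref{result:pasa-aug-lag-mod-global-conv}).

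The hard part will be the bookkeeping in the first case: verifying that the contradiction argument of Theorem~\ref{thm:ngpa-conv}, written there for a pure NGPA run, remains valid when LCO steps are interleaved with GPA steps. The critical observation is that GPA and LCO share the same objective $\mathcal{L}_{q_k}(\cdot,\bm{\bar{\lambda}}_k)$ and both respect the nonmonotone reference, so every iterate stays in the level set $\mathcal{S}(\bm{u}_0,\bm{\bar{\lambda}}_k,q_k)$ and (G1)--(G2) remain in force throughout; a secondary subtlety, already isolated above, is that $\theta$ stabilizes the moment phase one is abandoned, which is what lets us treat it as a fixed positive constant in the second case.
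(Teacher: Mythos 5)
Your proposal is correct and follows essentially the same route as the paper's proof: a case split on which phase of Algorithm~\ref{alg:pasa-aug-lag} executes infinitely often, the NGPA descent contradiction of Theorem~\ref{thm:ngpa-conv} (via Lemma~\ref{thm:aug-lag-mult-error}) against the lower bound in (G1), and the failed branching criterion $e_{\scaleto{PASA}{4pt}} (\bm{u}_i) \geq \theta E_{m,0}$ combined with the argument of Theorem 3.2 in \cite{Hager2016} when the algorithm stays in phase two. The only difference is cosmetic: the paper treats three cases (phase two finite, phase one finite, both infinite) whereas you merge the first and third into a single ``infinitely many phase-one steps'' case, which is a valid consolidation since the same telescoping contradiction handles both.
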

\begin{proof}
The proof is split into three cases.\\
{\bfseries Case 1: Phase two of Algorithm~\ref{alg:pasa-aug-lag} is executed a finite number of times.} In this case, only phase one of Algorithm~\ref{alg:pasa-aug-lag} is executed for $i$ sufficiently large. As such, from Theorem~\ref{thm:ngpa-conv} it follows that (\ref{result:pasa-aug-lag-mod-global-conv}) holds. \\
{\bfseries Case 2: Phase one of Algorithm~\ref{alg:pasa-aug-lag} is executed a finite number of times.} Here, only phase two is executed for $i$ sufficiently large. Since phase two eventually does not branch back to phase one, by the branching criterion in phase two we have that
\begin{align}
\theta E_{m,0} (\bm{u}_i, \bm{\bar{\lambda}}_k + 2 q_k \bm{h} (\bm{u}_i), \bm{\mu}(\bm{u}_i,1)) \leq e_{\scaleto{PASA}{4pt}} (\bm{u}_i)
\end{align}
for $i$ sufficiently large. Using an argument found in the proof of Theorem 3.2 from \cite{Hager2016} we have that (\ref{result:pasa-aug-lag-mod-global-conv}) holds. \\
{\bfseries Case 3: Phases one and two of Algorithm~\ref{alg:pasa-aug-lag} are executed an infinite number of times.} By way of contradiction, assume that (\ref{result:pasa-aug-lag-mod-global-conv}) does not hold. Then there exists a constant $\tau > 0$ such that 
\begin{align}
\tau \leq E_{m,0} (\bm{u}_i, \bm{\bar{\lambda}}_k + 2 q_k \bm{h} (\bm{u}_i), \bm{\mu}(\bm{u}_i,1)) = - \bm{g} (\bm{u}_i)^{\intercal} (\bm{y} (\bm{u}_i, 1) - \bm{u}_i).
\end{align}
Letting $F (\bm{u}_{i+1}) = \mathcal{L}_{q_k} (\bm{u}_{i+1}, \bm{\bar{\lambda}}_k)$ and applying the same argument as in the proof of Theorem~\ref{thm:ngpa-conv}, we have that there exists a constant $\nu$ such that
\begin{align}
F (\bm{u}_{i+1}) \leq F (\bm{u}_i) - \delta c \nu \label{eq:pasa-aug-lag-mod-global-conv.10}
\end{align}
for each iteration of GPA when $i$ is sufficiently large. Additionally, by property F1 of the LCO in \cite{Hager2016}, we have that $F (\bm{u}_{i+1}) \leq F (\bm{u}_i)$. Since there are an infinite number of iterations in phase one, it follows that the cost function is unbounded from below which contradictions assumption (G1). Thus, (\ref{result:pasa-aug-lag-mod-global-conv}) holds.
\end{proof}

Based on the analysis in this section, Algorithm~\ref{alg:pasa-aug-lag} should be used to solve the minimization problem in Algorithm~\ref{alg:gs}, the GS algorithm. 
%This follows from comparing the assumptions required for convergence of the multiplier error estimator for the original nonlinear program when using PASA or Algorithm~\ref{alg:pasa-aug-lag} to solve problem (\ref{prob:aug-lag-mod}) in Theorem~\ref{thm:pasa-aug-lag} and \ref{thm:pasa-aug-lag-mod-global-conv}. While these Theorems yield the same result, more assumptions are used to achieve this result when solving problem (\ref{prob:aug-lag-mod}) with PASA.
To conclude this section, we note that the inequality multiplier $\bm{\mu} (\bm{x}_k, 1)$ required to check the convergence criterion in phase one of NPASA can be explicitly constructed using values computed during the solution of the GS algorithm. Details of this construction are provided in the companion paper \cite{diffenderfer2020local}.

\section{Convergence Analysis for Phase Two of NPASA} \label{section:npasa-phase-two}
In this section, we provide conditions under which the constraint and multiplier steps in the LS algorithm satisfy desirable convergence properties. We first provide a result on the perturbed Newton step scheme used in the constraint step in Section~\ref{subsec:infeas}. Then we establish a convergence result for the constraint step in Section~\ref{subsec:constraint-step-analysis} followed by a result for the multiplier step of NPASA in Section~\ref{subsec:multiplier-step-analysis}. 
\subsection{Algorithm with Infeasibility Detection for Solving Constraint Step} \label{subsec:infeas}
One method for minimizing the violation of the equality constraints in problem (\ref{prob:main-nlp}) is to generate a sequence of iterates $\{ \bm{w}_i \}_{i=1}^{\infty}$ defined by $\bm{w}_{i+1} = \bm{w}_i + \bm{d}_i$, where
%\begin{align}
$\bm{d}_i$ 
is the solutioin to the minimization problem given by  $\min \left\{ \| \bm{w} - \bm{d}_{i-1} \|^2 : \nabla \bm{h} (\bm{d}_{i-1}) (\bm{w} - \bm{d}_{i-1}) = - \bm{h} (\bm{d}_{i-1}), \bm{w} \in \Omega \right\}$.
%\end{align}
% Which is applying Newton's method to the equality constraints with a unit stepsize
Given $\bar{\bm{x}}$, PPROJ \cite{Hager2016} %, developed by Hager and Zhang, 
uses a dual method for solving the projection problem
\begin{align}
\begin{array}{cc}
\displaystyle \min_{\bm{x}} & \frac{1}{2} \| \bar{\bm{x}} - \bm{x} \|^2 \\
\text{s.t.} & \bm{l} \leq \bm{A} \bm{x} \leq \bm{u}, \ \ \bm{x} \geq \bm{0}.
\end{array}
\end{align}
then reconstructs the primal solution. We would like to use this algorithm to solve for the Newton direction, $\bm{d}_i$. However, since $\{ \bm{w} \in \mathbb{R}^n : \bm{h} (\bm{w}) = \bm{0}, \bm{w} \in \Omega \} \neq \emptyset$ does not guarantee
\begin{align}
\left\{ \bm{w} \in \mathbb{R}^n : \nabla \bm{h} (\bm{d}_i) (\bm{w} - \bm{d}_i) = - \bm{h} (\bm{d}_i), \bm{w} \in \Omega \right\} \neq \emptyset,
\end{align} 
it may be necessary to perturb the constraint $\nabla \bm{h} (\bm{d}_i) (\bm{w} - \bm{d}_i) = - \bm{h} (\bm{d}_i)$ in order to solve for $\bm{d}_{i+1}$.

In order to simplify the notation in our discussion of this problem, we express the linearized constraint set as
%\begin{align}
$\left\{ \bm{w} \in \mathbb{R}^n : \bm{M} (\bm{w} - \bar{\bm{w}}) = \bm{c}, \bm{w} \geq \bm{0} \right\}$
%\end{align}
supposing that we are given $\bm{M} \in \mathbb{R}^{m \times n}$, $\bm{c} \in \mathbb{R}^m$, and $\bar{\bm{w}} \in \mathbb{R}^n$. Ideally, we would like to determine the solution to the problem 
\begin{align}
&\min_{\bm{w} \in \mathcal{S}} \ \frac{1}{2} \| \bm{w} - \bar{\bm{w}} \|^2, \label{Sprob}
\end{align}
%\begin{align}
%&\min_{x \geq 0} \ \frac{1}{2} \| b - A(x - \bar{x}) \|^2, \label{FeasibilityProb}
%\end{align}
%%%given $A$, $b$, and $\bar{x}$. 
%Ideally, if we knew a solution to problem (\ref{FeasibilityProb}) then we would know whether or not $\mathcal{F}_0 = \emptyset$ since $\mathcal{F}_0 = \emptyset$ if and only if $\| b - A(x - \bar{x}) \| > 0$ for all $x \geq 0$. 
where $\mathcal{S} = \arg\min_{\bm{w} \geq \bm{0}} \left\{ \| \bm{c} - \bm{M} (\bm{w} - \bar{\bm{w}}) \|^2 \right\}$. Since problem (\ref{Sprob}) involves minimizing a strongly convex objective function over a convex set, there exists a unique solution which we will denote by $\bm{w}_{\infty}$. As such, every search direction from $\bm{w}_{\infty}$ leading to a point in $\mathcal{S}$ must be an ascent direction. Hence, the first order optimality conditions for $\bm{w}_{\infty}$ can be expressed as 
\begin{align}
\langle \bm{w}_{\infty} - \bar{\bm{w}}, \bm{w} - \bm{w}_{\infty} \rangle \geq 0, \ \text{for all} \ \bm{w} \in \mathcal{S}. 
\label{problemS}
\end{align}
Since determining elements of the constraint set for problem (\ref{Sprob}) requires knowing all minimizers of $\| \bm{c} - \bm{M}(\bm{w} - \bar{\bm{w}}) \|$ we consider an alternative problem. 

To simplify this problem, we define the sets 
%\begin{align}
$\mathcal{F} = \left\{ 
[\bm{w}, \bm{y}]^{\intercal} \in \mathbb{R}^{n+m} : \bm{M}(\bm{w} - \bar{\bm{w}}) + \bm{y} = \bm{c}, \bm{w} \geq \bm{0} \right\}$.
%\end{align}
Now, given a penalty parameter $p > 0$, the problem is defined by
\begin{align}
\min_{[\bm{w}, \bm{y}]^{\intercal} \in \mathcal{F}} \ \ f_p(\bm{w},\bm{y}) := \frac{1}{2} \| \bm{w} - \bar{\bm{w}} \|^2 + \frac{p}{2} \| \bm{y} \|^2. \label{prob:Pp}
\end{align}
Since problem (\ref{prob:Pp}) involves minimizing a strongly convex objective function over a convex set, it has a unique solution which we will denote by $[ \bm{w}_p, \bm{y}_p]^{\intercal}$. Note that the first order optimality conditions for $[ \bm{w}_p, \bm{y}_p ]^{\intercal}$ can be expressed as
\begin{align}
\langle \bm{w}_p - \bar{\bm{w}}, \bm{w} - \bm{w}_p \rangle + p \langle \bm{y}_p, \bm{y} - \bm{y}_p \rangle \geq 0, \ \text{for all} \ [\bm{w}, \bm{y}]^{\intercal} \in \mathcal{F}. \label{POptCond}
\end{align}
While the number of primal variables in (\ref{prob:Pp}) is greater than that of problem (\ref{Sprob}), the dimension of the dual variables is the same for problems (\ref{Sprob}) and (\ref{prob:Pp}). Hence, by using the dual method PPROJ \cite{Hager2016} we can solve problem (\ref{prob:Pp}) without increasing the dimension of the optimization problem.
%%%That is, the complexity is the same for solving (P) and (\ref{prob:Pp}) using PPROJ \cite{Hager2016}. 
The following result provides valuable insight on the relationship between $\bm{w}_p$ and $\bm{w}_{\infty}$. Note that we make use of the set
%\begin{align}
$\mathcal{F}_{\delta} = \left\{ [\bm{w}, \bm{y}]^{\intercal} \in \mathbb{R}^{n+m} : \bm{M}(\bm{w} - \bar{\bm{w}}) + \bm{y} = \bm{c}, \bm{w} \geq \bm{0}, \| \bm{y} \|^2 \leq \delta \right\}$.
%\end{align}
in the proof of the following result.

\begin{theorem} \label{thm:feasibility-detection}
Let $[\bm{w}_p, \bm{y}_p]^{\intercal}$ be the unique solution to problem \emph{(\ref{prob:Pp})}. Then 
%\begin{align}
$\| \bm{w}_p - \bm{x}_{\infty} \| 
= O \left( \frac{1}{p} \right)$,
%\end{align}
for all $p > 0$ sufficiently large.
%%%where $x_{\infty} \in \mathcal{S}$ is the closest solution to $\bar{x}$.	
\end{theorem}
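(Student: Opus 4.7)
The plan is to start from the first-order optimality condition (\ref{POptCond}) for problem (\ref{prob:Pp}) and test it against the candidate point $[\bm{w}_{\infty}, \bm{y}_{\infty}]^{\intercal}$, which lies in $\mathcal{F}$ because $\bm{w}_{\infty} \in \mathcal{S}$. Substituting gives
\begin{align*}
\langle \bm{w}_p - \bar{\bm{w}}, \bm{w}_{\infty} - \bm{w}_p \rangle + p \langle \bm{y}_p, \bm{y}_{\infty} - \bm{y}_p \rangle \geq 0,
\end{align*}
which after writing $\bm{w}_p - \bar{\bm{w}} = (\bm{w}_p - \bm{w}_{\infty}) + (\bm{w}_{\infty} - \bar{\bm{w}})$ rearranges to the key inequality
\begin{align*}
\|\bm{w}_p - \bm{w}_{\infty}\|^2 + p \|\bm{y}_p - \bm{y}_{\infty}\|^2 \leq \langle \bm{w}_{\infty} - \bar{\bm{w}}, \bm{w}_{\infty} - \bm{w}_p \rangle + p\langle \bm{y}_{\infty}, \bm{y}_p - \bm{y}_{\infty} \rangle.
\end{align*}
Here I would use two geometric facts. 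First, $\bm{y}_{\infty}$ is the min-norm element of the affine-image set $\{\bm{c} - \bm{M}(\bm{w}-\bar{\bm{w}}) : \bm{w} \geq \bm{0}\}$, so by the projection characterization the $p\langle \bm{y}_{\infty}, \bm{y}_p - \bm{y}_{\infty}\rangle$ term has sign $\leq 0$ or drops out after I split $\|\bm{y}_p\|^2 - \|\bm{y}_{\infty}\|^2 \geq \|\bm{y}_p - \bm{y}_{\infty}\|^2$.

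Next I would introduce the nearest point $\hat{\bm{w}}_p \in \mathcal{S}$ to $\bm{w}_p$. Because $\mathcal{S}$ is the intersection of an affine subspace with the orthant, Hoffman's lemma applied to the residual $\bm{M}(\bm{w}_p - \bar{\bm{w}}) - (\bm{c} - \bm{y}_{\infty}) = \bm{y}_{\infty} - \bm{y}_p$ yields a constant $C$ (depending only on $\bm{M}$ and $\mathcal{S}$) with $\|\hat{\bm{w}}_p - \bm{w}_p\| \leq C\|\bm{y}_p - \bm{y}_{\infty}\|$. Since $\hat{\bm{w}}_p \in \mathcal{S}$, the optimality condition (\ref{problemS}) for $\bm{w}_{\infty}$ gives $\langle \bm{w}_{\infty} - \bar{\bm{w}}, \bm{w}_{\infty} - \hat{\bm{w}}_p\rangle \leq 0$. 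Splitting $\bm{w}_{\infty} - \bm{w}_p = (\bm{w}_{\infty} - \hat{\bm{w}}_p) + (\hat{\bm{w}}_p - \bm{w}_p)$ in the key inequality and applying Cauchy--Schwarz with the Hoffman bound, I obtain
\begin{align*}
p\|\bm{y}_p - \bm{y}_{\infty}\|^2 \leq \|\bm{w}_{\infty} - \bar{\bm{w}}\| \cdot C\|\bm{y}_p - \bm{y}_{\infty}\|,
\end{align*}
which, after first establishing that $\|\bm{w}_p - \bar{\bm{w}}\|$ is uniformly bounded in $p$ (directly from $f_p(\bm{w}_p, \bm{y}_p) \leq f_p(\bm{w}_{\infty}, \bm{y}_{\infty})$ combined with $\|\bm{y}_p\|^2 \geq \|\bm{y}_{\infty}\|^2$), immediately gives $\|\bm{y}_p - \bm{y}_{\infty}\| = O(1/p)$.

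The final and trickiest step is upgrading this to $\|\bm{w}_p - \bm{w}_{\infty}\| = O(1/p)$; a naive Cauchy--Schwarz only delivers $O(1/\sqrt{p})$. To avoid this loss I would use $F_p(\bm{w}) := f_p(\bm{w}, \bm{c} - \bm{M}(\bm{w}-\bar{\bm{w}}))$, whose Hessian is $\bm{I} + p\bm{M}^{\intercal}\bm{M}$. Along any direction in the range of $\bm{M}^{\intercal}$ this Hessian is bounded below by $1 + p\sigma^2$, where $\sigma > 0$ is the smallest nonzero singular value of $\bm{M}$. Combined with the KKT comparison $\bm{w}_p - \bm{w}_{\infty} = \bm{M}^{\intercal}(p\bm{y}_p - \bm{\lambda}_{\infty}) + (\bm{\mu}_p - \bm{\mu}_{\infty})$ (derived from the KKT systems of \eqref{prob:Pp} and \eqref{Sprob}), the kernel component of $\bm{w}_p - \bm{w}_{\infty}$ vanishes asymptotically under complementary slackness while the range component is controlled by $\bm{M}(\bm{w}_p - \bm{w}_{\infty}) = \bm{y}_{\infty} - \bm{y}_p = O(1/p)$, giving the desired bound. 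The main obstacle is handling the interaction between the active nonneg constraints and the kernel directions of $\bm{M}$, since the multipliers $\bm{\mu}_p$ and $\bm{\mu}_{\infty}$ need not coincide in general; the standard remedy is to argue that for $p$ sufficiently large the active set of $\bm{w}_p$ stabilizes at the active set of $\bm{w}_{\infty}$ (a well-known consequence of strict complementarity, or of general active set identification for quadratic penalty methods), after which the KKT-based rate argument goes through cleanly.
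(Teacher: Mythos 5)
Your opening moves are sound, and in places sharper than the paper's own argument: testing the variational inequality (\ref{POptCond}) at $[\bm{w}_{\infty}, \bm{y}_{\infty}]^{\intercal}$, using the projection characterization of $\bm{y}_{\infty}$ as the minimum-norm achievable residual, and applying a Hoffman bound to the polyhedron $\mathcal{S} = \{ \bm{w} \geq \bm{0} : \bm{M}(\bm{w} - \bar{\bm{w}}) = \bm{c} - \bm{y}_{\infty} \}$ legitimately yields $\| \bm{y}_p - \bm{y}_{\infty} \| = O(1/p)$ and $\| \bm{w}_p - \bm{w}_{\infty} \| = O(1/\sqrt{p})$. (One sign slip: after the rearrangement the last term is $-p \langle \bm{y}_{\infty}, \bm{y}_p - \bm{y}_{\infty} \rangle$, which is $\leq 0$ by the projection characterization and drops out, as you intend.) This already gives $\bm{w}_p \to \bm{w}_{\infty}$, which the paper instead obtains through a longer subsequence/uniqueness argument, and your observation that the range-space component of $\bm{w}_p - \bm{w}_{\infty}$ is controlled at rate $O(1/p)$ via $\bm{M}(\bm{w}_p - \bm{w}_{\infty}) = \bm{y}_{\infty} - \bm{y}_p$ is also correct.

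The genuine gap is your final step. Theorem~\ref{thm:feasibility-detection} carries no nondegeneracy hypothesis, so you cannot invoke strict complementarity or active-set identification: in degenerate problems the active set of $\bm{w}_p$ need not stabilize at (or ever equal) that of $\bm{w}_{\infty}$; it can oscillate among several index sets for arbitrarily large $p$, and then your KKT comparison stalls, since $\bm{\mu}_p$ and $\bm{\mu}_{\infty}$ are supported on different faces and nothing bounds the kernel component of $\bm{w}_p - \bm{w}_{\infty}$. (Note also that the equality multiplier $-p\bm{y}_p$ is unbounded whenever $\gamma > 0$, so the dual comparison needs care even before the active-set issue.) The paper's proof circumvents this entirely: for each $p$ it freezes the active set $\mathcal{B} = \mathcal{B}(p)$ of $\bm{w}_p$, shows $\bm{w}_p$ solves the equality-constrained reduced problem (\ref{BindingProb}), and an SVD computation on the reduced unconstrained problem gives $\bm{w}_p = \bm{v}^{\mathcal{B}} + O(1/p)$, where both $\bm{v}^{\mathcal{B}}$ and the constant depend only on $\mathcal{B}$. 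Since there are finitely many subsets of $\{1, \ldots, n\}$, the constant is uniform in $p$; and since $\bm{w}_p \to \bm{w}_{\infty}$ while the candidate vectors $\bm{v}^{\mathcal{B}}$ form a finite set, necessarily $\bm{v}^{\mathcal{B}(p)} = \bm{w}_{\infty}$ for all $p$ sufficiently large, even if $\mathcal{B}(p)$ never settles. Replacing your last step with this finite-active-set argument, for which your earlier estimates already supply the needed convergence $\bm{w}_p \to \bm{w}_{\infty}$, would close the proof without any complementarity assumption.
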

\begin{proof}
Letting $\gamma = \min_{\bm{w} \geq \bm{0}} \| \bm{c} - \bm{M}(\bm{w} - \bar{\bm{w}}) \|^2$ it follows that $\mathcal{S} = \{ \bm{w} \in \mathbb{R}^n : [\bm{w}, \bm{y}]^{\intercal} \in \mathcal{F}_{\gamma} \}$. So $\mathcal{S} \subseteq \mathcal{F}$ and by defining $\bm{y}_{\infty} = \bm{c} - \bm{M}(\bm{w}_{\infty} - \bar{\bm{w}})$ we have that $[\bm{w}_{\infty}, \bm{y}_{\infty}]^{\intercal} \in \mathcal{F}$. Since $[\bm{w}_p, \bm{y}_p]^{\intercal}$ minimizes the objective in problem (\ref{prob:Pp}), we now have that
\begin{align}
\frac{1}{2} \| \bm{w}_p - \bar{\bm{w}} \|^2 + \frac{p}{2} \| \bm{y}_p \|^2 
&\leq \frac{1}{2} \| \bm{w}_{\infty} - \bar{\bm{w}} \|^2 + \frac{p}{2} \| \bm{y}_{\infty} \|^2 %\nonumber \\
= \frac{1}{2} \| \bm{w}_{\infty} - \bar{\bm{w}} \|^2 + \frac{p}{2} \gamma. \label{objBound1}
\end{align}
Using (\ref{objBound1}) we observe that
%\begin{align}
$\| \bm{w}_p - \bar{\bm{w}} \|^2 
\leq \| \bm{w}_{\infty} - \bar{\bm{w}} \|^2 + p \left( \gamma - \| \bm{y}_p \|^2 \right)
\leq \| \bm{w}_{\infty} - \bar{\bm{w}} \|^2$,
%\end{align}
where the final inequality holds since $\| \bm{y}_p \|^2 \geq \gamma$, for all $p \geq 1$. Hence,
\begin{align}
\| \bm{w}_p \| \leq \| \bm{w}_{\infty} - \bar{\bm{w}} \| + \| \bar{\bm{w}} \|. \label{xpepBound}
\end{align}
Also, from (\ref{objBound1}) it follows that
%\begin{align}
$\| \bm{y}_p \|^2 \leq \frac{1}{p} \| \bm{w}_{\infty} - \bar{\bm{w}} \|^2 + \gamma$ %\label{yinfBound.0}
%\end{align}
which yields that
%\begin{align}
$\lim_{p \to \infty} \| \bm{y}_p \|^2 = \gamma$. %\label{yinfBound}
%\end{align} 
Since $\| \bm{w}_p \|$ is bounded for all values of $p$ in (\ref{xpepBound}), there exist convergent subsequences of $\{ \bm{w}_p \}_{p = 1}^{\infty}$. Letting $\{ \bm{w}_{p_k} \}_{k = 1}^{\infty}$ be a convergent subsequence of $\{ \bm{w}_p \}_{p = 1}^{\infty}$ with limit $\tilde{\bm{w}}$ we observe that 
\begin{align}
\| \bm{c} - \bm{M}(\tilde{\bm{w}} - \bar{\bm{w}}) \|^2 
&= \lim_{k \to \infty} \| \bm{c} - \bm{M}(\bm{w}_{p_k} - \bar{\bm{w}}) \|^2 
= \lim_{k \to \infty} \| \bm{y}_{p_k} \|^2 = \gamma.
\end{align}
So, taking $\tilde{\bm{y}} = \bm{c} - \bm{M}(\tilde{\bm{w}} - \bar{\bm{w}})$, we have
%\begin{align}
$\lim_{k \to \infty} [ \bm{w}_{p_k}, \bm{y}_{p_k} ]^{\intercal} 
= [ \tilde{\bm{w}}, \tilde{\bm{y}} ]^{\intercal} \in \mathcal{F}_{\gamma}$.
%\end{align}
Hence, $\tilde{\bm{w}} \in \mathcal{S}$. 

We now claim that $\tilde{\bm{w}} = \bm{w}_{\infty}$. Observing the optimality conditions in (\ref{problemS}) it follows that we need to show that $\langle \tilde{\bm{w}} - \bar{\bm{w}}, \bm{w} - \tilde{\bm{w}} \rangle \geq 0$, for all $\bm{w} \in \mathcal{S}$. Let $p \geq 1$ be given. By the optimality conditions in (\ref{POptCond}), we have that 
\begin{align}
\langle \bm{w}_p - \bar{\bm{w}}, \bm{w} - \bm{w}_p \rangle 
&\geq p \langle \bm{y}_p, \bm{y}_p - \bm{y} \rangle, \label{tildeOpt} 
\end{align}
for all $[\bm{w}, \bm{y}]^{\intercal} \in \mathcal{F}$. As $\mathcal{S} = \{ \bm{w} \in \mathbb{R}^n : [\bm{w}, \bm{y}]^{\intercal} \in \mathcal{F}_{\gamma} \}$ and $\mathcal{F}_{\gamma} \subseteq \mathcal{F}$, it suffices to show that $\langle \bm{y}_p, \bm{y}_p - \bm{y} \rangle \geq 0$, for all $\bm{y} \in \mathbb{R}^m$ with $\| \bm{y} \|^2 = \gamma$. Accordingly, let $\bm{y} \in \mathbb{R}^m$ with $\| \bm{y} \|^2 = \gamma$ be given and observe that 
\begin{align}
\langle \bm{y}_p, \bm{y}_p - \bm{y} \rangle 
%&= \langle \bm{y}_p, \bm{y}_p \rangle - \langle \bm{y}_p, \bm{y} \rangle \\
&= \| \bm{y}_p \|^2 - \frac{1}{2} \left( \| \bm{y}_p \|^2 - \| \bm{y}_p - \bm{y} \|^2 + \| \bm{y} \|^2 \right) 
= \frac{1}{2} \left( \| \bm{y}_p \|^2 + \| \bm{y}_p - \bm{y} \|^2 - \gamma \right). \label{ypBound2}
\end{align}
By our choice of $\gamma$ we have that $\| \bm{y}_p \|^2 \geq \gamma$. This fact together with the inequality in (\ref{ypBound2}) implies that
%\begin{align}
$\langle \bm{y}_p, \bm{y}_p - \bm{y} \rangle 
\geq \frac{1}{2} \| \bm{y}_p - \bm{y} \|^2 \geq 0$. %\label{ypBound3}
%\end{align}
Substituting 
this %(\ref{ypBound3}) 
into (\ref{tildeOpt}) yields that $\langle \bm{w}_p - \bar{\bm{w}}, \bm{w} - \bm{w}_p \rangle \geq 0$, for all $\bm{w} \in \mathcal{S}$. Thus,
%\begin{align}
$\langle \tilde{\bm{w}} - \bar{\bm{w}}, \bm{w} - \tilde{\bm{w}} \rangle 
= \lim_{k \to \infty} \langle \bm{w}_{p_k} - \bar{\bm{w}}, \bm{w} - \bm{w}_{p_k} \rangle 
\geq 0$, 
%\end{align}
for all $\bm{w} \in \mathcal{S}$. So we conclude that $\tilde{\bm{w}} = \bm{w}_{\infty}$.

Next we claim that the entire sequence $\{ \bm{w}_p \}_{p = 1}^{\infty}$ converges to $\bm{w}_{\infty}$. By way of contradiction, suppose that there exists a subsequence $\{ \bm{w}_{p_k} \}_{k = 1}^{\infty}$ and a $\delta > 0$ such that $\| \bm{w}_{p_k} - \bm{w}_{\infty} \| \geq \delta$ for all $k \geq 1$. Since $\{ \bm{w}_{p_k} \}_{k = 1}^{\infty}$ is bounded it has a convergent subsequence, say $\{ \bm{w}_{q_k} \}_{k = 1}^{\infty}$. As $\{ \bm{w}_{q_k} \}_{k = 1}^{\infty}$ is a subsequence of $\{ \bm{w}_p \}_{p = 1}^{\infty}$ and every convergent subsequence of $\{ \bm{w}_p \}_{p = 1}^{\infty}$ converges to $\bm{w}_{\infty}$, there exists a $M \in \mathbb{N}$ such that $\| \bm{w}_{q_k} - \bm{w}_{\infty} \| < \delta$ for all $k \geq M$, a contradiction. Thus, every subsequence of $\{ \bm{w}_{p} \}_{p = 1}^{\infty}$ converges to $\bm{w}_{\infty}$ implying that $\bm{w}_{p} \to \bm{w}_{\infty}$ as $p \to \infty$.

Lastly, it remains to show that $\| \bm{w}_p - \bm{w}_{\infty} \| = O(1/p)$. Accordingly, let $p \geq 1$ be given, let $\mathcal{B} := \{ i : \left( \bm{w}_p \right)_i = 0 \text{ and } 1 \leq i \leq n \}$ denote the active set at point $\bm{w}_p$, and let $\mathcal{I} := \{ 1, 2, \ldots, n \} \setminus \mathcal{B}$. We now consider the problem
\begin{align}
\min_{\bm{w}_{\mathcal{B}} = 0} \ f (\bm{w}) := \frac{1}{2} \| \bm{w} - \bar{\bm{w}} \|^2 + \frac{p}{2} \| \bm{c} + \bm{M} \bar{\bm{w}} - \bm{M} \bm{w} \|^2 \label{BindingProb}
\end{align}
where $\bm{w}_{\mathcal{B}}$ is the vector with components $w_i$, for all $i \in \mathcal{B}$. Letting $\bm{u}^*$ denote the solution to problem (\ref{BindingProb}), we claim that $\bm{u}^* = \bm{w}_p$. In order to justify this claim, first note that since $\bm{w}_p$ satisfies the constraint for problem (\ref{BindingProb}) we have that
\begin{align}
f(\bm{u}^*) 
&\leq f(\bm{w}_p) 
= f_p (\bm{w}_p, \bm{y}_p). \label{BindingProbIneq}
\end{align}
%%%\begin{align}
%%%\frac{1}{2} \| u^* - \bar{x} \|^2 + \frac{p}{2} \| b - A(u^* - \bar{x}) \|^2 &\leq \frac{1}{2} \| x_p - \bar{x} \|^2 + \frac{p}{2} \| b - A(x_p - \bar{x}) \|^2 = f_p (x_p, y_p). \label{BindingProbIneq}
%%%\end{align}
Now suppose that $\bm{u}^* \neq \bm{w}_p$. By (\ref{BindingProbIneq}) and the strict convexity of $f$ we have that $f(\bm{w}) < f(\bm{w}_p)$, for all $\bm{w} \in [\bm{w}_p, \bm{u}^*]$, that is for all $\bm{w}$ on the line segment connecting $\bm{w}_p$ and $\bm{u}^*$. By our choice of $\mathcal{B}$, $(\bm{w}_p)_i > 0$ for all $i \in \mathcal{I}$ and $(\bm{w}_p)_i = 0 = \bm{u}^*_i$ for all $i \in \mathcal{B}$. Hence, $\bm{w} \geq 0$ for all $\bm{w} \in [\bm{w}_p, \bm{u}^*]$ sufficiently close to $\bm{w}_p$, a contradiction. 
% NOTES ON CONTRADICTION: The contradiction follows since the points in $[x_p, u^*]$ are of the form $t x_p + (1 - t) u^*$, for $t \in [0, 1]$. Hence, for some $i \in \mathcal{I}$ and some range of values of $t \in [0,1]$, we have that $t (x_p)_i + (1 - t) (u^*)_i < 0$, which is a contradiction.
Thus, $\bm{u}^* = \bm{w}_p$.

Now observe that we can reformulate problem (\ref{BindingProb}) as the unconstrained optimization problem
\begin{align}
\min_{\bm{w}_+ \in \mathbb{R}^{|\mathcal{I}|}} \ \ \ \ g(\bm{w}_+) := \frac{1}{2} \| \bm{w}_+ - \bar{\bm{w}}_+ \|^2 + \frac{p}{2} \| \bm{c} + \bm{M} \bar{\bm{w}} - \bm{M}_+ \bm{w}_+ \|^2 \label{BindingProb1}
\end{align}
where $\bar{\bm{w}}_+ \in \mathbb{R}^{|\mathcal{I}|}$ is the vector with components $\bar{w}_i$, for all $i \in \mathcal{I}$, and $\bm{M}_+$ is the matrix with columns $M_i$ of $\bm{M}$, for all $i \in \mathcal{I}$. Since problem (\ref{BindingProb1}) is an unconstrained optimization problem with a strongly convex objective function, the first order optimality conditions are given by
\begin{align}
\bm{w}_+ \ \text{minimizes} \ g(\bm{w}) 
\ \ \text{if and only if} \ \ \nabla g (\bm{w}_+) = \frac{1}{p} \left( \bm{w}_+ - \bar{\bm{w}}_+ \right) - \bm{M}_+^{\intercal} \left( \bm{c} + \bm{M} \bar{\bm{w}} - \bm{M}_+ \bm{w}_+ \right) = 0. \label{gradientEq}
\end{align}
Letting $\bm{Q} \bm{\Sigma} \bm{P}^{\intercal}$ be a singular value decomposition of $\bm{M}_+$ and performing the change of variables $\bm{w}_+ = \bm{P} \bm{z}$ we obtain
%\begin{align}
$\nabla g (\bm{w}_+) 
%&= \frac{1}{p} \left( \bm{P} \bm{z} - \bar{\bm{w}}_+ \right) - \bm{P} \bm{\Sigma}^{\intercal} \bm{Q}^{\intercal} \left( \bm{c} + \bm{M} \bar{\bm{w}} - \bm{Q} \bm{\Sigma} \bm{P}^{\intercal} \bm{P} \bm{z} \right) \\
= \bm{P} \left( \frac{1}{p} \bm{z} + \bm{\Sigma}^{\intercal} \bm{\Sigma} \bm{z} \right) - \bm{P} \bm{\Sigma}^{\intercal} \bm{Q}^{\intercal} \left( \bm{c} + \bm{M} \bar{\bm{w}} \right) - \frac{1}{p} \bar{\bm{w}}_+$.
%\end{align}
Hence, multiplying both sides of equation (\ref{gradientEq}) by $\bm{P}^{\intercal}$ and simplifying yields
\begin{align}
\left( \frac{1}{p} \bm{I} + \bm{\Sigma}^{\intercal} \bm{\Sigma} \right) \bm{z} 
= \bm{\Sigma}^{\intercal} \bm{Q}^{\intercal} \left( \bm{c} + \bm{M} \bar{\bm{w}} \right) + \frac{1}{p} \bm{P}^{\intercal} \bar{\bm{w}}_+. \label{gradientEq1}
\end{align}
Now let $r = rank(\bm{M}_+)$ and let $\sigma_1 \geq \sigma_2 \geq \ldots \geq \sigma_r$ denote the non-zero singular values of $\bm{M}_+$. From equation (\ref{gradientEq1}), for $1 \leq i \leq r$, we have 
%\begin{align}
$\left( \frac{1}{p} + \sigma_i^2 \right) z_i = \sigma_i Q_i^{\intercal} (\bm{c} + \bm{M} \bar{\bm{w}}) + \frac{1}{p} \left( \bm{P}^{\intercal} \bar{\bm{w}}_+ \right)_i$,
%\end{align}
or equivalently, 
\begin{align}
z_i 
&= \frac{\sigma_i}{\sigma_i^2 + 1/p} Q_i^{\intercal} (\bm{c} + \bm{M} \bar{\bm{w}}) + \frac{1/p}{\sigma_i^2 + 1/p} \left( \bm{P}^{\intercal} \bar{\bm{w}}_+ \right)_i
= \frac{1}{\sigma_i} Q_i^{\intercal} (\bm{c} + \bm{M} \bar{\bm{w}}) + O \left( \frac{1}{p} \right). \label{zSol}
\end{align}
For $i > r$ it also follows from  equation (\ref{gradientEq1}) that
%\begin{align}
$z_i = \left( \bm{P}^{\intercal} \bar{\bm{w}}_+ \right)_i$. %\label{zSol1}
%\end{align} 
%From (\ref{zSol}) and (\ref{zSol1}) we have 
Combining these observations with (\ref{zSol}) yields that there exists a vector $\bm{u}^{\mathcal{B}} \in \mathbb{R}^n$ such that $\bm{z} = \bm{u}^{\mathcal{B}} + O \left( 1/p \right)$. As $\bm{w}_+ = \bm{P} \bm{z}$, we have that $\bm{w}_+ = \bm{P} \bm{w}^{\mathcal{B}} + O \left( 1/p \right)$. Since $\bm{w}_+$ is comprised of the positive components of $\bm{w}_p$, we conclude that 
\begin{align}
\bm{w}_p = \bm{v}^{\mathcal{B}} + O \left( 1/p \right) \label{xpFormula}
\end{align}
where $\bm{v}^{\mathcal{B}} \in \mathbb{R}^n$ is the vector satisfying $\left( \bm{v}^{\mathcal{B}} \right)_{\mathcal{B}} = 0$ and $\left( \bm{v}^{\mathcal{B}} \right)_{\mathcal{I}} = \bm{P} \bm{u}^{\mathcal{B}}$.
%%%is defined by 
%%%\begin{displaymath}
%%%\left( v_{\mathcal{B}} \right)_i = \left\{
%%%\begin{array}{lr}
%%%\left( w_{\mathcal{B}} \right)_i &: i \in \mathcal{B} \\
%%%0 &: i \in \mathcal{I}
%%%\end{array}
%%%\right.
%%%\end{displaymath}
From equation (\ref{xpFormula}) there exists a constant $c_{\mathcal{B}} \geq 0$ such that 
%\begin{align}
$\left\| \bm{w}_p - \bm{v}^{\mathcal{B}} \right\| 
\leq \frac{c_{\mathcal{B}}}{p}$. 
%\label{linearRate}
%\end{align}
Moreover, since the number of subsets of $\{ 1, 2, \ldots, n \}$ is finite, taking
%\begin{align}
$c := \max_{\mathcal{D} \subseteq \{ 1, \ldots, n \}} c_{\mathcal{D}}$
%\end{align}
yields
%\begin{align}
$\left\| \bm{w}_p - \bm{v}^{\mathcal{B}} \right\| \leq \frac{c}{p}$. 
%\label{linearRate1}
%\end{align}
As $p \geq 1$ was taken to be arbitrary, we conclude that this inequality 
%(\ref{linearRate1}) 
holds for all $p \geq 1$. 

Finally, we claim that $\bm{v}^{\mathcal{B}} = \bm{w}_{\infty}$, for $p$ sufficiently large. By way of contradiction, suppose that there exists a $\delta > 0$ such that $\left\| \bm{v}^{\mathcal{B}} - \bm{w}_{\infty} \right\| \geq \delta$, for all $p$ sufficiently large. Since $\bm{w}_p \to \bm{w}_{\infty}$ as $p \to \infty$ and $\left\| \bm{w}_p - \bm{v}^{\mathcal{B}} \right\| \leq \frac{c}{p}$ there exists a $N > 0$ such that $\| \bm{w}_p - \bm{w}_{\infty} \| < \frac{1}{2} \delta$ and $\left\| \bm{w}_p - \bm{v}^{\mathcal{B}} \right\| < \frac{1}{2} \delta$, for all $p \geq N$. Hence, for all $p \geq N$, it follows that
%\begin{align}
$\left\| \bm{v}^{\mathcal{B}} - \bm{w}_{\infty} \right\| 
%= \left\| \bm{v}^{\mathcal{B}} - \bm{w}_p + \bm{w}_p - \bm{w}_{\infty} \right\|
\leq \left\| \bm{v}^{\mathcal{B}} - \bm{w}_p \right\| +  \| \bm{w}_p - \bm{w}_{\infty} \|
< \delta$,
%\end{align}
a contradiction. Since for subsets $\mathcal{B} \subseteq \{ 1, 2, \ldots, n \}$ there are only finitely many such vectors $\bm{v}^{\mathcal{B}}$, we have that $\bm{v}^{\mathcal{B}} = \bm{w}_{\infty}$ for all $p$ sufficiently large. Therefore,
%\begin{align}
$\| \bm{w}_p - \bm{w}_{\infty} \| \leq \frac{c}{p}$
%\end{align}
for sufficiently large $p$.
\end{proof}

\iffalse
As a final comment for this section, note that the proof of Theorem~\ref{thm:feasibility-detection} provides some information on the size of the perturbation $\| \bm{y}_p \|$. In particular, from inequality (\ref{yinfBound.0}) there exists a constant $c$ such that $\| \bm{y}_p \| \leq \sqrt{\varepsilon + c^2/p}$. Hence, if the constraint set is feasible without perturbation then we have that $\varepsilon = 0$ which yields that $\| \bm{y}_p \| \leq \frac{c}{\sqrt{p}}$. We now turn our attention to establishing convergence results for each phase of NPASA.
\fi

%%%%%%%%%%%%%%%%%%%%%%%%%%%%%%%%%%%%%%%%%%%%%%%%%%%%%%%%%%%%%%%%%%%%%%%%%%%%%%%
%%% ----------------------- CONSTRAINT STEP ANALYSIS ---------------------- %%%
%%%%%%%%%%%%%%%%%%%%%%%%%%%%%%%%%%%%%%%%%%%%%%%%%%%%%%%%%%%%%%%%%%%%%%%%%%%%%%%
\subsection{Convergence Analysis for Constraint Step} \label{subsec:constraint-step-analysis}
In this section, we provide a convergence result for the constraint step in the LS algorithm. In particular, we establish a linear convergence rate for the constraint step scheme. Recall the iterative scheme for determining $\bm{w} \in \Omega$ such that $\bm{h} (\bm{w}) = \bm{0}$ used in the constraint step of the LS algorithm:
\begin{align}
(\bm{w}_{i+1}, \bm{y}_{i+1}) &= \arg\min \left\{\| \bm{w} - \bm{w}_i \|^2 + \| \bm{y} \|^2 : \nabla \bm{h} (\bm{w}_i) (\bm{w} - \bm{w}_i) + \frac{1}{\sqrt{p_i}} \bm{y} = - \bm{h} (\bm{w}_i), \bm{w} \in \Omega \right\} \label{NewtonYScheme} \\
\bm{w}_{i+1} &\gets \bm{w}_i + s_i ( \bm{w}_{i+1} - \bm{w}_i) \label{NewtonYScheme.0}
\end{align}
where $p_i \geq 1$ is a penalty parameter and where $s_i \leq 1$ is chosen such that
\begin{align}
\| \bm{h} (\bm{w}_i + s_i (\bm{w}_{i+1} - \bm{w}_{i})) \| \leq (1 - \tau \alpha_i s_i) \| \bm{h} (\bm{w}_i) \|. \label{NewtonArmijo}
\end{align} 
Here, $\alpha_i$ is defined by $\alpha_i := 1 - \| \bm{y}_i \|$ and to continue performing the scheme we require that $\alpha_i \in [\alpha, 1]$ for some fixed parameter $\alpha > 0$. Note that this is the iterative scheme in the constraint step of the LS algorithm, Algorithm~\ref{alg:cms}, where we have made the change of variables $\bm{y} \gets \frac{1}{\sqrt{p_i}} \bm{y}$ to simplify the analysis in Lemma~\ref{lem:Lem6.1}. For problems in (\ref{NewtonYScheme}) where the constraint set is nonempty for $\bm{y} = \bm{0}$, sufficiently large choices of $p$ will result in $\| \bm{y} \| < 1 - \alpha$, or equivalently $\alpha_i \geq \alpha$. For problems in (\ref{NewtonYScheme}) where the constraint set is empty for $\bm{y} = \bm{0}$ we may not be able to satisfy the requirement $\alpha_i > 0$, let alone $\alpha_i \geq \alpha$. Recall that in such a case NPASA branches to phase one and performs a global step. 
%%%% -------- Not necessary for the following analysis -------- %%%%
\iffalse
Since this scheme involves the problem considered in Proposition \ref{PpProp}, we define 
\begin{align}
\bm{w}_{i+1, \infty} = \arg \min_{\bm{w} \in \mathcal{S}_i} \| \bm{w} - \bm{w}_i \|^2 
\end{align}
where $\mathcal{S}_i = \arg \min \left\{ \| \bm{h} (\bm{w}_i) + \nabla \bm{h} (\bm{w}_i) (\bm{w} - \bm{w}_i) \|^2, \bm{w} \geq 0 \right\}$. Additionally, we define 
\begin{align}
\bm{y}_{i+1, \infty} = \bm{h} (\bm{w}_i) + \nabla \bm{h} (\bm{w}_i) (\bm{w}_{i+1, \infty} - \bm{w}_i).
\end{align}
\fi
%%%% ---------------------------------------------------------- %%%%
An approach similar to (\ref{NewtonYScheme}) -- (\ref{NewtonYScheme.0}) was previously considered 
%in the literature 
where iterates are given by 
\begin{align}
\bm{w}_{i+1} &= \arg\min \left\{\| \bm{w} - \bm{w}_i \|^2 : \nabla \bm{h} (\bm{w}_i) (\bm{w} - \bm{w}_i) = - \bm{h} (\bm{w}_i), \bm{w} \in \Omega \right\} \label{NewtonScheme} \\
\bm{w}_{i+1} &\gets \bm{w}_i + s_i (\bm{w}_{i+1} - \bm{w}_{i}) \label{NewtonScheme.0}
\end{align}
and $s_i \leq 1$ is chosen using (\ref{NewtonArmijo}). Motivation and discussion of the scheme in (\ref{NewtonScheme}) -- (\ref{NewtonScheme.0}) can be found in \cite{Daniel1973, Hager1993, Robinson1972}. In particular, in Section~3 of \cite{Daniel1973} Daniel considered the solvability of the constraint $\nabla \bm{h} (\bm{w}_i) (\bm{w} - \bm{w}_i) = - \bm{h} (\bm{w}_i)$ found in (\ref{NewtonScheme}). By introducing the slack variable, $\bm{y}$, into our update scheme, it is no longer necessary to consider the solvability of this problem as the constraint set is always nonempty. However, %the convergence analysis performed in Theorem 2.1 of \cite{Hager1993} 
it is necessary to perform updated convergence analysis for scheme (\ref{NewtonScheme}) -- (\ref{NewtonScheme.0}) to account for the presence of the slack variable $\bm{y}$ and penalty parameter $p_i$ in (\ref{NewtonYScheme}). 

%With global convergence established, we consider some additional lemmas illustating that phase two may not branch back to phase one even when when the current iterate is not located in a neighborhood of a stationary point. 

Before presenting this result, we consider some heuristics for the constraint step that should encourage global convergence of NPASA. One of the updates in this implementation of the primal-dual method from \cite{Hager1993} is the use of the Newton step with feasibility detection in the constraint step of the algorithm. Previously, the convergence analysis required the assumption that the linearized constraints in (\ref{NewtonScheme}) had a solution at each iterate $i$. While we no longer require this assumption, we do require some method for deciding when a perturbation $\| \bm{y}_{i+1} \|$ in (\ref{NewtonYScheme}) is small enough for us to continue with the constraint step iteration. From our analysis in Subection~\ref{subsec:infeas}, in particular inequality (\ref{objBound1}), we have that if the linearized constraints are feasible then $\| \bm{y}_{i+1} \| = O \left( \frac{1}{\sqrt{p_i}} \right)$. With this in mind, suppose now that we fix two parameters $\alpha \in (0, 1]$ and $\beta \geq 1$ such that $\frac{1}{\beta} < 1 - \alpha$. Then we choose our penalty parameter $p_i = \max \left\{ \beta^2, \| \bm{h} (\bm{w}_i) \|^{-2} \right\}$ and consider the linearized constraints to be feasible when $\| \bm{y}_{i+1} \| \leq 1 - \alpha$. As $p_i \geq \beta^2$, if the linearized constraint set is feasible without perturbation, then under the updated scheme in (\ref{NewtonYScheme}) we have that $\| \bm{y}_{i+1} \| = O \left( \frac{1}{\sqrt{p_i}} \right) = O \left( \frac{1}{\beta} \right)$. By our choice of $\alpha$ and $\beta$, we now have a good chance of satisfying the criterion $\| \bm{y}_{i+1} \| \leq 1 - \alpha$ when the linearized constraint is feasible without perturbation. If it is infeasible by a perturbation $\gamma$ in norm, then from inequality (\ref{objBound1}) we have that $\| \bm{y}_{i+1} \|^2 \leq \frac{c}{p_i} + \gamma$. Hence, for small perturbations we still may be able to accept iterates and move towards satisfying the constraint step. %As an example, one suitable choice for these parameters might be $\alpha = 0.5$ and $\beta = 10$. A more agressive approach might take $\alpha = 0.5$ and $\beta = 100$. Note that to enforce smaller perturbation sizes $\alpha$ should be taken closer to one.

%The following result is similar to Lemma 6.1 in \cite{Hager1993} but accounts for the modifications to the constraint step and our heuristics for choosing a penalty parameter $p_i$. %This result establishes a global linear convergence rate for the constraint step in the CMS algorithm. %that requires far fewer assumptions than the local quadratic convergence result established in Section~\ref{section:npasa-phase-two}. 

\begin{lemma} \label{lem:Lem6.1}
%Let $\alpha \in (0,1]$ and $\beta \geq 1$ be given such that $\frac{1}{\beta} < 1 - \alpha$. 
Suppose that there exists a constant $M$ such that $\| \bm{w}_{i+1} - \bm{w}_i \| \leq M$ for every $i$ %, that there exists an $\alpha \in (0,1]$ such that $\| \bm{y}_{i+1} \| \leq 1 - \alpha$ for every $i$, 
and there exists a constant $r > 0$ such that $\nabla \bm{h}$ is Lipschitz continuous with modulus $\kappa$ in the ball $B (\bm{w}_i, r)$, for each $i$. Then for each $i$ where $\bm{h} (\bm{w}_i) \neq \bm{0}$ and $\| \bm{y}_i \| \leq 1 - \alpha$, the iterate $\bm{w}_{i+1}$ defined in (\ref{NewtonYScheme}) -- (\ref{NewtonYScheme.0}) exists and satisfies
\begin{align}
\| \bm{h} (\bm{w}_{i+1}) \| \leq (1 - \gamma_i \alpha_i \tau) \| \bm{h} (\bm{w}_i) \|,
\end{align}
where $\alpha_i = 1 - \| \bm{y}_i \|$ and 
\begin{align}
\gamma_i = \min \left\{ 1, \frac{r \sigma}{M}, \frac{2 \alpha \sigma (1 - \tau) \| \bm{h} (\bm{w}_i) \|}{\kappa M^2} \right\}.
\end{align}
Thus, either $\bm{h} (\bm{w}_i) = \bm{0}$ after a finite number of iterations or $\displaystyle \lim_{i \to \infty} \bm{h} (\bm{w}_i) = \bm{0}$.
\end{lemma}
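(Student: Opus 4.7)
The plan is in four steps. \textbf{Existence.} The feasible set in (\ref{NewtonYScheme}) is nonempty, because $\bm{w} = \bm{w}_i \in \Omega$ together with $\bm{y} = -\sqrt{p_i}\,\bm{h}(\bm{w}_i)$ satisfies the linear equality. Since the objective is strongly convex on this closed convex set, a unique minimizer $(\bm{w}_{i+1}, \bm{y}_{i+1})$ exists; set $\bm{d}_i := \bm{w}_{i+1} - \bm{w}_i$, so $\|\bm{d}_i\| \leq M$ by assumption.

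\textbf{Linearization estimate.} Taylor's theorem together with the Lipschitz continuity of $\nabla\bm{h}$ on $B(\bm{w}_i, r)$ gives, for any $s \in [0,1]$ with $s\|\bm{d}_i\| \leq r$,
\begin{align*}
\|\bm{h}(\bm{w}_i + s\bm{d}_i) - \bm{h}(\bm{w}_i) - s \nabla\bm{h}(\bm{w}_i)\bm{d}_i\| \leq \tfrac{\kappa s^2 M^2}{2}.
\end{align*}
Substituting the linearized constraint $\nabla\bm{h}(\bm{w}_i)\bm{d}_i = -\bm{h}(\bm{w}_i) - \tfrac{1}{\sqrt{p_i}}\bm{y}_i$ (matching the lemma's indexing of the slack), using $1/\sqrt{p_i} \leq \|\bm{h}(\bm{w}_i)\|$ from the penalty rule $p_i \geq \|\bm{h}(\bm{w}_i)\|^{-2}$, and using $\|\bm{y}_i\| = 1 - \alpha_i$, the triangle inequality then yields
\begin{align*}
\|\bm{h}(\bm{w}_i + s\bm{d}_i)\| \leq (1 - s\alpha_i)\|\bm{h}(\bm{w}_i)\| + \tfrac{\kappa s^2 M^2}{2}.
\end{align*}

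\textbf{Armijo success and step-size lower bound.} The Armijo condition (\ref{NewtonArmijo}) is implied by $\tfrac{\kappa s^2 M^2}{2} \leq s \alpha_i (1-\tau) \|\bm{h}(\bm{w}_i)\|$; combined with $s \leq r/M$ (so that $s\|\bm{d}_i\| \leq r$), $s \leq 1$, and $\alpha_i \geq \alpha$ from the hypothesis, every $s \leq s^* := \min\{1,\, r/M,\, \tfrac{2\alpha(1-\tau)\|\bm{h}(\bm{w}_i)\|}{\kappa M^2}\}$ satisfies (\ref{NewtonArmijo}). Since the line search starts at $s = 1$ and backtracks by $\sigma < 1$, the step $s_i = 1$ is accepted when $s^* \geq 1$; for $s^* < 1$ the first iterate $\sigma^k$ landing in $(\sigma s^*, s^*]$ is accepted, giving $s_i \geq \sigma s^*$. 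In either case $s_i \geq \gamma_i$, and applying (\ref{NewtonArmijo}) at $s = s_i$ yields $\|\bm{h}(\bm{w}_{i+1})\| \leq (1 - \gamma_i \alpha_i \tau) \|\bm{h}(\bm{w}_i)\|$.

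\textbf{Limit and main difficulty.} If $\bm{h}(\bm{w}_{i^*}) = \bm{0}$ at some iteration we are done. Otherwise $\|\bm{h}(\bm{w}_i)\|$ is strictly positive and nonincreasing, so it converges to some $\delta \geq 0$; were $\delta > 0$, then $\gamma_i$ would be bounded below by a positive constant (its only $i$-dependent factor involves $\|\bm{h}(\bm{w}_i)\| \geq \delta$), so $1 - \gamma_i \alpha_i \tau$ would stay bounded away from $1$ and telescoping would force $\|\bm{h}(\bm{w}_i)\| \to 0$, a contradiction; hence $\delta = 0$. The main obstacle is controlling the slack term $\tfrac{s}{\sqrt{p_i}}\|\bm{y}_i\|$ in Step 2: the penalty rule $p_i \geq \|\bm{h}(\bm{w}_i)\|^{-2}$ and the hypothesis $\|\bm{y}_i\| \leq 1-\alpha$ are precisely what collapse the linear and slack contributions into the clean factor $(1 - s\alpha_i)$ needed for the Armijo comparison, and the case split between $s^* \geq 1$ and $s^* < 1$ is what produces the particular form of $\gamma_i$ asserted.
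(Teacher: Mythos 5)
Your proof is correct and follows essentially the same route as the paper's: the fundamental-theorem-of-calculus linearization bound, absorption of the slack term via $p_i \geq \|\bm{h}(\bm{w}_i)\|^{-2}$ and $\alpha_i = 1 - \|\bm{y}_i\|$, and a lower bound on the accepted Armijo step yielding the three-term minimum $\gamma_i$ (your sufficient threshold $s^*$ with $s_i \geq \sigma s^*$ is just a repackaging of the paper's case split on whether the failed trial $\sigma^{-1} s_i$ left the Lipschitz ball). You additionally spell out two points the paper leaves implicit, namely feasibility/existence of $(\bm{w}_{i+1}, \bm{y}_{i+1})$ via $(\bm{w}_i, -\sqrt{p_i}\,\bm{h}(\bm{w}_i))$ and the telescoping argument for the final limit claim, both of which are correct.
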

\begin{proof}
First note that by the definition of $\alpha_i$ and our hypothesis that $\| \bm{y}_i \| \leq 1 - \alpha$ it follows that $\alpha \leq \alpha_i$. Now let $\bm{w}, \bm{d} \in \mathbb{R}^n$ and $s \in \mathbb{R}$ such that $\bm{h}$ is $r$-Lipschitz continuous on the line segment $[\bm{w}, \bm{w} + s \bm{d}]$. By the fundamental theorem of calculus it follows that
\begin{align}
\bm{h} (\bm{w} + s \bm{d}) 
%&= \bm{h} (\bm{w}) + \int_0^s \nabla \bm{h} (\bm{w} + t \bm{d}) \bm{d} \ dt \\
&= \bm{h} (\bm{w}) + s \nabla \bm{h} (\bm{w}) \bm{d} + \int_0^s \left( \nabla \bm{h} (\bm{w} + t \bm{d}) - \nabla \bm{h} (\bm{w}) \right) \bm{d} \ dt. \label{eq:6.1.1}
\end{align}
By (\ref{NewtonYScheme}), we have that
\begin{align}
\nabla \bm{h} (\bm{w}_i) (\bm{w}_{i+1} - \bm{w}_i) = - \bm{h} (\bm{w}_i) - \frac{1}{\sqrt{p}} \bm{y}_i. \label{eq:6.1.2}
\end{align}
Hence, setting $\bm{w} = \bm{w}_i$, $\bm{y} = \bm{y}_i$, and $\bm{d} = \bm{w}_{i+1} - \bm{w}_i$  in (\ref{eq:6.1.1}) and combining the resulting equation with (\ref{eq:6.1.2}) we have that
\begin{align}
\| \bm{h} (\bm{w}_i + s (\bm{w}_{i+1} - \bm{w}_i)) \| 
&\leq \| \bm{h} (\bm{w}_i) + s \nabla \bm{h} (\bm{w}_i) (\bm{w}_{i+1} - \bm{w}_i) \| + \frac{\kappa s^2 M^2}{2} \\
&\leq \| \bm{h} (\bm{w}_i) - s \bm{h} (\bm{w}_i) \| +  \frac{s}{\sqrt{p}} \| \bm{y}_i \| + \frac{\kappa s^2 M^2}{2} \\
&\leq (1 - s) \| \bm{h} (\bm{w}_i) \| + s \| \bm{y}_i \| \| \bm{h}(\bm{w}_i) \| + \frac{\kappa s^2 M^2}{2} \\
&= \left(1 - s \left( 1 - \| \bm{y}_i \| \right) \right) \| \bm{h} (\bm{w}_i) \| + \frac{\kappa s^2 M^2}{2} \\
&= \left(1 - \alpha_i s \right) \| \bm{h} (\bm{w}_i) \| + \frac{\kappa s^2 M^2}{2}. \label{eq:6.1.2.1}
\end{align}
Now assume that $\bm{h} (\bm{w}_i) \neq \bm{0}$. By our choice of $s_i$ in NPASA, we have that either
%\begin{align}
$s_i = 1$
\label{eq:6.1.3}
%\end{align}
or
\begin{align}
(1 - \tau \alpha_i \sigma^{-1} s_i) \| \bm{h} (\bm{w}_i) \| \leq \| \bm{h} (\bm{w}_i - \sigma^{-1} s_i (\bm{w}_{i+1} - \bm{w}_i)) \|. \label{eq:6.1.4}
\end{align}
%If (\ref{eq:6.1.3}) is satisfied then we know the value of $s_i$.
If $s_i = 1$ then there is nothing to be done. Hence, we suppose that (\ref{eq:6.1.4}) is satisfied and consider two subcases.\\
\noindent \emph{Case} ($i$): $\sigma^{-1} s_i \| \bm{w}_{i+1} - \bm{w}_i \| > r$. By our hypothesis that $\| \bm{w}_{i+1} - \bm{w}_i \| \leq M$ we have the lower bound
\begin{align}
s_i > \frac{r \sigma}{\| \bm{w}_{i+1} - \bm{w}_i \|} \geq \frac{r \sigma}{M}. \label{eq:6.1.5}
\end{align}

\noindent \emph{Case} ($ii$): $\sigma^{-1} s_i \| \bm{w}_{i+1} - \bm{w}_i \| \leq r$. 
First, note that by setting $s = \sigma^{-1} s_i$ in (\ref{eq:6.1.2.1}) we have that
\begin{align}
\| \bm{h} (\bm{w}_i + \sigma^{-1} s_i (\bm{w}_{i+1} - \bm{w}_i)) \| 
&\leq \left(1 - \alpha_i \sigma^{-1} s_i \right) \| \bm{h} (\bm{w}_i) \| + \frac{\kappa s_i^2 M^2}{2 \sigma^2}. \label{eq:6.1.6}
\end{align}
Combining (\ref{eq:6.1.4}) with (\ref{eq:6.1.6}) yields
%\begin{align}
$(1 - \tau \alpha_i \sigma^{-1} s_i) \| \bm{h} (\bm{w}_i) \|
\leq \left(1 - \alpha_i \sigma^{-1} s_i \right) \| \bm{h} (\bm{w}_i) \| + \frac{\kappa s_i^2 M^2}{2 \sigma^2}$,
%\end{align}
which, when simplified, gives us
\begin{align}
\frac{2 \alpha_i \sigma (1 - \tau) \| \bm{h} (\bm{w}_i) \|}{\kappa M^2}
&\leq s_i. \label{eq:6.1.7}
\end{align}
Lastly, note that by the definition of $\alpha_i$ and our hypothesis that $\| \bm{y}_i \| \leq 1 - \alpha$ it follows that $\alpha \leq \alpha_i$. Thus, by (\ref{eq:6.1.3}), (\ref{eq:6.1.5}), and (\ref{eq:6.1.7}) we conclude that the choice of $\gamma_i$ stated in the lemma satisfies the Armijo line search at iterate $i$.
\end{proof}

%%%%%%%%%%%%%%%%%%%%%%%%%%%%%%%%%%%%%%%%%%%%%%%%%%%%%%%%%%%%%%%%%%%%%%%%%%%%%%%
%%% ----------------------- MULTIPLIER STEP ANALYSIS ---------------------- %%%
%%%%%%%%%%%%%%%%%%%%%%%%%%%%%%%%%%%%%%%%%%%%%%%%%%%%%%%%%%%%%%%%%%%%%%%%%%%%%%%
\subsection{Convergence Analysis for Multiplier Step} \label{subsec:multiplier-step-analysis} 
We now provide a convergence result for the multiplier step in the LS algorithm. We note that this result is similar to Lemma 6.2 in \cite{Hager1993} and we omit the proof as it follows from the same argument provided in \cite{Hager1993} when the function $\bm{K}(\bm{\Lambda}, \bm{x})$ is replaced with $E_{m,1} (\bm{z}, \bm{\nu}, \bm{\eta})$.

\begin{lemma} \label{lem:mult-step}
Suppose that $f, \bm{h} \in \mathcal{C}^1 (\Omega)$ and that $\Omega$ is compact. Then there exists a sequence of natural numbers $\{ n_i \}_{i = 0}^{\infty}$ such that $\{ \bm{z}_{n_i} \}_{i = 0}^{\infty}$ is a convergent subsequence with limit point $\bm{z}^*$. Furthermore, if (\textbf{LICQ}) holds at $\bm{z}^*$ then there exists a vector $\bm{\nu}^*$ such that
%\begin{align}
$\min_{\bm{\eta} \geq \bm{0}} \ E_{m,1} (\bm{z}^*, \bm{\nu}^*, \bm{\eta}) = 0$.
%\end{align}
\end{lemma}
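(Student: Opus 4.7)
The plan is to combine the compactness of $\Omega$ with the geometric decrease of $E_{m,1}$ forced by the branching criterion inside LS, and then to use (\textbf{LICQ}) to extract a convergent subsequence of multipliers whose limit achieves $E_{m,1}=0$.

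First I would observe that since $\bm{z}_i \in \Omega$ for every $i$ and $\Omega$ is compact by hypothesis, the Bolzano--Weierstrass theorem guarantees the existence of an index sequence $\{n_i\}$ such that $\bm{z}_{n_i}\to\bm{z}^*$ for some $\bm{z}^*\in\Omega$. This handles the first assertion of the lemma without any use of (\textbf{LICQ}).

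For the second assertion, the key observation is that as long as Algorithm~\ref{alg:cms} does not trigger its early return in the multiplier loop, the inner test $E_{m,1}(\bm{z}_{i+1},\bm{\nu}_{i+1},\bm{\eta}_{i+1}') \leq \delta E_{m,1}(\bm{z}_i,\bm{\nu}_i,\bm{\eta}_i')$ holds at every iteration with $\delta\in(0,1)$. Iterating this bound gives $E_{m,1}(\bm{z}_i,\bm{\nu}_i,\bm{\eta}_i')\leq \delta^{\,i}\,E_{m,1}(\bm{z}_0,\bm{\nu}_0,\bm{\eta}_0')$, so along the subsequence $\{n_i\}$ one has $E_{m,1}(\bm{z}_{n_i},\bm{\nu}_{n_i},\bm{\eta}_{n_i}')\to 0$. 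In particular $\nabla_x\mathcal{L}(\bm{z}_{n_i},\bm{\nu}_{n_i},\bm{\eta}_{n_i}')\to\bm{0}$ and $\bm{\Phi}(-\bm{r}(\bm{z}_{n_i}),\bm{\eta}_{n_i}')\to\bm{0}$.

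The main obstacle is then to show that the sequences $\{\bm{\nu}_{n_i}\}$ and $\{\bm{\eta}_{n_i}'\}$ remain bounded. This is where (\textbf{LICQ}) at $\bm{z}^*$ enters: writing $\mathcal{A}=\mathcal{A}(\bm{z}^*)$, the condition $\nabla_x\mathcal{L}(\bm{z}_{n_i},\bm{\nu}_{n_i},\bm{\eta}_{n_i}')\to\bm{0}$ combined with the fact that $(\bm{\eta}_{n_i}')_j\to 0$ for $j\notin \mathcal{A}$ (forced by $\bm{\Phi}(-\bm{r}(\bm{z}_{n_i}),\bm{\eta}_{n_i}')\to\bm{0}$ and $r_j(\bm{z}^*)<0$) gives the linear system
\begin{equation*}
\begin{bmatrix}\nabla\bm{h}(\bm{z}_{n_i})^{\intercal} & \bm{A}_{\mathcal{A}}^{\intercal}\end{bmatrix}
\begin{bmatrix}\bm{\nu}_{n_i} \\ (\bm{\eta}_{n_i}')_{\mathcal{A}}\end{bmatrix}
= -\nabla f(\bm{z}_{n_i})^{\intercal} + o(1).
\end{equation*}
By the full column rank hypothesis for $i$ sufficiently large, this yields a uniform bound on the multipliers in the style of the bound in (\ref{eq:phase-one-convergence.2}). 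Passing to a further subsequence if necessary, we obtain $(\bm{\nu}_{n_i},\bm{\eta}_{n_i}')\to(\bm{\nu}^*,\bm{\eta}^*)$ with $\bm{\eta}^*\geq\bm{0}$. Continuity of $E_{m,1}$ (in each of its arguments, and in particular continuity of $\bm{\Phi}$) together with $E_{m,1}(\bm{z}_{n_i},\bm{\nu}_{n_i},\bm{\eta}_{n_i}')\to 0$ yields $E_{m,1}(\bm{z}^*,\bm{\nu}^*,\bm{\eta}^*)=0$, and since $\bm{\eta}^*\geq\bm{0}$ is feasible for the minimization problem, the minimum value is $0$, completing the proof. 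The case where the branching criterion triggers can be handled by noting that in that case NPASA exits LS after finitely many iterations, so the statement becomes vacuous along that branch.
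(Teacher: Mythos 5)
Your Bolzano--Weierstrass argument for the first assertion is fine, and the final extraction step is also sound: using (\textbf{LICQ}) to bound the multipliers via the full-column-rank system $\begin{bmatrix} \nabla \bm{h} (\bm{z}_{n_i})^{\intercal} \ \vert \ \bm{A}_{\mathcal{A}}^{\intercal} \end{bmatrix}$ is exactly the device the paper itself uses in (\ref{eq:phase-one-convergence.1})--(\ref{eq:phase-one-convergence.2}), and it is genuinely needed, since $\min_{\bm{\eta} \geq \bm{0}} E_{m,1}(\bm{z}, \bm{\nu}, \bm{\eta})$ is an infimum of continuous functions and hence only upper semicontinuous, so one cannot pass to the limit without first confining $\bm{\eta}_{n_i}'$ to a compact set. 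The forcing of $(\bm{\eta}_{n_i}')_j \to 0$ for inactive $j$ through the $\bm{\Phi}$ term is also correct.

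The genuine gap is in the middle: you obtain the crucial limit $E_{m,1}(\bm{z}_i, \bm{\nu}_i, \bm{\eta}_i') \to 0$ by assuming the $\delta$-contraction test in Algorithm~\ref{alg:cms} passes at every iteration, and this begs the question the lemma is meant to answer. The paper's proof is omitted by reference to Lemma 6.2 of \cite{Hager1993} (with $\bm{K}(\bm{\Lambda},\bm{x})$ replaced by $E_{m,1}$), and that argument concerns the bare iteration of Steps 2a--2c with \emph{no} decrease hypothesis: the vanishing of the multiplier error along a subsequence must be deduced from the structure of the steps themselves (in \cite{Hager1993}, from the first-order optimality conditions of the linearized primal subproblem together with compactness and continuity of the derivatives), not imposed. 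That this is the intended reading is made explicit by the remark immediately following the lemma, which says the lemma is what would justify redesigning the multiplier step to seek a convergent \emph{subsequence} -- i.e., to run without the linear-decrease safeguard. Moreover, under your reading the lemma is nearly contentless: if the $\delta$-test passes forever, then $E_{m,1}(\bm{z}_i, \bm{\nu}_i, \bm{\eta}_i') \leq \delta^i E_{m,1}(\bm{z}_0, \bm{\nu}_0, \bm{\eta}_0')$, so the loop's own exit test $E_{m,1} \leq \theta E_c (\bm{w})$ is met after finitely many iterations whenever $E_c(\bm{w}) > 0$; an infinite sequence $\{ \bm{z}_i \}$ could then arise only in the degenerate case $\bm{h}(\bm{w}) = \bm{0}$. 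Your closing disclaimer handles the early-return branch but not this normal termination, so the scenario your proof actually covers almost never occurs. To repair the argument you must drop the appeal to the $\delta$-test and show directly, as in \cite{Hager1993}, that the iterates generated by Steps 2a--2c drive the multiplier error to zero along a subsequence; your LICQ-based limit extraction can then be kept verbatim as the concluding step.
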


We note that Lemma~\ref{lem:mult-step} could be used to modify the multiplier step of the LS Algorithm, Algorithm~\ref{alg:cms}, so that it is designed to identify a convergent subsequence instead of a convergent sequence. However, as phase two of NPASA is developed for fast local convergence we have intentionally designed the multiplier step to continue only when the multiplier error, $E_{m,1}$, is decreasing at a linear rate or higher. This concludes our convergence analysis for phase two of NPASA.

\section{Convergence Analysis of NPASA} \label{section:npasa-global-convergence}
At this point, we have established convergence results for each problem solved in NPASA and we are ready to state and prove a global convergence result for NPASA.
\begin{theorem}[NPASA Global Convergence Theorem] \label{thm:npasa-global-conv}
Suppose that $\Omega$ is compact and that $f, \bm{h} \in \mathcal{C}^1 (\Omega)$. Suppose that NPASA (Algorithm~\ref{alg:npasa}) with $\varepsilon = 0$ generates a sequence $\{ (\bm{x}_k, \bm{\lambda}_k, \bm{\mu}_k) \}_{k=0}^{\infty}$ with $\bm{x}_k \in \Omega$ and let $\mathcal{S}_j$ be the set of indices such that if $k \in \mathcal{S}_j$ then $\bm{x}_k$ is generated in phase $j$ of NPASA, for $j \in \{ 1, 2 \}$. Suppose that the following assumptions hold:
\begin{enumerate}
\item[\emph{(H1)}] For every $k \in \mathcal{S}_1$, assumptions (G1) and (G2) in Section~\ref{section:npasa-phase-one} hold at $(\bm{x}_{k-1}, \bm{\nu}_{k-1}, q_k)$, where $\bm{\nu}_{k-1} = Proj_{[-\bar{\lambda},\bar{\lambda}]} (\bm{\lambda}_{k-1})$ and $\bar{\lambda} > 0$ is a scalar parameter.
\item[\emph{(H2)}] If $\bm{x}^*$ is a subsequential limit point of $\{ \bm{x}_k \}_{k=0}^{\infty}$ then (\textbf{LICQ}) holds at $\bm{x}^*$.
%\item[(H3)] For $k$ sufficiently large, the penalty parameter in phase one is updated using the formula $\displaystyle q_{k+1} = \max \left\{ \phi, (e_k)^{-1} \right\} q_k$, where $e_k := \min \{ E_1 (\bm{x}_k, \bm{\lambda}_k, \bm{\mu}_k), e_{k-1} \}$ and $e_0 := E_1 (\bm{x}_0, \bm{\lambda}_0, \bm{\mu}_0)$.
\end{enumerate}
Then 
\begin{align}
\liminf_{k \to \infty} E_1 (\bm{x}_k, \bm{\lambda}_k, \bm{\mu}_k) = 0. \label{result:npasa-global-conv}
\end{align}
\end{theorem}

\begin{proof}
To prove this result we consider three different scenarios. Before considering each scenario, we claim that the hypotheses of Theorem~\ref{thm:pasa-aug-lag-global-conv} are satisfied across all cases. To see this, from assumption (H1) it follows that Theorem~\ref{thm:pasa-aug-lag-mod-global-conv} holds. By Theorem~\ref{thm:pasa-aug-lag-mod-global-conv}, Algorithm~\ref{alg:gs} can always generate a point satisfying hypothesis (1) in Theorem~\ref{thm:pasa-aug-lag-global-conv}. Additionally, (2) and (3) in Theorem~\ref{thm:pasa-aug-lag-global-conv} are satisfied by the definition of Algorithm~\ref{alg:npasa}. % even with the modified penalty parameter formula in (H3). 
The final assumption in Theorem~\ref{thm:pasa-aug-lag-global-conv} follows from (H2). Hence, all of the hypotheses of Theorem~\ref{thm:pasa-aug-lag-global-conv} are satisfied. We now consider the three cases. \\
{\bfseries Case 1: $\mathcal{S}_1$ is finite.} In this case, there exists an nonnegative integer $N$ such that $k \geq N$ implies $k \in \mathcal{S}_2$. As the branching criterion for phase two to phase one in line 42 of Algorithm~\ref{alg:npasa} are never satisfied for $k > N$, we have that
%\begin{align}
$E_1 (\bm{x}_k, \bm{\lambda}_k, \bm{\mu}_k) \leq \theta E_1 (\bm{x}_{k-1}, \bm{\lambda}_{k-1}, \bm{\mu}_{k-1})$,
%\end{align}
for all $k > N$. In particular, we have 
%\begin{align}
$E_1 (\bm{x}_{N+k}, \bm{\lambda}_{N+k}, \bm{\mu}_{N+k}) \leq \theta^k E_1 (\bm{x}_{N}, \bm{\lambda}_{N}, \bm{\mu}_{N})$,
%\end{align}
for all $k \geq 0$. As $\theta \in (0, 1)$, it follows that
%\begin{align}
$\lim_{k \to \infty} E_1 (\bm{x}_k, \bm{\lambda}_k, \bm{\mu}_k) = 0$. %\label{eq:npasa-global-conv.1}
%\end{align}
\\
{\bfseries Case 2: $\mathcal{S}_2$ is finite.} In this case, there exists an nonnegative integer $N$ such that $k \geq N$ implies $k \in \mathcal{S}_1$. Now suppose phase one of NPASA is given the starting point $(\bm{x}_N, \bm{\lambda}_N, \bm{\mu} (\bm{x}_N,1))$. Then phase one of NPASA generates all remaining iterates and, as the hypotheses of Theorem~\ref{thm:pasa-aug-lag-global-conv} are satisfied, it follows that $E_1 (\bm{x}^*, \bm{\lambda}^*, \bm{\mu}(\bm{x}^*,1)) = 0$. 
%%%%%%%%%%%%%%%%%%%%%%%%%% (No longer necessary)
\iffalse
Recalling that $E_1 (\bm{x}, \bm{\lambda}, \bm{\mu}) \leq E_0 (\bm{x}, \bm{\lambda}, \bm{\mu})$ for all $\bm{x} \in \Omega$ and $\bm{\mu} \geq \bm{0}$, we have that 
\begin{align}
E_1 (\bm{x}_k, \bm{\lambda}_k, \bm{\mu}(\bm{x}_k,1)) 
\leq E_0 (\bm{x}_k, \bm{\lambda}_k, \bm{\mu}(\bm{x}_k,1)), \label{eq:npasa-global-conv.1.1}
\end{align}
for all $k \geq N$. As $\bm{x}^*$ is a limit point of a subsequence of $\{ \bm{x}_k \}_{k = 0}^{\infty}$, it follows that 
\begin{align}
\liminf_{k \to \infty} E_1 (\bm{x}_k, \bm{\lambda}_k, \bm{\mu}(\bm{x}_k,1)) 
&\leq \liminf_{k \to \infty} E_0 (\bm{x}_k, \bm{\lambda}_k, \bm{\mu}(\bm{x}_k,1))
= E_0 (\bm{x}^*, \bm{\lambda}^*, \bm{\mu}(\bm{x}^*,1)) 
= 0 \label{eq:npasa-global-conv.1.2}
\end{align}
\fi
%%%%%%%%%%%%%%%%%%%%%%%%%%%%%%%%%%%%%%%%%%%%%%%%
As $\bm{x}^*$ is a limit point of a subsequence of $\{ \bm{x}_k \}_{k = 0}^{\infty}$, it follows that 
%\begin{align}
$\liminf_{k \to \infty} E_1 (\bm{x}_k, \bm{\lambda}_k, \bm{\mu}(\bm{x}_k,1)) 
= E_1 (\bm{x}^*, \bm{\lambda}^*, \bm{\mu}(\bm{x}^*,1)) 
= 0$, 
%\label{eq:npasa-global-conv.1.2}
%\end{align}
which proves (\ref{result:npasa-global-conv}). \\
{\bfseries Case 3: $\mathcal{S}_1$ and $\mathcal{S}_2$ are infinite.} %For simplicity, let $\mathcal{S}_1 = \{ n_1, n_2, \ldots \}$ and $\mathcal{S}_2 = \{ m_1, m_2, \ldots \}$. 
Suppose that $k \in \mathcal{S}_1$. By (H1), phase one will always generate a point that satisfies the criterion for branching to phase two, that is
%\begin{align}
$E_{m,0} (\bm{x}_{k}, \bm{\lambda}_{k}, \bm{\mu}_{k}) 
\leq \theta E_c (\bm{x}_{k})$. %\label{eq:npasa-global-conv.2}
%\end{align}
Combining this with $E_{m,1} (\bm{x}_{k}, \bm{\lambda}_{k}, \bm{\mu}_{k}) \leq E_{m,0} (\bm{x}_{k}, \bm{\lambda}_{k}, \bm{\mu}_{k})$ from Lemma~\ref{lem:E0-E1-relationship} yields
%which combined with (\ref{eq:npasa-global-conv.2}) yields that 
\begin{align}
E_{m,1} (\bm{x}_{k}, \bm{\lambda}_{k}, \bm{\mu}_{k}) 
\leq \theta E_c (\bm{x}_{k}). \label{eq:npasa-global-conv.2.1}
\end{align}
Now let $\{ n_1, n_2, \ldots \} \subset \mathcal{S}_1$ be the indices such that $\bm{x}_{n_k}$ is a term in the subsequence converging to $\bm{x}^*$. From equation (\ref{eq:phase-one-convergence.3}) in the proof of Theorem~\ref{thm:pasa-aug-lag-global-conv}, there exists a constant $\Lambda$ such that 
\begin{align}
\| \bm{h} (\bm{x}_{n_k}) \| \leq \frac{\Lambda}{2 q_{n_k}}, \label{eq:npasa-global-conv.3}
\end{align}
for all $k$ sufficiently large. By the update formula for $q_k$ in line 4 of Algorithm~\ref{alg:npasa} and the fact that $\phi > 1$, we have that $q_k \to \infty$ as $k \to \infty$. Hence, for $k$ sufficiently large, $q_{n_k} \geq \Lambda$. In particular, by the update formula for $q_k$ we have that
\begin{align}
q_{n_k} \geq \frac{q_{n_{k-1}}}{e_{n_k - 1}} \geq \frac{\Lambda}{e_{n_k - 1}}, \label{eq:npasa-global-conv.4}
\end{align}
for all $k$ sufficiently large. As $E_c (\bm{x}) = \| \bm{h} (\bm{x}) \|^2$, it now follows from (\ref{eq:npasa-global-conv.2.1}), (\ref{eq:npasa-global-conv.3}), and (\ref{eq:npasa-global-conv.4}) that
\begin{align}
E_1 (\bm{x}_{n_k}, \bm{\lambda}_{n_k}, \bm{\mu}_{n_k}) 
%&= \sqrt{E_{m,1} (\bm{x}_{n_k}, \bm{\lambda}_{n_k}, \bm{\mu}_{n_k}) + E_c (\bm{x}_{n_k})} 
\leq \sqrt{(1 + \theta) E_c (\bm{x}_{n_k})}
%= \sqrt{1 + \theta} \ \| \bm{h} (\bm{x}_{n_k}) \|
\leq \sqrt{1 + \theta} \left( \frac{\Lambda}{2 q_{n_k}} \right)
\leq \left( \frac{\sqrt{1 + \theta}}{2} \right) e_{n_k - 1}. \label{eq:npasa-global-conv.5}
\end{align}
As $\displaystyle e_{n_k - 1} = \min_{0 \leq i < n_k } E_1 (\bm{x}_i, \bm{\lambda}_i, \bm{\mu}_i)$, it follows from (\ref{eq:npasa-global-conv.5}) that
\begin{align}
E_1 (\bm{x}_{n_k}, \bm{\lambda}_{n_k}, \bm{\mu}_{n_k})
\leq \left( \frac{\sqrt{1 + \theta}}{2} \right) E_1 (\bm{x}_{n_{k-1}}, \bm{\lambda}_{n_{k-1}}, \bm{\mu}_{n_{k-1}}), \label{eq:npasa-global-conv.5.1}
\end{align}
for all $k$ sufficiently large. 

On the other hand, suppose that $k \in \mathcal{S}_2$. By the branching criterion for phase two, we have that 
\begin{align}
E_1 (\bm{x}_{k}, \bm{\lambda}_{k}, \bm{\mu}_{k}) 
&\leq \theta E_1 (\bm{x}_{k - 1}, \bm{\lambda}_{k - 1}, \bm{\mu}_{k - 1}). \label{eq:npasa-global-conv.6}
\end{align}
As $\theta \in (0,1)$ by definition, we also have that $(\sqrt{1 + \theta})/2 \in (0,1)$. Thus, combining (\ref{eq:npasa-global-conv.5.1}) and (\ref{eq:npasa-global-conv.6}) yields (\ref{result:npasa-global-conv}).
\end{proof}

%Theorem~\ref{thm:npasa-global-conv} serves to illustrate a small set of assumptions under which global convergence of NPASA can be established.

\section{Conclusion} \label{section:conclusion}
In this paper, we presented a method for solving nonlinear programs, NPASA, and established global convergence properties for NPASA. In particular, under a small set of assumptions we established global convergence properties for each of the three subproblems solved during NPASA which, when combined, were able to ensure global convergence of NPASA. As noted in the introduction, a companion paper \cite{diffenderfer2020local} focuses on establishing local quadratic convergence for NPASA. In the future, we plan on implementing this approach and performing benchmarking tests to compare the NPASA algorithm to leading methods for solving nonlinear programs.

This work was performed, in part,  under the auspices of the U.S. Department of Energy by Lawrence Livermore National Laboratory under Contract DE-AC52-07NA27344.
%\input{acknowledgements}
% Add bibliography before appendix
\bibliographystyle{siamplain}
\bibliography{paper}
% Add appendix
%\clearpage
\appendix
\section{Constrained Optimization Definitions and Results} \label{appendixa}
In this appendix, we highlight key definitions and results from the constrained optimization theory that were used as assumptions in our analysis of NPASA. Statements of these results are included here for completeness but we note that more details on these theorems and proofs can be found by referencing \cite{NandW, bertsekas1995nonlinear}. As the hypotheses of many of these results are used as assumptions in establishing global and local convergence properties for NPASA, these results are only referenced in Section~\ref{subsec:notation} where we provide simplified abbreviations for the hypotheses of these theorems. In this appendix, given $i \in \mathbb{N}$ note that we will write $[i]$ to denote the set of integers $\{ 1, 2, \ldots, i \}$.

\iffalse
\begin{definition} \label{def:li}
A vector $\bm{x}$ is said to satisfy the {\bfseries linear independence condition} for problem (\ref{prob:main-nlp}) if the matrix {\footnotesize $\begin{bmatrix} \nabla \bm{h} (\bm{x}) \\ \bm{A} \end{bmatrix}$} is of full row rank.
\end{definition}
\fi

\begin{definition} \label{def:licq}
A vector $\bm{x}$ is said to satisfy the {\bfseries linear independence constraint qualification condition} for problem (\ref{prob:main-nlp}) if the matrix {\footnotesize $\displaystyle \begin{bmatrix} \nabla \bm{h} (\bm{x}) \\ \bm{A}_{\mathcal{A}(\bm{x})} \end{bmatrix}$} is of full row rank.
\end{definition}

%Note that the condition in Definition~\ref{def:li} is intentionally stronger than the classical condition in Definition~\ref{def:licq}. 
As in Section~\ref{subsec:notation}, note that %if $\bm{x}$ satisfies the linear independence condition then we abbreviate this by writing (\textbf{LI}) holds at $\bm{x}$ and 
if $\bm{x}$ satisfies the linear independence constraint qualification condition then we abbreviate this by writing \emph{(\textbf{LICQ}) holds at $\bm{x}$}. We now state the first order optimality or KKT conditions.

\begin{theorem}[Karush-Kuhn-Tucker Conditions \cite{NandW}] \label{thm:kkt}
Suppose $\bm{x}^*$ is a local solution of problem (\ref{prob:main-nlp}), that $f, \bm{h} \in \mathcal{C}^1$, and (\textbf{LICQ}) holds at $\bm{x}^*$. Then there exist KKT multipliers $\bm{\lambda}^*$ and $\bm{\mu}^*$ such that the following conditions hold at $(\bm{x}^*, \bm{\lambda}^*, \bm{\mu}^*)$:
\begin{enumerate}[leftmargin=2\parindent,align=left,labelwidth=\parindent,labelsep=7pt]
	\item[(\textbf{KKT.1})] Gradient of Lagrangian equals zero: $\nabla_x \mathcal{L} (\bm{x}^*, \bm{\lambda}^*, \bm{\mu}^*)^{\intercal} = \bm{0}$
	\item[(\textbf{KKT.2})] Satisfies equality constraints: $\bm{h} (\bm{x}^*) = \bm{0}$
	\item[(\textbf{KKT.3})] Satisfies inequality constraints: $\bm{r} (\bm{x}^*) \leq \bm{0}$
	\item[(\textbf{KKT.4})] Nonnegativity of inequality multipliers: $\bm{\mu}^* \geq \bm{0}$
	\item[(\textbf{KKT.5})] Complementary slackness: $r_i (\bm{x}^*) \mu_i^* = 0$ for $i \in [m]$.
\end{enumerate}
\end{theorem}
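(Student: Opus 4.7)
The plan is to follow the classical derivation of the first-order necessary conditions via the linearized tangent cone and Farkas' lemma. Feasibility of $\bm{x}^*$ immediately yields (\textbf{KKT.2}) and (\textbf{KKT.3}), so the work is to produce multipliers $\bm{\lambda}^* \in \mathbb{R}^{\ell}$ and $\bm{\mu}^* \in \mathbb{R}^m$ that satisfy (\textbf{KKT.1}), (\textbf{KKT.4}), and (\textbf{KKT.5}) simultaneously.

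First I would introduce the linearized cone of feasible directions at $\bm{x}^*$, namely $\mathcal{F}(\bm{x}^*) = \{ \bm{d} \in \mathbb{R}^n : \nabla \bm{h}(\bm{x}^*) \bm{d} = \bm{0}, \ \bm{a}_i^{\intercal} \bm{d} \leq 0 \ \text{for all} \ i \in \mathcal{A}(\bm{x}^*) \}$, and argue that every $\bm{d} \in \mathcal{F}(\bm{x}^*)$ is the velocity at $t = 0$ of a $\mathcal{C}^1$ feasible curve through $\bm{x}^*$. Fix such a $\bm{d}$. Under (\textbf{LICQ}), the matrix $\begin{bmatrix} \nabla \bm{h}(\bm{x}^*) \\ \bm{A}_{\mathcal{A}(\bm{x}^*)} \end{bmatrix}$ has full row rank, so the implicit function theorem can be applied to the square nonlinear system formed by the equality constraints $\bm{h}(\bm{x}) = \bm{0}$ together with the affine equalities $\bm{a}_i^{\intercal}(\bm{x} - \bm{x}^*) = t\, \bm{a}_i^{\intercal} \bm{d}$ for $i \in \mathcal{A}(\bm{x}^*)$, after augmenting with a complementary set of coordinate equations that parametrize the remaining degrees of freedom along $\bm{d}$. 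This produces a curve $\bm{x}(t)$ on $[0, \varepsilon)$ with $\bm{x}(0) = \bm{x}^*$ and $\dot{\bm{x}}(0) = \bm{d}$ satisfying $\bm{h}(\bm{x}(t)) = \bm{0}$ and $\bm{a}_i^{\intercal} \bm{x}(t) - b_i \leq 0$ for $i \in \mathcal{A}(\bm{x}^*)$ (with equality when $\bm{a}_i^{\intercal} \bm{d} = 0$); inactive rows $i \notin \mathcal{A}(\bm{x}^*)$ remain satisfied by continuity since $\bm{a}_i^{\intercal} \bm{x}^* - b_i < 0$. Hence $\bm{x}(t) \in \Omega$ for small $t$, and local minimality of $\bm{x}^*$ gives $f(\bm{x}(t)) \geq f(\bm{x}^*)$; differentiating at $t = 0$ produces $\nabla f(\bm{x}^*) \bm{d} \geq 0$.

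With this no-descent condition established on the entire polyhedral cone $\mathcal{F}(\bm{x}^*)$, the existence of multipliers follows directly from Farkas' lemma applied to the system $\nabla \bm{h}(\bm{x}^*) \bm{d} = \bm{0}$, $\bm{a}_i^{\intercal} \bm{d} \leq 0$ for $i \in \mathcal{A}(\bm{x}^*)$, and $-\nabla f(\bm{x}^*) \bm{d} \leq 0$. The lemma yields $\bm{\lambda}^* \in \mathbb{R}^{\ell}$ and nonnegative scalars $\mu_i^* \geq 0$ for $i \in \mathcal{A}(\bm{x}^*)$ such that $\nabla f(\bm{x}^*)^{\intercal} + \nabla \bm{h}(\bm{x}^*)^{\intercal} \bm{\lambda}^* + \sum_{i \in \mathcal{A}(\bm{x}^*)} \mu_i^* \bm{a}_i = \bm{0}$. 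Extending $\bm{\mu}^*$ to $\mathbb{R}^m$ by setting $\mu_i^* = 0$ for $i \notin \mathcal{A}(\bm{x}^*)$ delivers (\textbf{KKT.1}) and (\textbf{KKT.4}), and also (\textbf{KKT.5}) since $\mu_i^* r_i(\bm{x}^*) = 0$ holds trivially on both index sets: $r_i(\bm{x}^*) = 0$ on $\mathcal{A}(\bm{x}^*)$ and $\mu_i^* = 0$ off it.

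The main obstacle is the tangent-curve construction in the second paragraph: one has to set up a square nonlinear system whose Jacobian at $\bm{x}^*$ is invertible so that the implicit function theorem applies, and then verify that the resulting $\mathcal{C}^1$ curve indeed has velocity exactly $\bm{d}$ and preserves feasibility with respect to every inequality constraint for small $t$. The full-row-rank Jacobian guaranteed by (\textbf{LICQ}) is precisely what permits completing the active-constraint block to an invertible square matrix; once this geometric step is secured, the reduction via Farkas' lemma is a standard polyhedral duality argument.
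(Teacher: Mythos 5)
Your proof is correct and follows the canonical route: under (\textbf{LICQ}) the implicit function theorem turns every direction $\bm{d}$ in the linearized cone into a feasible $\mathcal{C}^1$ curve (the curve construction is only genuinely needed for the nonlinear equalities $\bm{h}$, since $\bm{r}$ is affine and the active rows stay satisfied exactly as you note), local optimality then gives $\nabla f(\bm{x}^*) \bm{d} \geq 0$ on that cone, and Farkas' lemma delivers $\bm{\lambda}^*$ and $\mu_i^* \geq 0$ on the active set, with the inactive components of $\bm{\mu}^*$ set to zero so that (\textbf{KKT.1}), (\textbf{KKT.4}), and (\textbf{KKT.5}) all hold. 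The paper supplies no proof of Theorem~\ref{thm:kkt} at all --- it is quoted directly from \cite{NandW} --- and the argument given there (tangent cone equals linearized cone under LICQ via exactly your implicit-function-theorem system, followed by the Farkas/polyhedral duality step) is essentially the one you wrote, so your proposal agrees with the paper's intended source proof.
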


%Before stating second order optimality conditions we first provide several definitions required for the statements of these theorems.

\begin{definition} \label{def:scs}
A point $(\bm{x}, \bm{\lambda}, \bm{\mu})$ is said to satisfy {\bfseries strict complementary slackness} for problem (\ref{prob:main-nlp}) if it satisfies (\textbf{KKT.1}) -- (\textbf{KKT.5}) and exactly one of $r_i (\bm{x})$ and $\mu_i$ is zero for each $i \in [m]$.
\end{definition}

\clearpage

\end{document}